\documentclass[10pt,reqno]{amsart}

\usepackage{amsmath}
\usepackage{amsfonts}
\usepackage{amssymb}
\usepackage{amsthm}
\usepackage{mathtools}

\usepackage{stmaryrd}

\usepackage{mathrsfs}
\usepackage{latexsym}
\usepackage{verbatim} 
\numberwithin{equation}{section}
\usepackage{enumitem}

\usepackage{graphicx}
\usepackage{subfigure}
\usepackage{epstopdf}

\usepackage{empheq}

\usepackage{ifthen} 

\provideboolean{shownotes} 
\setboolean{shownotes}{true} 
\newcommand{\margnote}[1]{
\ifthenelse{\boolean{shownotes}}%
{\marginpar{\raggedright\tiny\texttt{#1}}}%
{}%
}
\newcommand{\hole}[1]{
\ifthenelse{\boolean{shownotes}}%
{\begin{center} \fbox{ \rule {.25cm}{0cm}
\rule[-.1cm]{0cm}{.4cm} \parbox{.85\textwidth}{\begin{center}
\texttt{#1}\end{center}} \rule {.25cm}{0cm}}\end{center}}
{}
}

\theoremstyle{plain}

\newtheorem{lemma}{Lemma}[section]
\newtheorem{theorem}[lemma]{Theorem}
\newtheorem{proposition}[lemma]{Proposition}
\newtheorem{corollary}[lemma]{Corollary}

\theoremstyle{definition}

\newtheorem{remark}[lemma]{Remark}
\newtheorem{definition}[lemma]{Definition}

\theoremstyle{remark}

\usepackage{cite} 

\usepackage[colorlinks=true,urlcolor=blue,
citecolor=red,linkcolor=blue,linktocpage,pdfpagelabels,
bookmarksnumbered,bookmarksopen]{hyperref}

\usepackage{orcidlink}

\usepackage{cleveref}
\usepackage[pagewise,mathlines]{lineno}

\newcommand{\bm}{\mathbf{m}}
\newcommand{\bH}{\mathbf{H}}
\newcommand{\bh}{\mathbf{h}}

\DeclareFontFamily{U}{mathx}{}
\DeclareFontShape{U}{mathx}{m}{n}{<-> mathx10}{}
\DeclareSymbolFont{mathx}{U}{mathx}{m}{n}
\DeclareMathAccent{\widehat}{0}{mathx}{"70}
\DeclareMathAccent{\widecheck}{0}{mathx}{"71}

\newcommand{\llb}{\llbracket}
\newcommand{\rrb}{\rrbracket}

\newcommand{\E}{\mathbb{E}}
\newcommand{\R}{\mathbb{R}}
\newcommand{\C}{\mathbb{C}}
\newcommand{\Z}{\mathbb{Z}}
\newcommand{\N}{\mathbb{N}}

\newcommand{\vep}{\varepsilon}

\newcommand{\cT}{{\mathcal{T}}}
\newcommand{\cE}{{\mathcal{E}}}
\newcommand{\cL}{{\mathcal{L}}}

\newcommand{\cQ}{{\mathcal{Q}}}
\newcommand{\cR}{{\mathcal{R}}}
\newcommand{\cS}{{\mathcal{S}}}

\newcommand{\cP}{{\mathcal{P}}}

\newcommand{\cA}{{\mathcal{A}}}
\newcommand{\cJ}{{\mathcal{J}}}
\newcommand{\cB}{{\mathcal{B}}}
\newcommand{\cU}{{\mathcal{U}}}
\newcommand{\cM}{{\mathcal{M}}}

\newcommand{\ccC}{\mathscr{C}}
\newcommand{\ccB}{\mathscr{B}}
\newcommand{\ccL}{\mathscr{L}}
\newcommand{\cG}{\mathcal{G}}

\newcommand{\tcS}{\widetilde{{\mathcal{S}}}}

\renewcommand{\Re}{\mathrm{Re}\,} 
\renewcommand{\Im}{\mathrm{Im}\,}

\newcommand{\skalprod}[2]{\left \langle #1 \, , #2 \right \rangle}
\newcommand{\norm}[1]{\left \| #1 \right \|}
\newcommand{\abs}[1]{\left | #1 \right |}

\DeclareMathOperator*{\rank}{\mathrm{rank}}

\DeclareMathOperator*{\Hess}{\mathrm{Hess}\,}

\usepackage{scalerel}

\newcommand{\brt}{\bar{\theta}}

\newcommand{\varep}{\varepsilon}

\newcommand{\ptsp}{\sigma_\mathrm{\tiny{pt}}}

\newcommand{\bb}[1]{\mathbb{#1}}

\newcommand{\pld}[2]{\left \langle #1 \, , #2 \right \rangle_{L^2}}

\newcommand{\nld}[1]{\left \| #1 \right \|_{L^2}}

\DeclareMathOperator{\ran}{Ran}

\newcommand{\ShowColoredChanges}{true} 
\ifthenelse{\isundefined{\ShowColoredChanges}}
  {

  }
  {

  }

\begin{document}

\title[Time-periodic oscillating N\'eel walls]{Time-periodic oscillating N\'eel walls in ferromagnetic thin films}

\author[A. Capella]{Antonio Capella \orcidlink{0000-0002-5397-6440}}

\address{{\rm (A. Capella)} Instituto de Matem\'aticas\\Universidad Nacional Aut\'onoma de M\'exico\\Circuito Exterior s/n, Ciudad Universitaria\\C.P. 04510 Cd. de M\'{e}xico (Mexico)}

\email{capella@matem.unam.mx}

\author[V. Linse]{Valentin Linse \orcidlink{0009-0001-5050-7227}}

\address{{\rm (V. Linse)} 
Lehrstuhl f\"ur Angewandte Analysis \\ RWTH Aachen\\D-52056 Aachen (Germany)}

\email{linse@math1.rwth-aachen.de}

\author[C. Melcher]{Christof Melcher \orcidlink{0000-0002-0535-2449}}

\address{{\rm (C. Melcher)} 
Lehrstuhl f\"ur Angewandte Analysis \\ RWTH Aachen\\D-52056 Aachen (Germany)}

\email{melcher@rwth-aachen.de}

\author[L. Morales]{Lauro Morales \orcidlink{0000-0003-3294-755X}}

\address{{\rm (L. Morales)} Instituto de Investigaciones en Matem\'aticas Aplicadas y en Sistemas\\Universidad Nacional Aut\'onoma de M\'exico\\Circuito Escolar s/n, Ciudad Universitaria\\C.P. 04510 Cd. de M\'{e}xico (Mexico)}
\curraddr{\textsc{Departamento de Matem\'aticas\\Universidad Aut\'onoma Metropolitana\\Unidad Iztapalapa\\Av. San Rafael Atlixco 186, Col. Vicentina, C.P. 09340\\Cd. de M\'{e}xico (Mexico)}}

\email{lauro\_mm@xanum.uam.mx}

\author[R. G. Plaza]{Ram\'on G. Plaza \orcidlink{0000-0001-8293-0006}}

\address{{\rm (R. G. Plaza)} Instituto de Investigaciones en Matem\'aticas Aplicadas y en Sistemas\\Universidad Nacional Aut\'onoma de M\'exico\\Circuito Escolar s/n, Ciudad Universitaria\\C.P. 04510 Cd. de M\'{e}xico (Mexico)}

\email{plaza@aries.iimas.unam.mx}

\begin{abstract}
This paper studies the existence, the structure and the spectral stability of time-periodic oscillating 180-degree N\'eel walls in ferromagnetic thin films. It is proved that time-periodic coherent structures do exist as solutions to the reduced model for the in-plane magnetization proposed by Capella, Melcher, and Otto (Nonlinearity 20 (2007), no.~11, 2519--2537) when a weak and $T$-periodic external magnetic field is applied in the direction of the easy axes of the film, perturbing in this fashion the well-known static 180-degree N\'{e}el wall. The linearization around this time-periodic N\'eel wall is constituted by a family of linear operators, parametrized by the time variable, which generates an evolution system of generators (or propagator) for the linear problem. Profiting from the stability of the static N\'eel wall, it is shown that the Floquet spectrum of the monodromy map for the propagator is contained in the complex unit circle, proving stability of the oscillating solution at least at a linear level.
\end{abstract}

\keywords{N\'eel walls, ferromagnetic thin films, spectral stability, resolvent estimates.}

\subjclass[2020]{35B35, 35Q60, 35C07, 82D40, 78M22, 47A10.}

\maketitle
\setcounter{tocdepth}{1}



\section{Introduction}
\label{secintro}

A ferromagnetic thin film is a layer of magnetic material with microscopic thickness, typically less than 50 nm, that exhibits spontaneous magnetization. These films maintain magnetic order in the absence of an external field and are crucial for applications like spintronics, data storage, and magnetic sensors. The evolution of the magnetization inside a ferromagnetic material is described by the Landau-Lifshitz-Gilbert (LLG) model (cf. \cite{LanLif35,Gilb55}), which relates the observed magnetization patterns to the result of minimizing a micromagnetic energy functional, and it is posed in terms of a damped gyromagnetic precession of a (unit) magnetization vector field. The term \emph{domain wall} refers to a narrow transition region between opposite magnetization vectors inside a ferromagnet (cf. Hubert and Sch\"afer \cite{HuSch98}) and the study of their dynamics constitute one of the most fundamental topics in the theory of ferromagnetic materials. N\'eel \cite{Neel55} observed that, when the thickness of a certain ferromagnet becomes sufficiently small with respect to the exchange characteristic length (that is, in the ferromagnetic thin film limit), it then becomes energetically favorable for the magnetization to rotate in the thin film plane, giving rise to a \emph{N\'eel wall} separating two opposite magnetization regions by an in-plane rotation oriented along an axis. In other words, the normal component of the magnetization of the material is penalized by the geometry in such a way that this component must vanish as the height of the sample does (see \cite{CMO07,DKO06,GC04,Melc10,Melc03} for further details).

Capella, Melcher and Otto \cite{CMO07} proposed an effective one-dimensional thin film reduction of the micromagnetic energy, resulting into a thin-film layer equation for the in-plane magnetization's phase. The effective equations encompass a wave-type dynamics for the N\'eel wall's phase. The authors in \cite{CMO07} proved the existence of a static wave profile solution to the resulting model in the absence of an external magnetic field describing a 180-degree N\'eel wall for the in-plane magnetization. Their model, which can be extended in order to consider external applied magnetic fields, effectively describes the dynamics of N\'eel walls in the thin film limit (see also Chermisi and Muratov \cite{ChMu13} and Muratov and Yan \cite{MuYa16} for further information). In a recent  contribution, Capella \emph{et al.} \cite{CMMP24} proved that this static N\'eel wall's phase is nonlinearly stable under small perturbations. The analysis is based on a detailed spectral study of the linearized operator around the static profile and the generation of an exponentially decaying semigroup outside a one-dimensional space associated to the translation eigenfunction.

This paper is devoted to the following question: if an external, time-periodic magnetic field is applied in the direction of the easy axes of the film, what happens to the static N\'eel wall? In order to provide an answer, we assume that the applied magnetic field is sufficiently weak, that is, there exists a physical parameter measuring the intensity of the external magnetic field and which plays the role of a perturbation variable. We show that when the intensity of the magnetic field is small enough, there exists a \emph{time-periodic oscillating N\'eel wall}, that is, a solution to the same effective in-plane magnetization model which is time-periodic, with the same fundamental period as the external magnetic field, and which oscillates in the vicinity of the static N\'eel wall. More precisely, it keeps the shape of the static N\'eel wall plus a small correction. In order to show that the constructed time-periodic solution is physically plausible, we also perform a stability analysis. Upon linearization of the model equations around this oscillating N\'eel wall, we prove that the resulting linearized operator (defined on the appropriate energy spaces) and endowed with time-periodic coefficients, generates an evolution system or propagator that encodes the dynamics of the solutions to the linear problem. We prove that the Floquet spectrum, defined as the spectrum of the monodromy map operator of the evolution system after a fundamental period, is confined to the unit circle in the complex plane. This shows the spectral stability property for the time-oscillating solution, a fundamental physical property and a necessary condition for the persistence of the coherent structure under small perturbations.

To sum up, we prove that time-periodic oscillating N\'eel walls in the thin film limit appear when a periodic external magnetic field is applied. This solution is periodic in time with the same fundamental period as the period of the applied magnetic field and keeps the shape of the static N\'eel wall up to an error term. Moreover, it is spectrally stable under small perturbations in an appropriate energy space. The result is new in the thin film limit. Up to our knowledge, there is only one result addressing the existence of time-periodic N\'eel walls: Huber \cite{Hub11} proved the existence of time-periodic N\'eel wall motions for a layer of small (but positive) thickness, a physical feature that makes the problem mathematically different, because in the case of layers with positive thickness the linearized operators around domain walls are local and sectorial, in contrast with the present case of a thin-film limit.

\subsection*{Plan of the paper}

The paper is structured as follows. The preliminary Section \ref{secNeelthin} contains a description of the reduced thin-film model, of the static N\'eel wall solution and of its main spectral properties. Section \ref{seconw} is devoted to proving that time-periodic oscillating N\'eel wall solutions do exist when a periodic weak external magnetic field is applied. Section \ref{secspectral} focuses on the linearization around the constructed time-periodic solution and on the generation of an evolution system. Finally, in Section \ref{secspecstab} it is proved that the Floquet spectrum of the linearization around the time-periodic N\'eel wall is stable.

\section{N\'{e}el walls in ferromagnetic thin films}
\label{secNeelthin}

\subsection{Notation and preliminaries}
\label{secprel}

The set of unitary vectors in $\R^n$ is denoted by $\mathbb{S}^{n-1}$ and we write $\N_0 = \N \cup \{ 0 \}$. We denote the real and imaginary parts of a complex number $\lambda$ by $\Re\lambda$ and $\Im\lambda$, respectively. For any scalar or complex-valued function, the operation $(\cdot)^*$  denotes  complex conjugation. We denote the Lebesgue and Sobolev spaces  $L^2(\R, \C)$ and $H^k(\R, \C)$, $k \in \N$, of complex-valued functions on the real line as $L^2$ and $H^k$, respectively. Their real-valued counterparts are denoted as $L^2(\R)$ and $H^k(\R)$. The operators $\widehat{\cdot}:L^2\to L^2$ and $\widecheck{\cdot}:L^2\to L^2$ stand for the Fourier transform and its inverse, respectively, and the half-Laplacian is defined by the relation $(-\Delta)^{1/2}u = (|\xi|\widehat{u})\,\widecheck{}$. The symbol ``$\lesssim$'' means ``$\leq$'' times a harmless uniform positive constant $C > 0$, i.e., $f \lesssim g$ if and only if $f \leq C g$.

Linear operators acting on infinite-dimensional spaces are indicated with calligraphic letters (e.g., $\cL$), except for the identity operator which is indicated by $I$. For given Banach spaces $X$ and $Y$, the spaces of linear, bounded and closed operators from $X$ to $Y$ are denoted as $\ccL(X,Y)$, $\ccB(X,Y)$ and $\ccC(X,Y)$, respectively. The domain of a linear operator, $\cL : X \to Y$, is denoted as $D(\cL) \subseteq X$ and its range is $\text{Ran}(\cL) \subset Y$. For any linear, closed and densely defined operator operator $\cL : D(\cL) \subset X \to Y$, the resolvent set, $\rho(\cL)$, is  defined as the set of complex numbers $\lambda\in \C$ such that $\cL -\lambda$ is injective and onto, and $(\cL - \lambda)^{-1}$ is a bounded operator. The spectrum of $\cL$ is the complex complement of the resolvent, $\sigma(\cL)=\C\setminus \rho(\cL)$. Finally, if $X$ is a Banach space and $\cA$, $\cT$ are two operators in $X$ such that $D(\cA) = D(\cT)$  then, the commutator $\llb \cA, \cT \rrb:D(\cA)\subset X\to X$ is given by the difference $\llb \cA, \cT \rrb := \cA \cT- \cT\cA $. 

\subsection{Micromagnetics, N\'{e}el walls and the thin film limit}
\label{secLLG}

The dynamics of the magnetization distribution in a ferromagnetic body, $\widetilde{\Omega} \subset \R^3$, is determined by the Landau-Lifshitz-Gilbert (LLG) model (cf. \cite{LanLif35,Gilb55}):
\begin{equation}
\label{eqLLG}
\bm_t+ \bm \times (\gamma \bH_{\mathrm{eff}} - \alpha \bm_t) = 0.
\end{equation}
Here $\bm : \widetilde{\Omega} \times (0,\infty) \to \mathbb{S}^2 \subset \R^3$ is the magnetization vector field, $\alpha > 0$ is a non-dimensional damping coefficient known as the Gilbert factor, $\gamma > 0$ is the constant value of the gyromagnetic ratio with dimensions of frequency (cf. Gilbert \cite{Gilb04}) and $\bH_{\mathrm{eff}} = \bh - \nabla \E(\bm)$ is the effective magnetic field, that is, the net magnetic field acting on the magnetization $\bm$. It consists of the applied field $\bh$ and the negative functional gradient of the micromagnetic interaction energy $\E(\bm)$ which, in the absence of external fields, is given by
\begin{equation}
\label{energyint}
 \E(\bm) = \frac{1}{2}\Big( d^2 \int_{\widetilde{\Omega}} |\nabla \bm|^2 \, dx + \int_{\R^3} |\nabla U|^2 + Q \int_{\widetilde{\Omega}} \Phi(\bm) \, dx \Big).
\end{equation}
The constant parameter $d> 0$ is the exchange length and the stray field, $\nabla U$, is defined uniquely via $\Delta U = \textrm{div}\,(\bm \chi_{\widetilde{\Omega}})$, where $\chi_G$ denotes the indicator function of the set $G$. Crystalline anisotropies are encoded in the last integral of \eqref{energyint} through a penalty energy density $\Phi$ which usually has the form of an even polynomial in $\bm \in \mathbb{S}^2$. The constant $Q>0$ measures the relative strength of anisotropy penalization against stray-field interaction.

We are interested in the thin-film regime when $\widetilde{\Omega} = \Omega \times (0,\delta)$ with $\Omega \subset \R^2$ and $0 < \delta \ll d$. In this case it can be assumed that the magnetization is independent of $x_3$ and $\ell$-periodic in the ${\bf e}_2$ direction (see, e.g., Garc\'{\i}a-Cervera \cite{GC04}, Melcher \cite{Melc10} and De Simone \emph{et al.} \cite{DKO06}). This yields,
\[
{\bf m}(x_1,x_2+\ell) = \bm(x_1,x_2), \quad\text{for any } x = (x_1,x_2)\in\R^2.
\]
Since the material exhibits uniaxial anisotropy in the ${\bf e}_2$ direction with $\Phi({\bf m}) = 1- m_2^2$, it can be shown that, under the appropriate scalings, ${\bf m} = (m,0)$ with $m = (m_1,m_2)$ is a solution to the following variational problem in one space dimension,
 \begin{equation}
 \label{limit-varppio}
 \begin{aligned}
E_0 (m) &= \tfrac{1}{2} \left({\widetilde{Q}} 
\Vert m'\Vert_{L^2(\R)}^2 + \Vert m_1 \Vert_{\dot{H}^{1/2}(\R)}^2 + \Vert m_1\Vert_{L^2(\R)}^2    \right) \to \min,\\ 
 &m : \R \to {\mathbb S}^1, \qquad \text{with} \;\;  
 m(\pm\infty) =(0,\pm 1),
 \end{aligned}
\end{equation}
where $' = d/d x_1$ and $\widetilde{Q} > 0$ is a rescaled constant depending on $Q$. It is to be observed that $E_0(m)$ is a strictly convex functional on $m_1$ inasmuch as $|m'|^2=(m_1')^2/(1-m_1^2)$. Thus, the variational problem \eqref{limit-varppio} has a minimizer for any $\widetilde{Q}>0$. Since the left translation operator is an isometry in $L^2$, the expression of $E_0(m)$ is invariant under spatial translations and this invariance is inherited by the energy, yielding that minimizers of \eqref{limit-varppio} are unique up to translations. This property of the energy and its minimizres is known as \emph{translation invariance}. The minimizer that satisfies $m_1(0)=1$ is called the {\em N\'eel wall profile}. We refer to $E_0(m)$ as the N\'eel wall energy.

Capella \emph{et al.} \cite{CMO07} proved that, in the thin film limit regime, the minimizers of \eqref{limit-varppio} are solutions to an effective geometric nonlinear wave equation for the in-plane magnetization $m = (m_1, m_2)$, which reads
\begin{equation}
\label{nlwm}
\begin{aligned}
\big[ \partial_t^2 m + \nu \partial_t m + \nabla E_0(m) \big] &\perp T_m \mathbb{S}^1,\\
m : \R \times (0,\infty) &\to\mathbb{S}^1,\\
m(\pm \infty, t) &= (0, \pm 1).
\end{aligned}
\end{equation}
In this fashion, the in-plane magnetization is completely determined by its phase, $\theta \in (-\pi/2,\pi/2)$, through the relation $m = (m_1, m_2) = (\cos \theta, \sin \theta)$. The variational problem that defines a N\'eel wall is recast as the following variational problem for the phase,
\begin{equation}
 \label{varprob}
 \begin{aligned}
 \mathcal{E}(\theta) &= \tfrac{1}{2} \big( \|\theta'\|_{L^2}^2 + \|\cos \theta\|^2_{{\dot H}^{1/2}} + \|\cos \theta\|_{L^2}^2 \big) \; \rightarrow \;  \min\\ 
 \theta : \R &\to (-\pi/2,\pi/2), \qquad \text{with} \;\; \theta(\pm\infty) = \pm \pi/2
 \end{aligned}
\end{equation}
(for details, see Capella \emph{et al.} \cite{CMO07}). Notice that the energy $\mathcal{E}(\theta)$ is exactly the same energy $E_0$ appearing in \eqref{limit-varppio}, but recast as a function of the phase, $\mathcal{E}(\theta) = E_0(\cos \theta, \sin \theta)$. For concreteness and without loss of generality, we have assumed that $\widetilde{Q} \equiv 1$. We keep such normalization for the rest of the paper. It can be proved that the phase itself also exhibits some wave-type dynamics and it is a solution to the following nonlinear wave-type limit equation,
\begin{equation}
 \label{reddyneqhom}
 \left\{ \ \ 
\begin{aligned}
&\partial_t^2 \theta + \nu \partial_t \theta + \nabla \cE(\theta) = 0, \\
&\theta(-\infty,t) =-\pi/2,\quad \theta(\infty,t) =\pi/2.
\end{aligned}
\right.
\end{equation}

The magnetization and its phase are determined by the presence of an applied external magnetic field. In the case where this external field points towards one of the end-states determined by the anisotropy, $\mathbf{h} = H \mathbf{e}_2$ (or in other words, when it is applied in the direction of the easy axes of the film), then it can be proved (cf. \cite{CMO07}) that the dynamical equation for the phase $\theta$ is given by
\begin{equation}
 \label{reddyneq}
 \left\{ \ \ 
\begin{aligned}
&\partial_t^2 \theta + \nu \partial_t \theta + \nabla \cE(\theta) = H\cos\theta, \\
&\theta(-\infty,t) =-\pi/2,\quad \theta(\infty,t) =\pi/2,
\end{aligned}
\right.
\end{equation}
where $H \in \R$ measures the external magnetic field strength and  $\cE(\theta)$ is the effective energy appearing in \eqref{varprob}. In this paper we are interested in the case when the applied external magnetic field is a periodic function of time, $H = H(t) = H(t + T)$ for all $t$ and for some fundamental period $T > 0$.

Equations \eqref{varprob} and \eqref{reddyneq}, which were rigorously derived from electromagnetic theory and the LLG equation by Capella, Melcher and Otto \cite{CMO07}, constitute an effective model that describes the dynamics of the magnetization in a ferromagnetic thin film. 

\subsection{The static N\'{e}el wall and its stability properties}
\label{secstatic}

In the absence of an external magnetic field, Capella \emph{et al.} \cite{CMO07} proved the existence of a unique odd, monotone increasing and smooth steady state, 
\begin{equation}
\label{staticNeelprof}
\theta_0 = \theta_0(x),
\end{equation}
such that $\partial_x \theta_0 > 0$, $\theta_0(\pm \infty) = \pm \tfrac{\pi}{2}$, known as the \emph{static N\'eel wall's phase profile}. This solution is unique up to translations and it is a minimizer of the variational problem \eqref{varprob}. 
%
%
%
%
The main structural properties of the static N\'eel wall can be summarized as follows.
\begin{proposition}[properties of the static N\'eel wall's phase \cite{CMMP24,CMO07,Melc03}]
\label{propNeelw}
 There exists a static N\'eel wall solution with phase $\theta_0 = \theta_0(x)$, $\theta_0 : \R \to (-\pi/2,\pi/2)$, satisfying the following:
 \begin{itemize}
  \item[\rm{(a)}] 
  \label{propa} $\theta_0$ is a strict minimizer of the variational problem \eqref{varprob}, with center at the origin, $\theta_0(0) = 0$, and monotone increasing, $\partial_x \theta_0 (x) > 0$ $\, \forall x \in \R$.
  \item[\rm{(b)}]\label{propb} $\theta_0$ is a smooth solution to
  \begin{equation}\label{ELeq}
   \partial_x^2 \theta + \sin \theta (1+(-\Delta)^{1/2}) \cos \theta = 0,
  \end{equation}
which is the Euler-Lagrange equation for the variational problem \eqref{varprob}.
\item[\rm{(c)}]\label{propc} $\partial_x \theta_0  \in H^k(\R)$ for any $k \in \Z$, $k \geq 0$.
\item[\rm{(d})]\label{propd} For every $u \in H^1$ such that $u(0) = 0$ there holds
\begin{equation}
 \label{Hesspos}
 \Hess \mathcal{E}(\theta_0) \langle u,u \rangle_{L^2} \geq \|u \, \partial_x \theta_0\|^2_{L^2} + \Re \, b [u\sin \theta_0, u \sin \theta_0],
\end{equation}
where the bilinear form $b[\cdot,\cdot] : H^1 \times H^1 \to \C$, defined as,
\begin{equation}
\label{defbilinearB}
 b[f,g] = \int_\R (1+|\xi|) \hat f(\xi) \hat g(\xi)^* \, d\xi, \qquad f, g \in H^1,
\end{equation}
is equivalent to the standard inner product in $H^{1/2}$.
\item[\rm{(e)}]\label{prope} $\theta_0 \in W^{2,\infty}$.
\end{itemize}
\end{proposition}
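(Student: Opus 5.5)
Most of this proposition is imported from the cited works, so my plan is to indicate where each item comes from and check the parts that need an argument.

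\textbf{Existence and the Euler--Lagrange equation.} For existence I would use the direct method on \eqref{varprob}, written in terms of $m_1=\cos\theta$. The energy is coercive in $H^1$ on the affine class with $\theta(\pm\infty)=\pm\pi/2$. It is strictly convex in $m_1$, because $|m'|^2=(m_1')^2/(1-m_1^2)$ is jointly convex and the other two terms are quadratic. Strict convexity gives a minimizer that is unique up to translation. Symmetric decreasing rearrangement of $m_1$ does not increase any of the three terms. Hence the minimizer can be taken even in $m_1$, which makes $\theta_0$ odd and monotone. Normalizing so that $\theta_0(0)=0$ then yields (a); strict positivity $\partial_x\theta_0>0$ is handled below. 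Taking the first variation of $\cE$ in $\theta$ gives \eqref{ELeq}, since $\tfrac{d}{d\theta}\cos\theta=-\sin\theta$. This is item (b), except for smoothness.

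\textbf{Regularity, (c) and (e).} I would bootstrap. Rewrite \eqref{ELeq} as $\partial_x^2\theta_0=-\sin\theta_0\,(1+(-\Delta)^{1/2})\cos\theta_0$. Since $\cos\theta_0\in H^1$, the right-hand side lies in $L^2$, so $\partial_x\theta_0\in H^1$. Differentiating, and using that $H^k$ is an algebra for $k\ge1$, each step gains one derivative; this gives $\partial_x\theta_0\in H^k$ for every $k$. Then (e) follows because $\theta_0$ is bounded and $\partial_x\theta_0,\partial_x^2\theta_0\in H^1\subset L^\infty$.

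Strict monotonicity is the remaining part of (a). I would differentiate \eqref{ELeq} to see that $\partial_x\theta_0\ge0$ is a nonnegative kernel element of the linearized operator. The nonlocal part $(-\Delta)^{1/2}$ has a positive Lévy-type kernel off the diagonal, so a strong maximum principle forces $\partial_x\theta_0>0$ everywhere.

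\textbf{The Hessian bound (d).} This is the main obstacle. I would compute
\[
\Hess\cE(\theta_0)\langle u,u\rangle=\|\partial_x u\|^2+b[u\sin\theta_0,u\sin\theta_0]-\int u^2\cos\theta_0\,(1+(-\Delta)^{1/2})\cos\theta_0 .
\]
The idea is to use the kernel element $\partial_x\theta_0>0$ through a ground-state substitution $u=\partial_x\theta_0\, w$. This substitution needs $\partial_x\theta_0>0$, and it is valid for $u(0)=0$ after approximation.

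Integrating by parts and using the equation that $\partial_x\theta_0$ satisfies, the local part of the form becomes $\int(\partial_x\theta_0)^2|w'|^2$. The nonlocal part becomes a double integral with a positive kernel: the nonlocal analogue of the identity $\ell(\phi w,\phi w)=\tfrac12\iint K\phi(x)\phi(y)|w(x)-w(y)|^2$.

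To isolate the terms on the right of \eqref{Hesspos}, I would split the $(1+|\xi|)$ form so that $b[u\sin\theta_0,u\sin\theta_0]$ is kept and the remainder is shown to be nonnegative. I would also produce the term $\|u\,\partial_x\theta_0\|^2$ from $\|\partial_x u\|^2$, using the identity $|\partial_x(\sin\theta_0 u)|^2+|\partial_x(\cos\theta_0 u)|^2=|\partial_x u|^2+(\partial_x\theta_0)^2u^2$.

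I expect this exact bookkeeping to be delicate. The careful version is in \cite{CMMP24}, which I would follow for the nonlocal commutator estimate.
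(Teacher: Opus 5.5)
The paper proves nothing here and only cites \cite{CMO07,Melc03,CMMP24}. Your outline of (a)--(c) and (e) is a reasonable reconstruction: convexity in $m_1$ under the normalization $m_1(0)=1$, then rearrangement, then bootstrapping.

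The genuine gap is in (d), where the mechanism you propose would not work. The substitution $u=\partial_x\theta_0\,w$ uses the kernel of the full nonlocal $\cL_0$. After it, the nonlocal part does \emph{not} become a double integral with a positive kernel. With $s=\sin\theta_0$ one has $(s(x)u(x)-s(y)u(y))^2=s(x)s(y)(u(x)-u(y))^2+(\text{terms yielding a potential})$. So the effective jump kernel is proportional to $\sin\theta_0(x)\sin\theta_0(y)|x-y|^{-2}$, which is negative whenever $x,y$ lie on opposite sides of $0$. This substitution also has further defects:
\begin{itemize}
\item It re-expands exactly the term $b[u\sin\theta_0,u\sin\theta_0]$ you want to keep.
\item It gives no role to the hypothesis $u(0)=0$, since $\partial_x\theta_0>0$ everywhere.
\item The identity you quote does not help: it yields $\|\partial_x u\|^2=\|\partial_x(u\sin\theta_0)\|^2+\|\partial_x(u\cos\theta_0)\|^2-\|u\,\partial_x\theta_0\|^2$, with the desired term on the wrong side.
\end{itemize}
In fact no nonlocal or commutator estimate is needed. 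By your own Hessian formula, the $b$-term appears identically on both sides of \eqref{Hesspos}, so (d) is equivalent to the local inequality
\[
\|\partial_x u\|_{L^2}^2 \geq \int_\R \big((\partial_x\theta_0)^2+c_{\theta_0}\big)u^2\,dx,\qquad u\in H^1,\ u(0)=0 .
\]

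The missing idea is that the right ground state is $\sigma:=\sin\theta_0$, not $\partial_x\theta_0$. By \eqref{ELeq},
\[
\sigma''=-\sin\theta_0\,(\partial_x\theta_0)^2+\cos\theta_0\,\partial_x^2\theta_0=-\big((\partial_x\theta_0)^2+c_{\theta_0}\big)\sigma ,
\]
and $\sigma$ vanishes only at $x=0$. Substitute $u=\sigma w$ separately on $(0,\infty)$ and $(-\infty,0)$. The boundary term $\sigma' u^2/\sigma$ vanishes at $0$, because $u(x)=o(|x|^{1/2})$ and $\sigma(x)\sim\partial_x\theta_0(0)\,x$; at $\pm\infty$ it vanishes trivially. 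This gives the identity
\[
\Hess\cE(\theta_0)\langle u,u\rangle=\int_\R\big(\partial_x u-\cot\theta_0\,\partial_x\theta_0\,u\big)^2dx+\|u\,\partial_x\theta_0\|_{L^2}^2+b[u\sin\theta_0,u\sin\theta_0],
\]
which is (d). It is precisely the second variation of $E_0$ in the variable $m_1=\cos\theta$, i.e. the convexity of $(m_1')^2/(1-m_1^2)$ that you already invoked for existence. The condition $u(0)=0$ is what preserves the normalization $m_1(0)=1$.

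The same equation also repairs your strict-monotonicity step. There the maximum-principle claim fails as stated, for the same sign reason.
\begin{itemize}
\item For $x_0\neq 0$ you must fold. Use that $\sin\theta_0\,\partial_x\theta_0$ is odd and nonnegative on $(0,\infty)$, and that $|x_0-y|<|x_0+y|$ for $x_0,y>0$ (symmetrically for $x_0<0$).
\item At $x_0=0$ the nonlocal term is annihilated by $\sin\theta_0(0)=0$. Instead, ODE uniqueness for $\sigma''+((\partial_x\theta_0)^2+c_{\theta_0})\sigma=0$ with $\sigma(0)=0$ gives $\partial_x\theta_0(0)=\sigma'(0)\neq 0$.
\end{itemize}
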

\begin{proof}
See Lemmata 1 and 2, as well as Theorem 1 in \cite{CMO07}. See also Propositions 1 and 2 in \cite{Melc03}, and Proposition 2.1 and Corollary 2.2 in \cite{CMMP24}.
\end{proof}

\begin{remark}
This static solution $\theta_0 = \theta_0(x)$ is a minimizer of the variational problem \eqref{varprob}. Therefore, $\nabla \mathcal{E}(\theta_0) = 0$ and it can be interpreted as a \emph{stationary} traveling wave solution to \eqref{reddyneqhom}, monotone increasing and connecting $- \pi/2$ at $x=-\infty$ with $\pi/2$ at $x = \infty$. Moreover, since the energy $\mathcal{E}(\theta)$ coincides with the N\'eel wall energy $E_0(m)$, it is clear that this phase defines an in-plane magnetization, $m_0(x) = (\cos \theta_0, \sin \theta_0)$, which is a static solution to the wave-type equation \eqref{nlwm} and a minimizer of the variational problem \eqref{limit-varppio}.
\end{remark}

The main result in Capella \emph{et al.} \cite{CMMP24} is that this static N\'eel wall is nonlinearly stable with respect to small perturbations. To that end, the linearized operator around the static N\'eel wall's phase plays a fundamental role. The latter is given by (see \cite{CMO07,CMMP24})
\begin{equation}
\label{defL0}
\left\{
\begin{aligned}
\cL_0 &: L^2 \to L^2,\\
D(\cL_0) &= H^2,\\
\cL_0 u &:= - \partial^2_xu + \mathcal{S}_{\theta_0}u - c_{\theta_0} u, \qquad u \in D(\cL_0),
\end{aligned}
\right.
\end{equation}
where the non-local, linear operator $\mathcal{S}_{\theta_0}$ is defined as
\begin{equation}
\label{defS0}
\left\{
\begin{aligned}
\mathcal{S}_{\theta_0} &: L^2 \to L^2,\\
D(\mathcal{S}_{\theta_0}) &= H^1,\\
\mathcal{S}_{\theta_0} u &:= \sin \theta_0 (1+(-\Delta)^{1/2}) (u \sin \theta_0), \qquad u \in D(\mathcal{S}_{\theta_0}),
\end{aligned}
\right.
\end{equation}
and
\[
c_{\theta_0}(x) := \cos \theta_0(x) (1+(-\Delta)^{1/2}) \cos \theta_0(x),
\]
is a bounded coefficient. For simplicity we write $s_{\theta_0} = s_{\theta_0}(x)$ to denote the function $x \mapsto \sin \theta_0(x)$ for all $x \in \R$. 

It is to be observed that the operator $\cL_0$ is the linearization of $\nabla \cE$ around the static phase $\theta_0$, namely,
\[
\cL_0 = D^2 \cE(\theta_0) : L^2 \to L^2
\]
(for details, see \cite{CMO07,CMMP24}). The following Proposition summarizes its main spectral properties.

\begin{proposition}[spectral properties of $\cL_0$ \cite{CMO07,CMMP24}]
\label{propspecL0}
The operator $\cL_0 : L^2 \to L^2$ defined in \eqref{defL0} satisfies the following:
\begin{itemize}
\item[\rm{(a)}] $\cL_0$ is a closed, densely defined, self-adjoint linear operator.
\item[\rm{(b)}] There exists a positive constant $\Lambda_0 > 0$ such that
\[
\sigma(\cL_0) \subset \{0\} \cup [\Lambda_0, \infty).
\]
\item[\rm{(c)}] $\lambda = 0$ is a simple eigenvalue of $\cL_0$, associated to the eigenfunction $\partial_x \theta_0 \in H^2$.
\item[\rm{(d)}] Let $L^2_\perp$ denote the $L^2$-orthogonal complement of $\text{span} \{ \partial_x \theta_0 \}$ and let $H_\perp^k := H^k \cap L^2_\perp$, for all $k \in \N$. Then for all $u \in H_\perp^k$ there holds
\[
\langle \cL_0 u, u \rangle_{L^2} \geq \Lambda_0 \| u \|_{L^2}^2.
\]
\end{itemize}
\end{proposition}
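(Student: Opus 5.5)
The proof splits into two halves. Parts (a) and (c) are structural. Parts (b) and (d) come from the Hessian estimate of Proposition \ref{propNeelw}(d) together with a compactness argument.

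\textbf{Parts (a) and (c).} I will first show that $\cL_0$ is self-adjoint on $D(\cL_0)=H^2$. Split $\cL_0 = (-\partial_x^2 + 1) + \mathcal{B}$ with
\[
\mathcal{B} = \mathcal{S}_{\theta_0} - c_{\theta_0} - 1 .
\]
\begin{itemize}
\item $\mathcal{S}_{\theta_0}$ is symmetric, since $(1+(-\Delta)^{1/2})$ is a nonnegative Fourier multiplier and multiplication by the real bounded function $s_{\theta_0}$ is symmetric.
\item $\mathcal{S}_{\theta_0}$ is bounded from $H^1$ to $L^2$. This uses $\theta_0\in W^{2,\infty}$ (Proposition \ref{propNeelw}(e)), so multiplication by $s_{\theta_0}$ preserves $H^1$.
\item $c_{\theta_0}$ is real and bounded.
\end{itemize}
Hence $\mathcal{B}$ is symmetric and relatively bounded with respect to $-\partial_x^2$ with relative bound $0$, by interpolation $\|u\|_{H^1}\le \epsilon\|u\|_{H^2}+C_\epsilon\|u\|_{L^2}$. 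The Kato--Rellich theorem then gives that $\cL_0$ is self-adjoint and closed on $H^2$. It is also densely defined.

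To see that $\partial_x\theta_0$ lies in the kernel, differentiate the Euler--Lagrange equation \eqref{ELeq} in $x$. Translation invariance gives
\[
\cL_0\,\partial_x\theta_0 = 0,
\]
with $\partial_x\theta_0\in H^2$ by Proposition \ref{propNeelw}(c).

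\textbf{Coercivity, part (d).} The key input is \eqref{Hesspos}, read as $\langle \cL_0 u,u\rangle \ge \|u\,\partial_x\theta_0\|^2 + \Re\, b[us_{\theta_0},us_{\theta_0}]$ for $u(0)=0$.

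I will first transfer this from the constraint $u(0)=0$ to the constraint $u\perp\partial_x\theta_0$. For $u\in H^1_\perp$, write $u = v + \alpha\,\partial_x\theta_0$ with
\[
\alpha = \frac{u(0)}{\partial_x\theta_0(0)},
\]
so that $v(0)=0$. Because $\partial_x\theta_0$ is in the kernel and $\cL_0$ is symmetric, $\langle\cL_0 u,u\rangle = \langle \cL_0 v,v\rangle$. Therefore
\[
\langle\cL_0 u,u\rangle \ge \|v\,\partial_x\theta_0\|^2 + \|v\, s_{\theta_0}\|^2_{L^2},
\]
which is a positive quadratic form $q(v)$.

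Next I need to show $q(v)\gtrsim \|u\|_{L^2}^2$. I would argue by contradiction. Suppose there are $u_n\in H^1_\perp$ with $\|u_n\|_{L^2}=1$ and $\langle\cL_0 u_n,u_n\rangle\to0$.
\begin{itemize}
\item Near the origin $\partial_x\theta_0>0$, and away from it $|s_{\theta_0}|\ge c>0$. So $q(v_n)\to0$ forces $v_n\to0$ in $L^2$.
\item Control of $\alpha_n$: this comes from a uniform $H^1$ bound obtained from the $\|\partial_x u\|^2$ part of the quadratic form, using a G\aa rding-type inequality $\langle\cL_0u,u\rangle\ge \tfrac12\|\partial_x u\|^2 - C\|u\|^2$.
\item Then $u_n - \alpha_n\partial_x\theta_0\to 0$ in $L^2$. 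Orthogonality gives $\alpha_n\|\partial_x\theta_0\|^2=\langle u_n,\partial_x\theta_0\rangle=0$, so $\alpha_n=0$ and $\|u_n\|\to0$, a contradiction.
\end{itemize}
This yields $\Lambda_0>0$ with $\langle\cL_0 u,u\rangle\ge\Lambda_0\|u\|^2$ on $H^1_\perp\supset H^k_\perp$.

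\textbf{Spectrum and simplicity, parts (b) and (c).} Since $\cL_0$ is self-adjoint and $L^2_\perp$ is invariant (it is the orthogonal complement of an eigenvector), the min--max principle on $L^2_\perp$ gives $\sigma(\cL_0|_{L^2_\perp})\subset[\Lambda_0,\infty)$. This gives (b). The same bound shows that the kernel is exactly $\mathrm{span}\{\partial_x\theta_0\}$, and self-adjointness excludes Jordan chains, which gives simplicity in (c).

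\textbf{Main obstacle.} I expect the hardest step to be the compactness and contradiction argument. The difficulty is making the passage from the pointwise constraint $u(0)=0$ to $L^2$-orthogonality rigorous: $\alpha_n$ depends on the point value $u_n(0)$, and the lower bound must be uniform despite the nonlocal term. My first attempt would use the $H^1$ G\aa rding bound, so that point evaluation is controlled and a weak-limit argument applies.
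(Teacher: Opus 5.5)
Your argument is correct apart from one slip, noted below. It is also a genuinely different route: the paper gives no argument here and only defers to the cited works of Capella--Melcher--Otto and Capella et al.

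You obtain (a) from Kato--Rellich and the kernel by differentiating \eqref{ELeq}. You then extract (b)--(d) entirely from \eqref{Hesspos}, combined with the elementary fact
\[
c_*:=\inf_{\R}\big[(\partial_x\theta_0)^2+\sin^2\theta_0\big]>0 .
\]
This holds because $\partial_x\theta_0>0$ is continuous and $\sin^2\theta_0\to 1$ at $\pm\infty$. Your route buys an explicit gap constant and avoids any essential-spectrum or compactness analysis. The price is that all the real work sits in \eqref{Hesspos}, which is itself imported from the same references. So your reduction is only as self-contained as that estimate.

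The slip is in the last step of your contradiction argument. Orthogonality gives $\alpha_n\|\partial_x\theta_0\|_{L^2}^2=-\langle v_n,\partial_x\theta_0\rangle_{L^2}$, not $\langle u_n,\partial_x\theta_0\rangle_{L^2}$. So you get $\alpha_n\to 0$ rather than $\alpha_n=0$, and the contradiction still follows.

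More to the point, the contradiction argument, the G\aa rding bound and your ``main obstacle'' are all unnecessary. Since $u\perp\partial_x\theta_0$ and $u=v+\alpha\,\partial_x\theta_0$, the function $u$ is the $L^2$-orthogonal projection of $v$ onto $L^2_\perp$. Hence $\|u\|_{L^2}\le\|v\|_{L^2}$, and directly
\[
\langle\cL_0u,u\rangle_{L^2}=\langle\cL_0v,v\rangle_{L^2}\ \ge\ \|v\,\partial_x\theta_0\|_{L^2}^2+\|v\sin\theta_0\|_{L^2}^2\ \ge\ c_*\|v\|_{L^2}^2\ \ge\ c_*\|u\|_{L^2}^2 .
\]
Thus (d) holds with $\Lambda_0=c_*$, up to the normalization constant of the Fourier transform in $b$. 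You never need to control the point value $u(0)$ at all. The rest of your argument then goes through as written: the spectral theorem on the reducing subspace $L^2_\perp$ gives (b), and self-adjointness gives simplicity in (c).
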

\begin{proof}
See Lemma 3 in \cite{CMO07}, as well as Theorem 4.1, Proposition 4.6 and Lemmata 4.9 and 4.10 in \cite{CMMP24}.
\end{proof}

If $\theta_0 + u$ is a solution to the nonlinear wave equation \eqref{reddyneqhom} where $u$ now denotes a perturbation, then the linearized equation around $\theta_0$ takes the form
\[
\partial_t^2 u + \nu \partial_t u + \cL_0 u = 0.
\]
This is a linear, second order wave equation for the perturbation $u$, which requires initial conditions for both $u$ and $\partial_t u$. Performing the standard change of variables in order to write it as a first order system, set $v = \partial_t u$ and recast it as
\[
\partial_t  \begin{pmatrix} u \\ v \end{pmatrix} = \begin{pmatrix} 0 & I \\ - \cL_0 & \nu I  \end{pmatrix} \begin{pmatrix} u \\ v \end{pmatrix}.
\]
Hence, we define the block matrix operator $\cA_0$ as
\begin{equation}
\label{defA0}
\left\{
\begin{aligned}
\cA_0 &: H^1 \times L^2 \to H^1 \times L^2,\\
D(\cA_0) &= H^2 \times H^1,\\
\cA_0 &:= \begin{pmatrix} 0 & I \\ - \cL_0 & \nu I  \end{pmatrix}.
\end{aligned}
\right.
\end{equation}
\begin{remark}
\label{remarkA0}
$\cA_0$ is a closed, densely defined operator in $H^1 \times L^2$ (cf. \cite{CMMP24}). Here, the operator $\cL_0$ appearing in \eqref{defA0} refers to the restriction of $\cL_0$ to $H^1$, namely, to ${\cL_0}_{|H^1}$, which we write once again as $\cL_0$ with a slight abuse of notation. Nonetheless, its spectral properties (see Proposition \ref{propspecL0}) remain the same; see also Remark 5.2 in \cite{CMMP24}.
\end{remark}

The following Proposition summarizes the main properties of the block operator $\cA_0$, which will be useful later on.

\begin{proposition}[spectral properties of $\cA_0$ \cite{CMMP24,CMMP25}]
\label{propA0}
Let $\cA_0$ be the block operator defined in \eqref{defA0}. Then it satisfies the following:
\begin{itemize}
\item[\rm{(a)}] $\lambda = 0$ is a simple and isolated eigenvalue of $\cA_0$, associated to the eigenfunction
\begin{equation}
\label{defThet0}
\Theta_0 := (\partial_x \theta_0, 0) \in D(\cA_0) = H^2 \times H^1.
\end{equation}
\item[\rm{(b)}] There exists $\zeta_0(\nu) > \tfrac{1}{2}\nu > 0$ such that
\begin{equation}
\label{spectA0}
\sigma(\cA_0) \subset \{ 0 \} \cup \{ \lambda \in \C \, : \, \Re \lambda \leq - \zeta_0(\nu) < 0 \}.
\end{equation}
\item[\rm{(c)}] $\cA_0$ is the infinitesimal generator of a $C_0$-semigroup of quasi-contractions, $\{ e^{t \cA_0} \}_{t \geq 0} \subset \ccB(H^1 \times L^2)$, and there exists $\omega \in \R$ such that
\[
\big\| e^{t \cA_0}\big\|_{H^1 \times L^2 \to H^1 \times L^2} \leq e^{\omega t},
\]
for all $t \geq 0$.
\end{itemize}
\end{proposition}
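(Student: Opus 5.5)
The plan is to reduce every assertion about the block operator $\cA_0$ to the spectral information on $\cL_0$ in Proposition \ref{propspecL0}. The tool is the quadratic pencil $\lambda^2+\nu\lambda$. Throughout I read the lower-right entry of $\cA_0$ as the damping term coming from $\partial_t^2u+\nu\partial_t u+\cL_0u=0$, i.e.\ as $-\nu I$; the sign in \eqref{defA0} looks like a typo. If $\cA_0 U=\lambda U$ with $U=(u,v)\in H^2\times H^1$, then $v=\lambda u$ and $(\cL_0+\lambda^2+\nu\lambda)u=0$. More generally, solving $(\cA_0-\lambda)(u,v)=(f,g)$ gives
\[
v=\lambda u-f, \qquad (\cL_0+\lambda^2+\nu\lambda)\,u=-g-(\lambda+\nu)f .
\]
Hence $\lambda\in\rho(\cA_0)$ as soon as $-(\lambda^2+\nu\lambda)\in\rho(\cL_0)$, where $\cL_0$ is understood as its restriction to $H^1$ as in Remark \ref{remarkA0}. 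What remains is to check that the solution map is bounded from $H^1\times L^2$ to itself. This needs $(\cL_0+\mu)^{-1}:L^2\to H^1$ bounded (indeed into $H^2$), together with the fact that $(\cL_0+\mu)^{-1}$ preserves $H^1$ for the $f$-term. I will obtain both from the ellipticity of $-\partial_x^2$, since $\cS_{\theta_0}$ has order one and $c_{\theta_0}$ is bounded by Proposition \ref{propNeelw}.

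\emph{Part (a).} Take $\lambda=0$. Then $v=0$ and $\cL_0u=0$, so Proposition \ref{propspecL0}(c) gives $u\in\mathrm{span}\{\partial_x\theta_0\}$, and the kernel is spanned by $\Theta_0$. For algebraic simplicity I rule out a Jordan chain. Suppose $\cA_0W=\Theta_0$ with $W=(w_1,w_2)$. The first row forces $w_2=\partial_x\theta_0$. The second row then gives $\cL_0w_1=\mp\nu\,\partial_x\theta_0$. Pairing with $\partial_x\theta_0$ and using self-adjointness (Proposition \ref{propspecL0}(a)) yields $\nu\|\partial_x\theta_0\|_{L^2}^2=0$, a contradiction. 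Isolation of $0$ follows from part (b).

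\emph{Part (b).} By the reduction, $\sigma(\cA_0)\setminus\{0\}$ is contained in the set of roots of $\lambda^2+\nu\lambda+\mu=0$ with $\mu\in\sigma(\cL_0)\subset\{0\}\cup[\Lambda_0,\infty)$. The roots are $\lambda=-\tfrac{\nu}{2}\pm\sqrt{\nu^2/4-\mu}$, and I split into three cases:
\begin{itemize}
\item[(i)] $\mu=0$: this gives $\lambda\in\{0,-\nu\}$.
\item[(ii)] $\mu>\nu^2/4$: then $\Re\lambda=-\nu/2$.
\item[(iii)] $\Lambda_0\le\mu\le\nu^2/4$: the roots are real and at most $-\tfrac{\nu}{2}+\sqrt{\nu^2/4-\Lambda_0}<0$.
\end{itemize}
This produces an explicit uniform gap $\zeta_0(\nu)>0$ depending only on $\nu$ and $\Lambda_0$, and I would match the precise value of $\zeta_0$ to the one recorded in \cite{CMMP24,CMMP25}. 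I must also exclude spurious spectrum where $\cL_0+\lambda^2+\nu\lambda$ is invertible on $L^2$ but not on the $H^1$-based space. For that I would use the elliptic regularity above, which is uniform for $\lambda$ in compact sets.

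\emph{Part (c).} I plan to apply Lumer--Phillips in an equivalent inner product on $H^1\times L^2$, namely
\[
\langle (u,v),(w,z)\rangle_\ast=\langle(\cL_0+\kappa)u,w\rangle_{L^2}+\langle v,z\rangle_{L^2}.
\]
Choosing $\kappa$ large gives a Gårding inequality $\langle(\cL_0+\kappa)u,u\rangle\gtrsim\|u\|_{H^1}^2$. This uses $\langle\cS_{\theta_0}u,u\rangle=b[us_{\theta_0},us_{\theta_0}]\ge0$ and the boundedness of $c_{\theta_0}$. A direct computation then gives $\Re\langle\cA_0U,U\rangle_\ast\le\omega\|U\|_\ast^2$ with $\omega$ depending on $\kappa$ and $\nu$. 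Surjectivity of $\cA_0-\lambda$ for large real $\lambda$ comes from the resolvent formula, because $\lambda^2+\nu\lambda$ then lies in $\rho(\cL_0)$. Renorming gives the quasi-contraction estimate $\|e^{t\cA_0}\|\le e^{\omega t}$.

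The main obstacle is the mapping properties of the nonlocal operator $\cS_{\theta_0}$ on $H^1$. These are needed to show that $\cL_0$ restricted to $H^1$ has the same spectrum and that $\langle\cdot,\cdot\rangle_\ast$ is equivalent to the standard norm. For this I would use commutator estimates for $[(-\Delta)^{1/2},s_{\theta_0}]$, based on $\theta_0\in W^{2,\infty}$ from Proposition \ref{propNeelw}(e).
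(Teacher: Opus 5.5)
Your quadratic-pencil reduction is the right tool. The paper gives no argument of its own and only cites \cite{CMMP24,CMMP25}. There is, however, a genuine gap in part (b): the statement asserts $\zeta_0(\nu)>\tfrac12\nu$, and your own case (ii) refutes this rather than being something to ``match'' later.

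Your reduction in fact yields equality of spectra. If $\lambda\in\rho(\cA_0)$, solving $(\cA_0-\lambda)(u,v)=(0,-g)$ for arbitrary $g\in L^2$ gives $u\in H^2$ with $(\cL_0+\lambda^2+\nu\lambda)u=g$. This $u$ is unique, because any $u\in H^2$ in the kernel of $\cL_0+\lambda^2+\nu\lambda$ makes $(u,\lambda u)\in\ker(\cA_0-\lambda)$. Thus $\sigma(\cA_0)=\{\lambda\in\C : -(\lambda^2+\nu\lambda)\in\sigma(\cL_0)\}$.

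Since $\cL_0$ is self-adjoint and unbounded, $\sigma(\cL_0)$ contains points $\mu>\nu^2/4$. Hence $-\tfrac{\nu}{2}\pm i\sqrt{\mu-\nu^2/4}\in\sigma(\cA_0)$, and no $\zeta_0>\tfrac12\nu$ can satisfy \eqref{spectA0}. You should state explicitly that (b) holds only in the corrected form $0<\zeta_0(\nu)\le\tfrac12\nu$. Your three cases give the admissible value $\zeta_0(\nu)=\tfrac{\nu}{2}-\sqrt{\max\{0,\nu^2/4-\Lambda_0\}}$; the eigenvalue $-\nu$ from case (i) lies to the left of this line. This corrected version is all the paper uses afterwards: Lemma \ref{lemsimpleM0} and Corollary \ref{corGammas} need only $\zeta_0>0$. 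With the correction, your argument is a self-contained proof of what is actually needed.

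Three smaller repairs are also needed.

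First, excluding a Jordan chain of length two only gives $\ker\cA_0^2=\ker\cA_0$. Algebraic simplicity also requires the Riesz projection at $0$ to have finite rank, since a quasinilpotent part on an infinite-dimensional spectral subspace is not excluded by that. Your resolvent formula supplies this. The resolvent $(\cL_0+\mu)^{-1}$ has a simple pole at $\mu=0$ with rank-one residue, and $\mu=\lambda(\lambda+\nu)$ has a simple zero at $\lambda=0$. So $(\cA_0-\lambda)^{-1}$ has a simple pole at $0$ whose residue has range $\mathrm{span}\{\Theta_0\}$.

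Second, renorming gives $\|e^{t\cA_0}\|\le e^{\omega t}$ only in the $\ast$-norm, hence $Me^{\omega t}$ with some $M\ge1$ in the standard norm. To get $M=1$ as stated, compute in the standard $H^1\times L^2$ inner product itself. There the contributions $\langle\partial_x v,\partial_x u\rangle_{L^2}$ and $-\langle\partial_x u,\partial_x v\rangle_{L^2}$ cancel in the real part, leaving $\Re\langle\cA_0U,U\rangle\le(1+\|\cS_{\theta_0}\|_{H^1\to L^2}+\|c_{\theta_0}\|_{L^\infty})\|u\|_{H^1}\|v\|_{L^2}-\nu\|v\|_{L^2}^2$.

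Third, your announced main obstacle is not one. The right-hand side $-g-(\lambda+\nu)f$ lies in $L^2$, and $v=\lambda u-f\in H^1$ automatically. So only $(\cL_0+\mu)^{-1}\in\ccB(L^2,H^2)$ enters, and no spurious spectrum can occur. Equivalence of $\langle\cdot,\cdot\rangle_\ast$ follows from $b[us_{\theta_0},us_{\theta_0}]\lesssim\|us_{\theta_0}\|_{H^1}^2\lesssim\|u\|_{H^1}^2$, since $s_{\theta_0}\in W^{1,\infty}$. No commutator estimates are needed.
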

\begin{proof}
See Theorem 2.4 in \cite{CMMP25}, and Lemmata 5.6 and 5.5 in \cite{CMMP24} for the proof.
\end{proof}

\section{Existence and structure of time-periodic N\'{e}el walls}
\label{seconw}

This Section is devoted to show the existence of time-periodic solutions to the Landau-Lifshitz-Gilbert model in the thin film limit in the presence of a time-periodic external applied magnetic field. The effective model under consideration has the form,
\begin{equation}
 \label{LLG-H}
 \left\{ \,
\begin{aligned}
&\partial_t^2 \theta + \nu \partial_t \theta + \nabla \cE(\theta) = \varepsilon H(t) \cos\theta, \\
&\theta(-\infty,t) =-\pi/2,\quad \theta(\infty,t) =\pi/2,
\end{aligned}
\right.
\end{equation}
where $\theta = \theta(x,t)$ denotes the N\'eel wall's phase, $\varepsilon > 0$ is a scalar parameter that measures the external magnetic field strength, and  $\cE(\theta)$ is the effective energy appearing in \eqref{varprob}. In this model, $H = H(t)$ represents a continuously differentiable time-periodic magnetic field, with constant fundamental period $T > 0$. More precisely, we assume that the external magnetic field is \emph{$T/2$-symmetric}, namely
\begin{equation}
\label{T2symm}
H(t) = - H(t + T/2),
\end{equation}
for all $t$. Relation \eqref{T2symm} clearly implies that the function $H$ is $T$-periodic and has zero mean over a period,
\[
\int_0^T H(t) \, dt = 0.
\]

\begin{remark}
\label{remT2symm}
Materials with intrinsic magnetism, like ferromagnets below their Curie temperature, have a net magnetic moment which inherently breaks time-reversal symmetry. Indeed, the presence of an ordered magnetic state introduces a fixed directional dependence, and reversing time would reverse the spin currents (a moving charged particle's direction of deflection becomes asymmetrical in time). Hence, the assumption of a $T/2$-symmetric external magnetic field is compatible with materials with broken time-reversal symmetry, allowing for a specific direction of time to be preferred (see, e.g., \cite{dBLB19}). Applications include both the superconducting diode \cite{NFW23} and the crystal Hall effects \cite{SGJS20}.
%
%
%
%
\end{remark}

Before stating the main existence result, it is convenient to define the function spaces to work on.

\begin{definition}
\label{defspaces}
    Set $Q_T = \mathbb{T}_T\times \R$, where $\mathbb{T}_T$ is the flat torus of length $T > 0$. For $k,l \in \N_0$ we define the function spaces
    \begin{align*}
        C^k_tH^l_x(Q_T) &:= C^k(\mathbb{T}_T;H^l(\R)) \\
        C^k_tH^l_\perp(Q_T) &:= \{u \in C^k_tH^l_x(Q_T) \: | \: \pld{u(\cdot, s)}{\partial_x \theta_0(\cdot)} = 0 \text{ for all } s \in \bb{T}_T\},
    \end{align*}
    as well as the linear operator 
    \begin{gather*}
        \cT_0:D(\cT_0) \to C^0_tH^1_\perp(Q_T), \\
        \cT_0u := \partial_t^2 u + \nu \partial_t u + \mathcal{L}_0 u,
    \end{gather*}
    with domain 
    \begin{gather*}
        D(\cT_0) := \{u \in (C^0_tH^2_\perp \cap C^1_tH^1_\perp \cap C^2_t L^2_\perp)(Q_T) \: | \: (\partial_t^2 - \partial_x^2)u \in C^0_tH^1_\perp(Q_T)\}.
    \end{gather*}
\end{definition}

\begin{remark}
    The space $D(\cT_0)$ is complete with respect to the graph norm $\|\cdot\|_{D(\cT_0)}$ by Proposition \ref{T_0_closed} below.
\end{remark}

The main result of this Section can be stated as follows.

\begin{theorem}[existence of the time-periodic oscillating N\'eel wall]
    \label{theorem_existence}
    Let $H(t)$ be a bounded and continuously differentiable $T/2$-symmetric applied field satisfying \eqref{T2symm}. For a sufficiently small amplitude $\varep > 0$, there exists a solution $\bar\theta = \bar\theta(x,t)$ to the reduced Landau-Lifshitz-Gilbert dynamics equation \eqref{LLG-H}
    connecting antipodal states at infinity $\bar\theta(\pm\infty,t) = \pm \frac{\pi}{2}$ for all $t \in \bb{T}_T$, such that 
    \begin{align}
        \label{def_theta}
        \bar\theta(x,t) = \theta_0(x +X(t)) + \chi(x,t),
    \end{align}
    where $X \in C^2$ is $T/2$-symmetric and $\chi(x,t) = -\chi(-x,t +T/2)$ is $T$-periodic in time. Moreover $\left \|\chi \right\|_{D(\cT_0)} = O(\varep)$ and $X = \varep Y + o(\varep)$. Here $Y$ satisfies
    \begin{align}
        \label{equation_Y}
        \ddot{Y} + \nu \dot{Y} = 2\nld{\partial_x \theta_0}^{-2}H,
    \end{align}
    where $\dot{Y} = dY/dt$.
\end{theorem}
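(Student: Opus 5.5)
We look for the solution in the form \eqref{def_theta} and use a Lyapunov--Schmidt reduction. The translation mode $\partial_x\theta_0$ is handled by the modulation $X(t)$, and the transversal remainder $\chi$ by inverting $\cT_0$ on the orthogonal complement.

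\emph{Reformulation.} First change variables $y = x + X(t)$, so that $\tilde\chi(y,t) = \chi(y - X(t),t)$. Substituting $\bar\theta = \theta_0(y) + \tilde\chi(y,t)$ into \eqref{LLG-H} and using $\nabla\cE(\theta_0)=0$ and $\cL_0 = D^2\cE(\theta_0)$ gives
\[
\cT_0\tilde\chi + (\ddot X + \nu \dot X)\,\partial_x\theta_0 = \varep H(t)\cos\theta_0 + N(\tilde\chi, X, \varep).
\]
The remainder $N$ collects the quadratic Taylor remainder of $\nabla\cE$ at $\theta_0$, the transport terms $\dot X^2\partial_x^2\theta_0$, $2\dot X\partial_x\partial_t\tilde\chi$, $\dot X^2 \partial_x^2\tilde\chi$, $\ddot X\partial_x\tilde\chi$, $\nu\dot X\partial_x\tilde\chi$ coming from the moving frame, and the term $\varep H(\cos(\theta_0+\tilde\chi)-\cos\theta_0)$.

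\emph{Projection.} Next project this equation onto $\partial_x\theta_0$ and onto $L^2_\perp$. Since $\cL_0$ is self-adjoint with kernel spanned by $\partial_x\theta_0$ (Proposition~\ref{propspecL0}), and $\tilde\chi(\cdot,t)\in L^2_\perp$, the projection of $\cT_0\tilde\chi$ onto $\partial_x\theta_0$ vanishes. This yields the scalar ODE
\[
(\ddot X+\nu\dot X)\,\|\partial_x\theta_0\|^2 = \varep H\langle \cos\theta_0,\partial_x\theta_0\rangle + \langle N,\partial_x\theta_0\rangle.
\]
Here $\langle\cos\theta_0,\partial_x\theta_0\rangle = [\sin\theta_0]_{-\infty}^{\infty}=2$, which produces \eqref{equation_Y} at leading order. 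The transversal part is $\cT_0\tilde\chi = P_\perp(\varep H\cos\theta_0 + N)$.

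\emph{Solving the ODE.} For a forcing $f$ with zero mean, write $\ddot X + \nu\dot X = f$ and set $V = \dot X$. Since $\nu>0$, the equation $\dot V + \nu V = f$ has a unique $T$-periodic solution $V$. In order for $X$ to be periodic, $V$ must have zero mean, and averaging the equation gives $\nu\bar V = \bar f$. The symmetry \eqref{T2symm} is what guarantees $\bar f = 0$: we work in the closed subspace of $T/2$-antisymmetric functions $X$, together with $\chi(x,t) = -\chi(-x,t+T/2)$. Because $\theta_0$ is odd, $\cos\theta_0$ is even and $\partial_x\theta_0$ is even. One then checks that the equation is equivariant under the map $(x,t)\mapsto(-x,t+T/2)$, $\theta\mapsto-\theta$, $H\mapsto -H$, so all forcing terms inherit the antisymmetry and in particular have zero mean. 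Fixing the mean of $X$ to zero then makes $X$ unique.

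\emph{Invertibility of $\cT_0$.} Here we need that $\cT_0 : D(\cT_0)\to C^0_tH^1_\perp$ is an isomorphism. I would prove this by Fourier expansion in time: the $k$-th mode requires inverting $\cL_0 - (2\pi k/T)^2 + i\nu(2\pi k/T)$ on $L^2_\perp$. By self-adjointness and Proposition~\ref{propspecL0}(d), the resolvent is bounded by $1/(\nu |2\pi k/T|)$ for $k\neq0$ and by $\Lambda_0^{-1}$ for $k=0$. Since these bounds do not improve the time regularity to $C^2_t$, I would instead use the semigroup $e^{t\cA_0}$ restricted to the complement of $\Theta_0$. It decays exponentially by Proposition~\ref{propA0}(b), assuming spectral bounds convert into growth bounds; the paper's Proposition~\ref{T_0_closed} presumably supplies this. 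The periodic solution is then given by the Duhamel formula $\int_{-\infty}^t e^{(t-s)\cA_0}(0,f(s))\,ds$, with $H^1$ forcing giving $C^0_tH^2\cap C^1_tH^1\cap C^2_tL^2$ regularity.

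\emph{Fixed point and main obstacle.} Finally, solve for $(X,\tilde\chi)$ by a contraction on a ball of radius $C\varep$ in $C^2_{T/2\text{-anti}}\times D(\cT_0)$, or by the implicit function theorem in $\varep$. This gives $\|\tilde\chi\|=O(\varep)$ and $X=\varep Y+O(\varep^2)$, and we then transform back to $\chi$. The main obstacle is the loss of derivatives in $N$. The terms $\dot X\partial_x\partial_t\tilde\chi$ and $\dot X^2\partial_x^2\tilde\chi$ must land in $C^0_tH^1$, but $D(\cT_0)$ controls only $\partial_t\tilde\chi\in H^1$ and $\partial_x^2\tilde\chi\in H^0$ in general. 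To handle this, I would first try to keep those terms on the left-hand side, as a small perturbation of the principal part $\partial_t^2 - (1-\dot X^2)\partial_x^2 + 2\dot X\partial_x\partial_t$. This operator is a uniformly hyperbolic wave operator for small $\dot X$, and I would prove energy estimates for it uniformly in $X$. I would also check that the nonlocal term $\cS$ is Lipschitz on $H^2$ using $\theta_0\in W^{2,\infty}$.
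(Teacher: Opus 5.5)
There is a genuine gap, and it comes from your change to the moving frame $y=x+X(t)$. In that frame, $\partial_t^2$ acting on the remainder produces $2\dot X\,\partial_y\partial_t\tilde\chi+\dot X^2\partial_y^2\tilde\chi$. These terms have the same order as the principal part, and for $\tilde\chi\in D(\cT_0)$ they lie only in $C^0_tL^2$. The damped wave operator has no smoothing effect: $\cT_0^{-1}$ maps $C^0_tH^1_\perp$ onto $D(\cT_0)$ and nothing better. Hence your nonlinear map does not send $D(\cT_0)\times C^2\times\R$ into $C^0_tH^1_x$, and neither the contraction nor the implicit function theorem can be run as stated. Lowering the target to $C^0_tL^2$ fails for the same reason.

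Your remedy, absorbing these terms into a modified hyperbolic principal part, is only announced. It would require a $T$-periodic solution theory for a non-autonomous operator whose coefficients depend on the unknown $X$. Exponential decay and the orthogonality structure would have to be re-established, since the transport terms do not preserve $L^2_\perp$. Moreover, differentiating that solution operator in $X$ again produces terms like $\dot Y\,\partial_y\partial_t\tilde\chi$. You would therefore be pushed into a quasilinear-type iteration, with bounds in a strong norm and contraction in a weak one.

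Finally, even a moving-frame solution would not give the statement as written. After transforming back, $\chi(\cdot,t)$ is orthogonal to $\partial_x\theta_0(\cdot+X(t))$ rather than to $\partial_x\theta_0$. Also, $\tilde\chi\in D(\cT_0)$ does not transfer to $\chi\in D(\cT_0)$, because $(\partial_t^2-\partial_x^2)\chi$ picks up $2\dot X\partial_t\partial_y\tilde\chi+\dot X^2\partial_y^2\tilde\chi$. The theorem, however, asserts $\|\chi\|_{D(\cT_0)}=O(\varep)$.

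The missing observation is that no change of frame is needed: keep $\chi$ in the fixed frame, exactly as in \eqref{def_theta}. Then $\partial_t^2\theta=\ddot X\,\theta_0'(x+X)+\dot X^2\theta_0''(x+X)+\partial_t^2\chi$, and similarly for $\partial_t\theta$. All $X$-dependence passes through the smooth profile $\theta_0(\cdot+X(t))$, whose derivatives lie in every $H^k$. The unknown $\chi$ enters only through $(\partial_t^2-\partial_x^2)\chi\in C^0_tH^1$ and lower-order terms.

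This is the paper's route. The map $\cG(\chi,X,\varep)$ is $C^1$ from $D(\cT_0)_{\widehat{\pi}}\times C^2_{\widehat{\pi}}\times\R$ into $C^0_tH^1_x(Q_T)_{\widehat{\pi}}$, with derivative at the origin $\Gamma_0(\rho,Y)=\cT_0\rho+\partial_x\theta_0(\ddot Y+\nu\dot Y)$. Inverting $\Gamma_0$ (Lemma \ref{lemma_Gamma_inv}) is exactly your projection step: Theorem \ref{dynamical_linearized} on the orthogonal part, plus the $T/2$-antisymmetric scalar ODE, with the same zero-mean symmetry argument you give. The implicit function theorem then yields everything at once. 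Equation \eqref{equation_Y} follows by differentiating $\cG(g(\varep),\varep)=0$ at $\varep=0$ and testing with $\partial_x\theta_0$, using $\langle\cos\theta_0,\partial_x\theta_0\rangle=2$.

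One smaller point concerns exponential decay on the complement of $\Theta_0$. It does not follow from the spectral bound in Proposition \ref{propA0} alone, since spectral and growth bounds can differ for $C_0$-semigroups, and Proposition \ref{T_0_closed} only concerns closedness. The paper obtains the decay directly from the energy argument of Lemma \ref{exponential_decay}. There the functional $\|\partial_tu\|^2+\langle u,\cL_0u\rangle+\tfrac{\nu^2}{2}\|u\|^2+\nu\langle u,\partial_tu\rangle$ satisfies $f'\le -Cf$.
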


\begin{remark}
Theorem \ref{theorem_existence} guarantees the existence of a time-periodic N\'eel wall, $\brt = \brt(x,t)$, solution to \eqref{LLG-H} when a small amplitude external $T$-periodic magnetic field is applied. This solution represents an order $O(\vep)$-perturbation of the well-known static N\'eel wall, which is periodically time-oscillating with the same fundamental period as the period of the applied magnetic field. The solution is a real function, which oscillates and keeps the shape of the static N\'eel wall up to an error term, namely $\chi(x,t)$, which is small. The motion can be understood as a time-periodic translation $X(t)$ of the static phase $\theta_0(x)$.
\end{remark}

\subsection{The linearized problem}

Prior to the analysis of the nonlinear equation \eqref{LLG-H} it is necessary to examine its linear part which coincides with the linearization around the static N\'eel wall $\theta_0$. The main result concerning this linearized problem is the following result.

\begin{theorem}
    \label{dynamical_linearized}
    Let $f \in C^0_tH^1_\perp(Q_T)$. Then the equation
    \begin{align*}
        \partial_t^2 u + \nu\partial_t u + \mathcal{L}_0 u = f
    \end{align*}
    admits solution $u \in (C^2_tL^2_x\cap C^1_tH^1_x \cap C^0_tH^2_x) (Q_T)$ unique up to a multiple of $\partial_x \theta_0$.
\end{theorem}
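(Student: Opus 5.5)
The plan is to exploit the self-adjointness and spectral gap of $\cL_0$ from Proposition \ref{propspecL0}, diagonalizing in time via Fourier series on $\mathbb{T}_T$. Since $f(\cdot,t)\in H^1_\perp$ for every $t$ and $L^2_\perp$ is invariant under the self-adjoint $\cL_0$, I will work with $\cL_\perp := \cL_0|_{L^2_\perp}$, which satisfies $\langle \cL_\perp u,u\rangle_{L^2}\ge \Lambda_0\|u\|_{L^2}^2$. The uniqueness statement then reduces to two facts. First, the $\partial_x\theta_0$-component $a(t)=\langle u,\partial_x\theta_0\rangle/\|\partial_x\theta_0\|^2$ of any solution satisfies $\ddot a+\nu\dot a=0$, because $\cL_0$ is self-adjoint with $\cL_0\partial_x\theta_0=0$ and $f\perp\partial_x\theta_0$. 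Its $T$-periodic solutions are constants, since multiplying by $\dot a$ and integrating over a period gives $\nu\int_0^T|\dot a|^2=0$. Second, the perpendicular component of a solution is unique: the homogeneous problem on $L^2_\perp$ has only the trivial periodic solution. Testing with $\partial_t u$ gives $\nu\int_0^T\|\partial_tu\|^2=0$, so $u$ is constant in time with $\cL_\perp u=0$, hence $u=0$. Justifying these energy identities needs the regularity $u\in C^1_tH^1\cap C^2_tL^2$, so that $\langle \cL_0u,\partial_tu\rangle=\tfrac12\tfrac{d}{dt}\langle\cL_0u,u\rangle$ holds weakly; this is fine because the quadratic form of $\cL_0$ is defined on $H^1$.

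For existence, I would not use Fourier series, since $f$ is only continuous in time and the series need not converge in $C^0$. I would instead use the semigroup $e^{t\cA_0}$ of Proposition \ref{propA0}, restricted to the invariant closed subspace $X_\perp:=H^1_\perp\times L^2_\perp$. On $X_\perp$ the spectrum lies in $\{\Re\lambda\le-\zeta_0\}$, because the eigenvalue $0$ is removed. The key claim is that the restricted semigroup is exponentially stable, $\|e^{t\cA_0}|_{X_\perp}\|\le Ce^{-\eta t}$. I expect this to be the main obstacle, because a spectral bound alone does not give decay. I would prove it either by citing the decay result of \cite{CMMP24} (exponential decay outside the translation mode), or by a Lyapunov functional on $X_\perp$, namely $E(u,v)=\tfrac12\|v\|^2+\tfrac12\langle\cL_0u,u\rangle+\delta\langle u,v\rangle$. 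This functional is equivalent to the $H^1\times L^2$ norm on $X_\perp$ by Proposition \ref{propspecL0}(d) together with a Gårding-type bound $\langle\cL_0u,u\rangle\gtrsim\|u\|_{H^1}^2-C\|u\|^2$, and its derivative is $\le -cE$ for small $\delta$.

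Given decay, I define the periodic solution by Duhamel's formula,
\[
U(t)=\int_{-\infty}^t e^{(t-s)\cA_0}(0,f(s))\,ds ,
\]
extending $f$ periodically. The integral converges in $X_\perp$, and $U$ is $T$-periodic and continuous. Equivalently, $U(0)=(I-e^{T\cA_0})^{-1}\int_0^Te^{(T-s)\cA_0}F(s)\,ds$, where $I-e^{T\cA_0}$ is invertible on $X_\perp$ because the spectral radius of $e^{T\cA_0}$ there is less than one.

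It remains to upgrade this mild solution to a classical one with $u\in C^0_tH^2$. Since $F=(0,f)\in C^0_t(H^1\times H^1)$ and $f$ is not differentiable in time, I would obtain the extra space regularity by working in the higher space $H^2\times H^1=D(\cA_0)$. The approach is to show that the semigroup restricted to $D(\cA_0)$ with the graph norm is again exponentially stable, since it commutes with $\cA_0$, and that the forcing $(0,f)$ lies in $D(\cA_0)$ up to the first component. Concretely, $\cA_0(w,0)=(0,-\cL_0 w)$, so I would solve $\cL_\perp w=-f(s)$. Then $(0,f)=\cA_0(w,0)$ with $w\in C^0_tH^3\subset C^0_tH^2$, by elliptic regularity of $-\partial_x^2+\mathcal S_{\theta_0}-c_{\theta_0}$. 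Consequently the integrand is $\cA_0 e^{(t-s)\cA_0}(w(s),0)$, and $U(t)\in D(\cA_0)$ with continuous dependence on $t$. This gives $u\in C^0_tH^2$, $v=\partial_tu\in C^0_tH^1$, and $\partial_t v=-\cL_0u-\nu v+f\in C^0_tL^2$, which is the claimed regularity. Adding an arbitrary constant multiple of $\partial_x\theta_0$ gives the full solution set.
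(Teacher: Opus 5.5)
Your proposal is correct and follows the same architecture as the paper: exponential decay of the semigroup on $H^1_\perp\times L^2_\perp$ via a hypocoercive Lyapunov functional, the periodic Duhamel integral $\int_{-\infty}^t e^{(t-s)\cA_0}(0,f(s))\,ds$, and the ODE $\ddot a+\nu\dot a=0$ for the translation mode. The differences are only technical. You restrict the semigroup of Proposition \ref{propA0} instead of rerunning Hille--Yosida on $Z$, and you prove uniqueness of the perpendicular part by an energy identity over one period. You also get the $D(\cA_0)$ bound by commuting the semigroup with $\cA_0$, which is cleaner than the paper's commutator argument with $\llb \partial_x,\cL_0\rrb$; your detour through $w$ is unnecessary, since $(0,f(s))\in H^2\times H^1=D(\cA_0)$ already.
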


We present a somewhat self-contained proof,  following a different argumentation than the one given in \cite{CMMP24}. The proof relies on the following lemmata.

\begin{lemma}\label{lemma_X_hilbert}
    The (real) space $Z := H^1_\perp(\R) \times L^2_\perp(\R)$ equipped with the inner product 
    \begin{align*}
        \skalprod{(u,v)}{(\widetilde{u},\widetilde{v})}_Z := \skalprod{u}{\mathcal{L}_0 \widetilde{u}}_{L^2} + \skalprod{v}{\widetilde{v}}_{L^2}
    \end{align*}
    is a Hilbert space.
\end{lemma}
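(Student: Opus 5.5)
The plan is to show that $\skalprod{\cdot}{\cdot}_Z$ is a symmetric, positive definite bilinear form on $Z$ whose norm is equivalent to the standard norm of $H^1\times L^2$. Completeness then follows because $Z$ is a closed subspace of $H^1(\R)\times L^2(\R)$.

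First I make sense of $\skalprod{u}{\cL_0\widetilde u}_{L^2}$ for $u,\widetilde u\in H^1$ (a priori $\cL_0$ acts on $H^2$). I interpret it as the closed quadratic form of $\cL_0$:
\[
\skalprod{u}{\cL_0 \widetilde u}_{L^2} := \skalprod{\partial_x u}{\partial_x \widetilde u}_{L^2} + b[\widetilde u\, s_{\theta_0}, u\, s_{\theta_0}] - \skalprod{u}{c_{\theta_0}\widetilde u}_{L^2}.
\]
For $\widetilde u\in H^2$ this agrees with the original pairing by integration by parts and Plancherel. The form is bilinear. It is symmetric because $b$ is symmetric on real-valued functions (their Fourier transforms are Hermitian, so $b[f,g]\in\R$) and $c_{\theta_0}$ is real. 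This is consistent with the self-adjointness in Proposition \ref{propspecL0}(a).

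Next I prove the upper bound. By Proposition \ref{propNeelw}(e), $s_{\theta_0}\in W^{1,\infty}$, so multiplication by $s_{\theta_0}$ is bounded on $H^1$. Since $b$ is equivalent to the $H^{1/2}$ inner product and $c_{\theta_0}$ is bounded, I obtain $\skalprod{u}{\cL_0 u}_{L^2}\lesssim \|u\|_{H^1}^2$.

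For the lower bound, I combine two inequalities:
\begin{itemize}
\item the trivial bound $\skalprod{u}{\cL_0 u}_{L^2}\geq \|\partial_x u\|_{L^2}^2 - \|c_{\theta_0}\|_{L^\infty}\|u\|_{L^2}^2$, which holds because $b[f,f]\geq 0$;
\item the coercivity $\skalprod{u}{\cL_0 u}_{L^2}\geq \Lambda_0\|u\|_{L^2}^2$ on $H^1_\perp$ from Proposition \ref{propspecL0}(d).
\end{itemize}
Proposition \ref{propspecL0}(d) is stated for $H^k_\perp$ with $k\in\N$, so $k=1$ is covered. If needed, one passes from $H^2_\perp$ to $H^1_\perp$ by density, using the continuity of the form in $H^1$ just established. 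Taking the convex combination with weights $\delta$ and $1-\delta$ gives
\[
\skalprod{u}{\cL_0 u}_{L^2}\geq \delta\|\partial_x u\|_{L^2}^2 + \big((1-\delta)\Lambda_0-\delta\|c_{\theta_0}\|_{L^\infty}\big)\|u\|_{L^2}^2 .
\]
Choosing $\delta>0$ small makes both coefficients positive, so $\skalprod{u}{\cL_0 u}_{L^2}\gtrsim\|u\|_{H^1}^2$. Hence $\|(u,v)\|_Z^2$ is equivalent to $\|u\|_{H^1}^2+\|v\|_{L^2}^2$ on $Z$, and in particular the form is positive definite.

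Finally, $H^1_\perp$ and $L^2_\perp$ are kernels of the continuous linear functional $u\mapsto\skalprod{u}{\partial_x\theta_0}_{L^2}$; this functional is continuous since $\partial_x\theta_0\in L^2$ by Proposition \ref{propNeelw}(c). Therefore $Z$ is a closed subspace of the Hilbert space $H^1(\R)\times L^2(\R)$. Combined with the norm equivalence, $Z$ is complete for $\|\cdot\|_Z$, and so it is a Hilbert space.

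The only delicate step is the lower bound. Coercivity in $L^2$ alone does not control $\partial_x u$, which is why the convex-combination argument with the trivial bound is needed.
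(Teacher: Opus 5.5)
Your proof is correct and takes the same route as the paper. It first shows that $\langle u,\cL_0 u\rangle_{L^2}$ is equivalent to $\|u\|_{H^1}^2$ on $H^1_\perp$, and then uses that $Z$ is closed in $H^1\times L^2$ as the kernel of a bounded functional. Beyond the paper, you supply two details it leaves implicit: the interpretation of the pairing on $H^1$ as the quadratic form of $\cL_0$, and the upgrade of the merely $L^2$-coercivity in Proposition~\ref{propspecL0}(d) to $H^1$-coercivity via the convex combination with the G\aa rding-type bound.
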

\begin{proof}
    By Proposition \ref{propspecL0} (d), it is known that $\skalprod{u}{\mathcal{L}_0 u}$ is equivalent to $\norm{u}_{H^1}^2$ on $H^1_\perp(\R)$. Therefore, $\norm{(u,v)}_Z$ is equivalent to $\norm{u}_{H^1} + \norm{v}_{L^2}$ on the subspace $Z \subset H^1(\R)\times L^2(\R)$. Since $Z = F^{-1}(\{0\})$ for the bounded linear functional $F:H^1\times L^2 \to \R^2, F(u,v) = \left(\skalprod{u}{\partial_x \theta_0}_{L^2},\skalprod{v}{\partial_x \theta_0}_{L^2}\right)$, $Z$ is indeed a Banach space, as it is a closed subspace of the Hilbert space $(H^1 \times L^2, \norm{\cdot}_{H^1\times L^2})$. Since the norm is induced by a scalar product, $Z$ is a Hilbert space.
\end{proof}

\begin{lemma}\label{lemma_A_closed}
    The linear block-operator $\cA_\perp : D(\cA_\perp) := H^2_\perp(\R)\times H^1_\perp(\R) \subset Z \to Z$ defined by 
    \begin{align}
        \cA_\perp \begin{pmatrix} u \\ v \end{pmatrix} := \begin{pmatrix}
            0 & I \\
            -\mathcal{L}_0 & -\nu I
        \end{pmatrix}\begin{pmatrix}
            u \\ v
        \end{pmatrix}
    \end{align}
    is closed.
\end{lemma}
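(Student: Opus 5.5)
To prove that $\cA_\perp$ is closed, I will verify the definition directly. Take a sequence $(u_n,v_n) \in D(\cA_\perp) = H^2_\perp \times H^1_\perp$ such that $(u_n,v_n) \to (u,v)$ in $Z$ and $\cA_\perp(u_n,v_n) = (v_n, -\cL_0 u_n - \nu v_n) \to (f,g)$ in $Z$. The goal is to show that $(u,v) \in H^2_\perp \times H^1_\perp$ and $\cA_\perp(u,v) = (f,g)$.

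By Lemma \ref{lemma_X_hilbert}, the norm of $Z$ is equivalent to the $H^1 \times L^2$ norm. Hence $u_n \to u$ in $H^1$ and $v_n \to v$ in $L^2$. On the other hand, $v_n \to f$ in $H^1$ and $-\cL_0 u_n - \nu v_n \to g$ in $L^2$. From the first two convergences, $v_n \to v$ in $L^2$ and $v_n \to f$ in $H^1 \subset L^2$, so $v = f \in H^1$. Moreover, $v$ is orthogonal to $\partial_x\theta_0$ because $L^2_\perp$ is closed. Therefore $v \in H^1_\perp$. It then follows that $\cL_0 u_n = -(-\cL_0 u_n - \nu v_n) - \nu v_n \to -g - \nu v$ in $L^2$.

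For the first component I will use that $\cL_0$ is closed in $L^2$ with domain $H^2$, which is Proposition \ref{propspecL0}(a), together with $u_n \to u$ in $L^2$ and $\cL_0 u_n \to -g-\nu v$ in $L^2$. These give $u \in H^2$ and $\cL_0 u = -g - \nu v$. Since $u \in H^1_\perp$ already, we obtain $u \in H^2_\perp$. Consequently $\cA_\perp(u,v) = (v, -\cL_0 u - \nu v) = (f,g)$, which proves closedness.

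The only delicate point is justifying that the $L^2$-closedness of $\cL_0$ yields $H^2$-regularity. This is part of Proposition \ref{propspecL0}(a): a closed self-adjoint operator with domain $H^2$ has graph norm equivalent to the $H^2$ norm, which also follows from the bound $\|\partial_x^2 u\|_{L^2} \le \|\cL_0 u\|_{L^2} + C\|u\|_{H^1}$, since $\cS_{\theta_0}$ maps $H^1$ boundedly into $L^2$ and $c_{\theta_0}$ is bounded. If needed, I will record this estimate explicitly, so that the Cauchy property of $(\cL_0 u_n)$ directly gives $u_n \to u$ in $H^2$.
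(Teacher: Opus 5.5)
Your argument is correct, but it takes a different route from the paper's. The paper does not use sequences. It invokes the closedness of the full block operator $\cA_0$ on $H^1\times L^2$ (Remark \ref{remarkA0}, imported from \cite{CMMP24}) and views $\cA_\perp$ as its restriction (Remark \ref{remarkAperp}). It then observes that $D(\cA_\perp)=D(\cA_0)\cap F^{-1}(\{0\})$ is a closed subspace of $D(\cA_0)$ in the graph norm, hence complete. Together with the equivalence of $\norm{\cdot}_Z$ and the $H^1\times L^2$ norm, this gives closedness.

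You instead use only two ingredients: the closedness of the scalar operator $\cL_0$ on $L^2$ with domain $H^2$ (Proposition \ref{propspecL0}(a)), and the norm equivalence of Lemma \ref{lemma_X_hilbert}. From these you derive the block-operator closedness by hand: you identify $v=f$ from the two convergences of $(v_n)$, deduce convergence of $\cL_0u_n$, and conclude.

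Given the cited result, the paper's version is shorter. Yours is self-contained and in effect reproves closedness of $\cA_0$. It is also insensitive to the sign mismatch of the $\nu I$ entry between \eqref{defA0} and the statement. That entry is bounded on $L^2$, so the mismatch is harmless, but the paper's ``restriction'' identification passes over it.

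Your last paragraph is not needed. Since $D(\cL_0)=H^2$ exactly, closedness already gives $u\in H^2$, so the $H^2$-estimate is correct but superfluous.
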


\begin{remark}
	\label{remarkAperp}
	The operator $\mathcal{A}_\perp$ is nothing but the restriction of $\mathcal{A}_0$ onto $D(A_\perp) \subset \langle \Theta_0 \rangle^\perp$. 
\end{remark}

\begin{proof}
    This is a direct consequence of Remarks \ref{remarkA0} and \ref{remarkAperp} and the fact that $D(\mathcal{A}_\perp) \subset D(\mathcal{A}_0)$ is closed with respect to the graph norm induced by $\mathcal{A}_0$.  $D(\mathcal{A}_\perp)$ equipped with the graph norm is thus complete,  and $\mathcal{A}_\perp:D(\mathcal{A}_\perp) \to Z$ continuous.
\end{proof}

\begin{lemma}\label{lemma_D(A)_dense}
    The domain $D(\cA_\perp) = H^2_\perp(\R) \times H^1_\perp(\R)$ is a dense subset of $H^1_\perp(\R)\times L^2_\perp(\R)$.
\end{lemma}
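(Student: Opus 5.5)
Since $D(\cA_\perp)$ and $Z$ are products, it suffices to prove density factor by factor: $H^2_\perp(\R)$ dense in $H^1_\perp(\R)$ in the $H^1$-norm, and $H^1_\perp(\R)$ dense in $L^2_\perp(\R)$ in the $L^2$-norm. By Lemma \ref{lemma_X_hilbert} the $Z$-norm is equivalent to the $H^1\times L^2$ norm on $Z$, so we may work with the standard norms. The plan is to approximate without the orthogonality constraint, using the classical density of $H^2$ in $H^1$ and of $H^1$ in $L^2$, and then restore orthogonality to $\partial_x\theta_0$ by a projection that is continuous in the relevant topologies.

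Concretely, define $P w := w - \nld{\partial_x\theta_0}^{-2}\pld{w}{\partial_x\theta_0}\,\partial_x\theta_0$. By Proposition \ref{propNeelw}(c), $\partial_x\theta_0\in H^k(\R)$ for every $k\ge 0$, in particular $\partial_x\theta_0\in H^2(\R)$. Hence $P$ maps $H^k$ into $H^k_\perp$ for $k=0,1,2$. It is bounded on each $H^k$, since
\[
\norm{Pw}_{H^k}\le \norm{w}_{H^k}+\nld{\partial_x\theta_0}^{-1}\norm{w}_{L^2}\,\norm{\partial_x\theta_0}_{H^k}\lesssim \norm{w}_{H^k},
\]
and $P$ acts as the identity on $H^k_\perp$. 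Now let $u\in H^1_\perp$ and pick $u_n\in H^2$ (e.g.\ mollifications or Schwartz functions) with $u_n\to u$ in $H^1$. Then $Pu_n\in H^2_\perp$ and $\norm{Pu_n-u}_{H^1}=\norm{P(u_n-u)}_{H^1}\lesssim\norm{u_n-u}_{H^1}\to0$. The same argument with $v\in L^2_\perp$ and $v_n\in H^1$, $v_n\to v$ in $L^2$, gives $Pv_n\in H^1_\perp$ with $Pv_n\to v$ in $L^2$. Combining the two approximations, $(Pu_n,Pv_n)\in D(\cA_\perp)$ converges to $(u,v)$ in $Z$.

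There is no real obstacle here. The only point needing care is that the correction term stays in the smaller space, which is exactly where the regularity $\partial_x\theta_0\in H^2$ from Proposition \ref{propNeelw}(c) is used, together with the norm equivalence on $Z$ from Lemma \ref{lemma_X_hilbert}.
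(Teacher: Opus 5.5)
Your proof is correct and follows the paper's route: use density of the unconstrained domain $D(\cA_0)=H^2\times H^1$ in $H^1\times L^2$, pass to the orthogonal complement of $\partial_x\theta_0$, and invoke the norm equivalence on $Z$. Your projection $P$ makes explicit the step the paper leaves tacit, namely that intersecting the dense set with the kernel of the continuous map $(u,v)\mapsto(\langle u,\partial_x\theta_0\rangle_{L^2},\langle v,\partial_x\theta_0\rangle_{L^2})$ preserves density, because the correction direction $\partial_x\theta_0$ itself lies in $H^2$.
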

\begin{proof}
   $D(\mathcal{A}_\perp)$ coincides with $D(\mathcal{A}_0)\cap F^{-1}(\{0\})$ for the bounded linear functional $F:H^1(\R)\times L^2(\R) \to \R^2$, $(u,v) \mapsto (\skalprod{u}{\theta_0'}_{L^2},\skalprod{v}{\theta_0'}_{L^2})$. Density of $D(\mathcal{A}_0)$ in $H^1(\R)\times L^2(\R)$ (see remark \ref{remarkA0}) and equivalence of the norms $\norm{(u,v)}_Z$ and $\norm{u}_{H^1} + \norm{v}_{L^2}$ on this very subspace (see Proposition \ref{propspecL0}) yield the claim. 
\end{proof}

\begin{lemma}\label{lemma_A_resolventset}
    The positive real line is contained in the resolvent set of $\cA_\perp$, i.e. 
    \begin{align*}
        (0,\infty) \subset \rho(\cA_\perp).
    \end{align*}
\end{lemma}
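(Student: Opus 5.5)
The plan is to solve the resolvent equation explicitly. Fix $\lambda>0$ and $(f,g)\in Z$; I would find a unique $(u,v)\in D(\cA_\perp)$ with $(\cA_\perp-\lambda)(u,v)=(f,g)$ and bound $\|(u,v)\|_Z$ by $\|(f,g)\|_Z$. The system reads
\[
v-\lambda u = f, \qquad -\mathcal{L}_0 u - (\nu+\lambda) v = g .
\]
Substituting $v=\lambda u+f$ into the second equation gives the scalar problem
\[
(\mathcal{L}_0+\mu)\,u = -h, \qquad \mu:=\lambda(\lambda+\nu)>0, \qquad h:=g+(\lambda+\nu)f\in L^2_\perp .
\]

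\emph{Solving the scalar problem and orthogonality.} By Proposition \ref{propspecL0} (a)--(b), $\mathcal{L}_0$ is self-adjoint with $\sigma(\mathcal{L}_0)\subset\{0\}\cup[\Lambda_0,\infty)$. Hence $-\mu\in\rho(\mathcal{L}_0)$, and $u:=-(\mathcal{L}_0+\mu)^{-1}h\in D(\mathcal{L}_0)=H^2$ is the unique solution in $H^2$. For orthogonality I use self-adjointness together with $\mathcal{L}_0\partial_x\theta_0=0$:
\[
\mu\langle u,\partial_x\theta_0\rangle_{L^2}=\langle(\mathcal{L}_0+\mu)u,\partial_x\theta_0\rangle_{L^2}=-\langle h,\partial_x\theta_0\rangle_{L^2}=0 .
\]
So $u\in H^2_\perp$. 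Then $v=\lambda u+f$ lies in $H^1$, because $u\in H^2$ and $f\in H^1$, and it is orthogonal to $\partial_x\theta_0$ as a combination of orthogonal functions. Thus $(u,v)\in D(\cA_\perp)$ and $\cA_\perp-\lambda$ is onto. Injectivity follows because any solution with $(f,g)=0$ forces $(\mathcal{L}_0+\mu)u=0$, hence $u=0$ and $v=0$.

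\emph{Bounding the inverse.} Pairing $(\mathcal{L}_0+\mu)u=-h$ with $u$ gives
\[
\langle u,\mathcal{L}_0u\rangle_{L^2}+\mu\|u\|_{L^2}^2\le\|h\|_{L^2}\|u\|_{L^2}.
\]
Dropping the first (nonnegative) term yields $\|u\|_{L^2}\le\mu^{-1}\|h\|_{L^2}$. Inserting this back gives $\langle u,\mathcal{L}_0u\rangle_{L^2}\le\mu^{-1}\|h\|_{L^2}^2$, which is $\|u\|_{H^1}^2$ up to constants by Lemma \ref{lemma_X_hilbert}. Moreover
\[
\|h\|_{L^2}\le\|g\|_{L^2}+(\lambda+\nu)\|f\|_{L^2}\lesssim_\lambda\|(f,g)\|_Z, \qquad \|v\|_{L^2}\le\lambda\|u\|_{L^2}+\|f\|_{L^2}.
\]
Together these give $\|(\cA_\perp-\lambda)^{-1}(f,g)\|_Z\le C(\lambda,\nu)\|(f,g)\|_Z$, so $\lambda\in\rho(\cA_\perp)$.

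The only delicate point is checking that the solution stays in the orthogonal complement and that $H^2$-regularity of $u$ holds with $v\in H^1$. Both are handled above, by the self-adjointness identity and by $D(\mathcal{L}_0)=H^2$. Closedness of $\cA_\perp$ (Lemma \ref{lemma_A_closed}) ensures the resolvent notion is well-posed; boundedness of the inverse could alternatively be deduced from the closed graph theorem once bijectivity is known.
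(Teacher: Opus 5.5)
Your proposal is correct and follows the paper's proof: both reduce the resolvent system to the scalar equation $(\mathcal{L}_0+\lambda(\lambda+\nu))u=\pm\,(g+(\lambda+\nu)f)$, invert it using self-adjointness of $\mathcal{L}_0$ together with $-\lambda(\lambda+\nu)\in\rho(\mathcal{L}_0)$, and then recover $v$ from the first equation. Your explicit check that $u\perp\partial_x\theta_0$ (via $\mathcal{L}_0\partial_x\theta_0=0$) and your crude bound $C(\lambda,\nu)$ on the inverse supply details that the paper only asserts here or postpones to Lemma \ref{lemma_A_resolvent_operator}, where the sharp bound $\lambda^{-1}$ is proved.
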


\begin{proof}
    Let $\lambda > 0$ be any positive number, $(f,g) \in Z$ and examine the equation
    \begin{align}
        \label{resolvent_equation}
        (\lambda I - \cA_\perp)(u,v) = (f,g).
    \end{align}
    We want to show that equation (\ref{resolvent_equation}) admits a unique solution $(u,v) \in D(\cA_\perp)$.
    Equation (\ref{resolvent_equation}) corresponds to the system of equations
    \begin{gather}
        \lambda u - v = f, \quad \text{and} \label{first_equation} \\
        (\lambda + \nu)v + \mathcal{L}_0 u = g. \label{second_equation}
    \end{gather}
    Inserting (\ref{first_equation}) into (\ref{second_equation}), we obtain 
    \begin{align}
        \label{inserted_equation}
        [(\lambda + \nu)\lambda I + \mathcal{L}_0] u = g + (\lambda + \nu)f.
    \end{align}
    Since $\nu,\lambda > 0$ and $f,g \in L^2_\perp$, there does indeed exist a unique solution $u \in H^2_\perp(\R)$ to the above equation: $\mathcal{L}_0$ is self-adjoint and $\skalprod{u}{\mathcal{L}_0 u}_{L^2} \geq \Lambda_0\norm{u}_{H^1(\R)}^2$ for $u \in H^2_\perp(\R)$. Thus, the spectrum of $-\mathcal{L}_0$ is real and bounded from above by $-\Lambda_0 < 0$ (see Proposition \ref{propspecL0}). This spectral bound is negative, implying $0 < (\lambda(\lambda + \nu)) \in \rho(-\mathcal{L}_0)$. $u\in H^2_\perp$ is thus uniquely determined by $g$ and $f$. Thus, $v = \lambda u - f \in H^1_\perp(\R)$ is also uniquely defined. Hence, the resolvent set of $\cA_\perp$ satisfies $(0,\infty) \subset \rho(\cA_\perp)$, as claimed.
\end{proof}

\begin{lemma}\label{lemma_A_resolvent_operator}
    Let $\lambda > 0$. The resolvent operators $\cR(\lambda; \cA_\perp) := (\lambda I - \cA_\perp)^{-1}$ satisfy the bound 
    \begin{align*}
        \norm{\cR(\lambda; \cA_\perp)}_{Z \to Z} \leq \lambda^{-1}
    \end{align*}
    in operator norm.
\end{lemma}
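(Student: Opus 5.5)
We show that $\cA_\perp$ is dissipative with respect to the energy inner product $\skalprod{\cdot}{\cdot}_Z$ of Lemma \ref{lemma_X_hilbert}. The resolvent bound then follows from a one-line Cauchy--Schwarz argument. By Lemma \ref{lemma_A_resolventset}, for every $\lambda>0$ and $(f,g)\in Z$ there is a unique $(u,v)\in D(\cA_\perp)=H^2_\perp\times H^1_\perp$ with $(\lambda I-\cA_\perp)(u,v)=(f,g)$. It therefore suffices to prove
\[
\norm{(u,v)}_Z \le \lambda^{-1}\norm{(f,g)}_Z .
\]

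\textbf{Step 1 (dissipativity).} For $(u,v)\in D(\cA_\perp)$, which are real-valued functions since $Z$ is a real space, we compute
\[
\skalprod{\cA_\perp(u,v)}{(u,v)}_Z=\skalprod{v}{\cL_0 u}_{L^2}+\skalprod{-\cL_0u-\nu v}{v}_{L^2} = -\nu\norm{v}_{L^2}^2\le 0 .
\]
This uses the symmetry of $\cL_0$ from Proposition \ref{propspecL0}(a). Since $u\in H^2$ and $v\in H^1$, the expression $\skalprod{v}{\cL_0u}_{L^2}$ is well defined. It agrees with the $Z$-pairing $\skalprod{v}{\cL_0 u}$, which is formally defined through the quadratic form on $H^1$, because $\cL_0 u\in L^2$.

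The one point needing care is that the first component $v$ of $\cA_\perp(u,v)$ lies in $H^1_\perp$. Only then is its $Z$-inner product with $u$ given by $\skalprod{v}{\cL_0 u}_{L^2}$. This holds because $v\in H^1_\perp$ by the definition of the domain. Similarly, $\cL_0u\perp\partial_x\theta_0$ by self-adjointness and $\cL_0\partial_x\theta_0=0$, so $\cA_\perp$ indeed maps into $Z$.

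\textbf{Step 2 (resolvent estimate).} Pair the resolvent equation with $(u,v)$ in $Z$:
\[
\lambda\norm{(u,v)}_Z^2 - \skalprod{\cA_\perp(u,v)}{(u,v)}_Z = \skalprod{(f,g)}{(u,v)}_Z \le \norm{(f,g)}_Z\norm{(u,v)}_Z .
\]
By Step 1 the left-hand side is at least $\lambda\norm{(u,v)}_Z^2$. Dividing by $\norm{(u,v)}_Z$, and noting that the case $(u,v)=0$ is trivial, gives $\norm{(u,v)}_Z\le\lambda^{-1}\norm{(f,g)}_Z$. This is the claimed bound $\norm{\cR(\lambda;\cA_\perp)}_{Z\to Z}\le\lambda^{-1}$.

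The main obstacle is only the bookkeeping in Step 1: making sure the $Z$-inner product, which involves $\cL_0$ acting on the first component, is applied to functions regular enough that $\skalprod{v}{\cL_0 u}_{L^2}=\skalprod{\cL_0 v}{u}$ is meaningful in the form sense. This is guaranteed by $u\in H^2$, $v\in H^1$, and the symmetry of the closed form of $\cL_0$ on $H^1$ from Remark \ref{remarkA0}. Everything else follows from Lemmata \ref{lemma_X_hilbert} and \ref{lemma_A_resolventset}.
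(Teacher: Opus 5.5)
Your proof is correct and is essentially the paper's argument. The paper tests the second resolvent equation with $v$, substitutes $v=\lambda u-f$, applies Cauchy--Schwarz with respect to $\skalprod{\cdot}{\cdot}_Z$ and discards $\nu\norm{v}_{L^2}^2\geq 0$; this is exactly your identity $\lambda\norm{(u,v)}_Z^2+\nu\norm{v}_{L^2}^2=\skalprod{(f,g)}{(u,v)}_Z$ written out componentwise, so phrasing it as dissipativity of $\cA_\perp$ is only cosmetic (note that the cancellation in your Step 1 needs only the symmetry of the real $L^2$ pairing, while the symmetry of $\cL_0$ is what makes $\skalprod{\cdot}{\cdot}_Z$ an inner product for the Cauchy--Schwarz step).
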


\begin{proof}
    Testing equation (\ref{second_equation}) by $v$ yields
    \begin{align}
        \label{multiplied_equation}
        (\lambda + \nu)\norm{v}_{L^2}^2 + \skalprod{v}{\mathcal{L}_0 u}_{L^2} = \skalprod{v}{g}_{L^2}.
    \end{align}
    Inserting (\ref{first_equation}) into (\ref{multiplied_equation}), we obtain
    \begin{gather}
        (\lambda + \nu)\norm{v}_{L^2}^2 + \lambda\skalprod{u}{\mathcal{L}_0 u}_{L^2} = \skalprod{v}{g}_{L^2} + \skalprod{f}{\mathcal{L}_0 u}_{L^2} \\
        \leq (\norm{g}_{L^2}^2 + \skalprod{f}{\mathcal{L}_0 f}_{L^2})^{1/2}(\norm{v}_{L^2}^2 + \skalprod{u}{\mathcal{L}_0 u}_{L^2})^{1/2}, \nonumber
    \end{gather}
    where we used the Cauchy-Schwarz inequality and added non-negative terms inside the square roots. Since $\nu \norm{v}^2_{L^2} > 0$, we obtain 
    \begin{align*}\label{resolvent_A_bound}
        \lambda \left(\norm{v}^2_{L^2} + \skalprod{\mathcal{L}_0u}{u}_{L^2} \right) \leq (\norm{g}_{L^2}^2 + \skalprod{f}{\mathcal{L}_0 f}_{L^2})^{1/2}(\norm{v}_{L^2}^2 + \skalprod{u}{\mathcal{L}_0 u}_{L^2})^{1/2}.
    \end{align*}
    By $\cR(\lambda;\cA_\perp)(f,g) = (u,v)$, we may express inequality (\ref{resolvent_A_bound}) as
    \begin{align}
        \lambda \norm{R(\lambda;A)(f,g)}_Z \leq \norm{(f,g)}_Z.
    \end{align}
    Thus, $\norm{\cR(\lambda;\cA_\perp)} \leq \frac{1}{\lambda}$ in the operator norm.
\end{proof}

\begin{corollary}\label{lemma_A_generator}
    The operator $\cA_\perp : D(\cA_\perp) \to Z$ is the generator of a strongly continuous semigroup of contractions $\tcS(t) :Z \to Z$.
\end{corollary}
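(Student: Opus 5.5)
This follows directly from the Hille--Yosida theorem in its contraction form. That theorem says a linear operator on a Banach space generates a strongly continuous semigroup of contractions if and only if three conditions hold: the operator is closed, it is densely defined, and $(0,\infty)$ lies in its resolvent set with $\|\cR(\lambda;\cdot)\| \leq \lambda^{-1}$ for all $\lambda>0$. Each of these has already been established for $\cA_\perp$ on $Z$. The plan is to verify them in turn and then apply the theorem.

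First, $Z$ is a (real) Hilbert space by Lemma \ref{lemma_X_hilbert}, and hence a Banach space, so the setting is appropriate. Closedness of $\cA_\perp$ is Lemma \ref{lemma_A_closed}. Density of $D(\cA_\perp)=H^2_\perp\times H^1_\perp$ in $Z$ is Lemma \ref{lemma_D(A)_dense}. Note that density must hold in the $Z$-norm; this is where the equivalence of $\|\cdot\|_Z$ with the $H^1\times L^2$ norm (Proposition \ref{propspecL0}(d)) is used.

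Second, Lemma \ref{lemma_A_resolventset} gives $(0,\infty)\subset\rho(\cA_\perp)$. Lemma \ref{lemma_A_resolvent_operator} gives the bound $\|\cR(\lambda;\cA_\perp)\|_{Z\to Z}\le \lambda^{-1}$ for every $\lambda>0$. Only the first power of the resolvent needs to be estimated, since in the contraction case powers follow automatically: $\|\cR(\lambda;\cA_\perp)^n\|\le\lambda^{-n}$.

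With these facts, the Hille--Yosida theorem (e.g.\ Pazy, Theorem 1.3.1) yields a $C_0$-semigroup $\{\tcS(t)\}_{t\ge0}$ on $Z$ with $\|\tcS(t)\|_{Z\to Z}\le 1$ and generator $\cA_\perp$.

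There is no real obstacle here. The one point to check is that the resolvent estimate was proved in the norm of $Z$ itself, namely $\|(u,v)\|_Z^2=\langle u,\cL_0u\rangle_{L^2}+\|v\|_{L^2}^2$, and not merely in an equivalent norm. Contractivity depends on the choice of norm, and the proof of Lemma \ref{lemma_A_resolvent_operator} indeed works directly with $\|\cdot\|_Z$. Alternatively, one can check dissipativity, $\langle \cA_\perp(u,v),(u,v)\rangle_Z=-\nu\|v\|_{L^2}^2\le 0$, and combine it with the range condition from Lemma \ref{lemma_A_resolventset} to invoke the Lumer--Phillips theorem. This gives the same conclusion.
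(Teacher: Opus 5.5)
Your proposal is correct and is essentially the paper's proof. The paper likewise just invokes the Hille--Yosida theorem, with closedness, density, $(0,\infty)\subset\rho(\cA_\perp)$ and $\|\cR(\lambda;\cA_\perp)\|_{Z\to Z}\le\lambda^{-1}$ supplied by Lemmas \ref{lemma_A_closed}--\ref{lemma_A_resolvent_operator}. Your remark that the resolvent bound holds in the $Z$-norm itself, and your Lumer--Phillips alternative via $\langle \cA_\perp(u,v),(u,v)\rangle_Z=-\nu\|v\|_{L^2}^2$, are both accurate.
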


\begin{proof}
    The proof is just an application of Hille-Yosida, the requirements of which were proven in the preceding lemmata.
\end{proof}

\begin{corollary}
\label{corollary_homogeneous_sol}
    The unique solution $u \in (C^2_tL^2_\perp \cap C^1_tH^1_\perp \cap C^0_t H^2_\perp)(Q_T)$ to the Cauchy-problem 
    \begin{align}\label{equation_homogeneous}
        \partial_t^2 u + \nu \partial_t u + \mathcal{L}_0 u = 0,
    \end{align}
    with initial data $u(0) = u_0 \in H^1_\perp(\R),\partial_t u(0) = \partial_t u_0 \in L^2_\perp(\R)$ is given by $\tcS(t)(u_0,\partial_t u_0)$.
\end{corollary}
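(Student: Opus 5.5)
The plan is to derive the corollary from standard $C_0$-semigroup theory applied to $\cA_\perp$ (Corollary \ref{lemma_A_generator}). The second-order equation \eqref{equation_homogeneous} is equivalent to the first-order system $\partial_t U = \cA_\perp U$ for $U = (u, \partial_t u)$, with $U(0) = (u_0, \partial_t u_0) \in Z$.

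\textbf{Existence.} For $U_0 \in D(\cA_\perp) = H^2_\perp \times H^1_\perp$, the orbit $U(t) = \tcS(t)U_0$ is a classical solution. Concretely, $U \in C^1([0,\infty); Z) \cap C^0([0,\infty); D(\cA_\perp))$, with the domain carrying its graph norm, and $\dot U = \cA_\perp U = \tcS(t)\cA_\perp U_0$. I will then read off regularity componentwise.
\begin{itemize}
\item The first component satisfies $u \in C^0_t H^2_\perp$ and $\partial_t u = v \in C^0_t H^1_\perp$; both use that the $Z$-norm is equivalent to the $H^1 \times L^2$ norm (Lemma \ref{lemma_X_hilbert}).
\item The identity $\partial_t v = -\cL_0 u - \nu v$ together with $u \in C^0_t H^2$ gives $\partial_t^2 u = \partial_t v \in C^0_t L^2_\perp$.
\end{itemize}
This yields $u \in C^2_t L^2_\perp \cap C^1_t H^1_\perp \cap C^0_t H^2_\perp$, and the second row of the system is exactly \eqref{equation_homogeneous}. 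Orthogonality to $\partial_x\theta_0$ is preserved automatically because $\tcS(t)$ maps $Z$ into itself.

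\textbf{Uniqueness.} Let $u$ be any solution in the stated class. Then $W(t) := (u(t), \partial_t u(t))$ lies in $C^1(Z) \cap C^0(D(\cA_\perp))$ and solves $\dot W = \cA_\perp W$. The standard argument is to differentiate $s \mapsto \tcS(t-s)W(s)$ on $[0,t]$; its derivative is $-\tcS(t-s)\cA_\perp W(s) + \tcS(t-s)\dot W(s) = 0$. Hence $W(t) = \tcS(t)W(0)$. An alternative is an energy argument: for the difference of two solutions, $\tfrac{d}{dt}\|W\|_Z^2 = -2\nu\|\partial_t u\|^2_{L^2} \le 0$, using the self-adjointness of $\cL_0$ from Proposition \ref{propspecL0}.

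\textbf{Main obstacle: data only in $Z$.} The statement allows data $(u_0, \partial_t u_0) \in H^1_\perp \times L^2_\perp$ but claims $C^0_t H^2$ regularity, which is only available when the data lie in $D(\cA_\perp)$. For general data I would interpret $\tcS(t)(u_0, \partial_t u_0)$ as the mild solution. This is the limit in $C^0_t Z$ of the classical solutions obtained from approximating data in $D(\cA_\perp)$, which exist by the density result, Lemma \ref{lemma_D(A)_dense}. The contraction property makes this limit unique and independent of the approximation. I expect the write-up to need exactly this distinction: the regularity claims hold for data in $D(\cA_\perp)$, and the mild formulation covers the rest.
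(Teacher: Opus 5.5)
Your proposal is correct and follows the route the paper leaves implicit: the corollary is stated without proof as an immediate consequence of Corollary~\ref{lemma_A_generator}. It rests on the standard semigroup fact that $\tcS(t)U_0$ is the unique classical solution of $\dot U=\cA_\perp U$ whenever $U_0\in D(\cA_\perp)$, which is exactly your existence and uniqueness argument. Your caveat correctly repairs an imprecision in the statement: the $C^0_tH^2_\perp$ regularity requires data in $D(\cA_\perp)$, general data in $Z$ give only a mild solution, and time ranges over $[0,\infty)$ rather than the torus $\mathbb{T}_T$. One small correction: reading off $u\in C^0_tH^2$ uses the equivalence of the graph norm of $\cA_\perp$ with the $H^2\times H^1$ norm, via $\|\partial_x^2u\|_{L^2}\le\|\cL_0u\|_{L^2}+C\|u\|_{H^1}$, not merely the equivalence of the $Z$-norm with the $H^1\times L^2$ norm.
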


\begin{lemma}[hypocoercivity of $\cA_\perp$]
    \label{exponential_decay}
    Let $u \in (C^2_tL^2_\perp \cap C^1_tH^1_\perp \cap C^0_tH^2_\perp) (Q_T)$ be the unique solution in Corollary \ref{corollary_homogeneous_sol}. Then $u$ satisfies the inequality
    \begin{align}
        \label{hypocoercive}
        \norm{(u(t),\partial_t u(t))}_Z^2 \lesssim e^{-Ct}\norm{(u_0,\partial_t u_0)}_{Z}^2,
    \end{align}
    for $C > 0$ depending only on $\nu$ and the coercivity constant of $\mathcal{L}_0$.
\end{lemma}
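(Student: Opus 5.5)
We prove the exponential decay \eqref{hypocoercive} with a modified energy (Lyapunov) functional. The plain energy is
\[
E(t) := \norm{(u,\partial_t u)}_Z^2 = \skalprod{u}{\cL_0 u}_{L^2} + \norm{\partial_t u}_{L^2}^2 .
\]
Differentiating along \eqref{equation_homogeneous}, and using that $\cL_0$ is self-adjoint, gives $\tfrac{d}{dt}E = -2\nu\norm{\partial_t u}_{L^2}^2$. This dissipates only the velocity component, so it does not control $E$ by itself; this is the hypocoercive defect. To repair it we add a small cross term and set
\[
E_\delta(t) := E(t) + 2\delta \skalprod{u}{\partial_t u}_{L^2},
\]
with $\delta>0$ to be chosen. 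All computations will first be carried out for data in $D(\cA_\perp)$, where $u$ has the regularity stated in Corollary \ref{corollary_homogeneous_sol} and the differentiations are justified. The general case then follows by density (Lemma \ref{lemma_D(A)_dense}) and the contractivity of $\tcS(t)$ (Corollary \ref{lemma_A_generator}).

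First we show that $E_\delta$ is equivalent to $E$. Since $u(t)\in H^1_\perp$, Proposition \ref{propspecL0}(d) gives $\norm{u}_{L^2}^2 \le \Lambda_0^{-1}\skalprod{u}{\cL_0 u}_{L^2}$. Cauchy--Schwarz then yields
\[
2\delta\abs{\skalprod{u}{\partial_t u}_{L^2}} \le \delta\big(\Lambda_0^{-1/2}\skalprod{u}{\cL_0u}_{L^2} + \Lambda_0^{1/2}\norm{\partial_t u}_{L^2}^2\big).
\]
Hence $\tfrac12 E \le E_\delta \le \tfrac32 E$ as soon as $\delta \le \tfrac12\min\{\Lambda_0^{1/2},\Lambda_0^{-1/2}\}$.

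Next we differentiate $E_\delta$. Since $\tfrac{d}{dt}\skalprod{u}{\partial_t u} = \norm{\partial_t u}^2 + \skalprod{u}{\partial_t^2 u}$ and $\partial_t^2 u = -\nu\partial_t u - \cL_0 u$, we get
\[
\tfrac{d}{dt}E_\delta = -(2\nu - 2\delta)\norm{\partial_t u}_{L^2}^2 - 2\delta\skalprod{u}{\cL_0 u}_{L^2} - 2\delta\nu\skalprod{u}{\partial_t u}_{L^2}.
\]
The mixed term is absorbed by Young's inequality together with the Poincaré-type bound above:
\[
2\delta\nu\abs{\skalprod{u}{\partial_t u}} \le \delta\skalprod{u}{\cL_0u} + \delta\nu^2\Lambda_0^{-1}\norm{\partial_t u}^2 .
\]
This gives
\[
\tfrac{d}{dt}E_\delta \le -\big(2\nu - 2\delta - \delta\nu^2\Lambda_0^{-1}\big)\norm{\partial_t u}^2 - \delta\skalprod{u}{\cL_0u}.
\]
Choosing $\delta$ small enough, depending only on $\nu$ and $\Lambda_0$, that the first bracket exceeds $\nu$, we obtain $\tfrac{d}{dt}E_\delta \le -\min\{\nu,\delta\}E \le -\tfrac23\min\{\nu,\delta\}E_\delta$. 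Gronwall's inequality combined with the equivalence $\tfrac12E\le E_\delta\le\tfrac32E$ then gives \eqref{hypocoercive} with $C=\tfrac23\min\{\nu,\delta\}$.

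The only delicate point is justifying these differentiations rigorously. We need $\tfrac{d}{dt}\skalprod{u}{\cL_0 u} = 2\skalprod{\partial_t u}{\cL_0 u}$, which requires $u\in C^1_tH^1$ and $\cL_0 u\in C^0_tL^2$. For strong solutions these hold, since $D(\cL_0)=H^2$ and $u\in C^0_tH^2_\perp$; one can also interpret $\skalprod{\cdot}{\cL_0\cdot}$ through its bounded form on $H^1$. Orthogonality to $\partial_x\theta_0$ is preserved in time because $\tcS$ acts on $Z$, so the coercivity estimate from Proposition \ref{propspecL0}(d) applies at every $t$. This is exactly where the restriction to $\langle\Theta_0\rangle^\perp$ is essential.
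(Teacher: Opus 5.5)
Your argument is correct and is essentially the paper's: the paper uses the modified energy $f=E+\nu\skalprod{u}{\partial_t u}_{L^2}+\tfrac{\nu^2}{2}\norm{u}_{L^2}^2$ (your $E_\delta$ with $\delta=\nu/2$ plus a $\tfrac{\nu^2}{2}\norm{u}_{L^2}^2$ term). For this choice $f'=-\nu E$ holds exactly and $2f\ge E$ is a sum of squares, so only the upper bound $f\lesssim E$ needs Young's inequality and coercivity, whereas you take $\delta$ small and absorb the mixed terms. One cosmetic slip: the Cauchy--Schwarz display in your equivalence step should read $2\delta\abs{\skalprod{u}{\partial_t u}_{L^2}}\le\delta\Lambda_0^{-1/2}E$ (as written it needs $\Lambda_0\ge 1$), but your restriction on $\delta$ already forces $\delta\le\tfrac12\Lambda_0^{1/2}$, so $\tfrac12E\le E_\delta\le\tfrac32E$ still holds.
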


\begin{remark}\label{remark_hypocoercivity_S_norm}
    We may equivalently state, that $\tcS(t)$ satisfies the bound 
    \begin{align*}
        \big\| \tcS(t)\big\|_{Z \to Z} \lesssim e^{-C t},
    \end{align*}
    with a new uniform constant $C$.
\end{remark}

\begin{proof}[Proof of Lemma \ref{exponential_decay}]
    Multiplying equation (\ref{equation_homogeneous}) by $2 \partial_t u$ and by $\nu u$ we obtain the two equalities
    \begin{equation}\label{f_g_1}
    \begin{aligned}
        0 = & 2\skalprod{\partial_t u}{\partial_t^2 u}_{L^2}  + 2\nu \norm{\partial_t u}^2_{L^2} + 2\skalprod{\partial_t u}{\mathcal{L}_0 u}_{L^2} \\
        = & \frac{d}{dt}(\norm{\partial_t u}_{L^2} + \skalprod{u}{\mathcal{L}_0 u}_{L^2}) + 2\nu \norm{\partial_t u}^2_{L^2}
    \end{aligned}  
    \end{equation}
    and
    \begin{equation}\label{f_g_2}
    \begin{aligned}
        0 &= \nu\skalprod{u}{\partial_t^2 u}_{L^2}  + \nu^2 \skalprod{u}{\partial_t u}_{L^2} + \nu\skalprod{u}{\mathcal{L}_0 u}_{L^2} \\
        &=  \frac{d}{dt}(\frac{\nu^2}{2}\norm{u}^2_{L^2} + \nu \skalprod{u}{\partial_t u}_{L^2}) - \nu \norm{\partial_t u}^2_{L^2} + \nu \skalprod{u}{\mathcal{L}_0 u}_{L^2}. 
    \end{aligned}  
    \end{equation}
    Define $g(t) := \nu(\norm{\partial_t u}^2_{L^2} + \skalprod{u}{\mathcal{L}_0 u}_{L^2})$ and $f(t) := \norm{\partial_t u}^2_{L^2} + \skalprod{u}{\mathcal{L}_0 u}_{L^2} + \frac{\nu^2}{2}\norm{u}^2_{L^2} + \nu \skalprod{u}{\partial_t u}_{L^2}$. Then adding (\ref{f_g_1}) and (\ref{f_g_2}), we obtain 
        \begin{align}
        \label{groenewall}
        f'(t) = -g(t).
    \end{align}
    $f$ may be estimated from above by using Young's inequality and the coercivity of $\mathcal{L}_0$:
    \begin{equation}\label{f_estimate}
    \begin{aligned}
        f(t) \leq & \norm{\partial_t u}^2_{L^2} + (1 + c)\nu^2\skalprod{u}{\mathcal{L}_0 u}_{L^2} 
        \leq  C^{-1}g(t).
    \end{aligned} 
    \end{equation}
    Inserting (\ref{f_estimate}) into (\ref{groenewall}) yields the differential inequality
    \begin{align*}
        \frac{df}{dt} = -g(t) \leq -C f(t).
    \end{align*}
    By Gronwall's lemma, $f$ is bounded from above by the solution to $dv/dt = -C v$, which yields
    \begin{align*}
        f(t) \leq f(0)\exp\Big(-\int_0^t C \: ds\Big) = f(0)e^{-C t}.
    \end{align*}
    Since 
    \begin{align*}
        \norm{(u,\partial_tu)}_Z^2 \leq \skalprod{u}{\mathcal{L}_0 u}_{L^2}  + \norm{\partial_t u}^2_{L^2} + \norm{\partial_t u + \nu u}^2_{L^2} =2 f,
    \end{align*}
    and 
    \begin{align*}
        f(0) \lesssim g(0) = \nu\norm{u(0),\partial_t u(0)}_Z^2,
    \end{align*}
    as seen in (\ref{f_estimate}), our proof is complete.
\end{proof}

\begin{lemma}\label{lemma_S_H1_bound}
    Let $u$ be the solution obtained in Lemma \ref{exponential_decay} with initial data $u(0) = u_0\in H^2_\perp, \partial_t u(0) = \partial_tu_0 \in H^1_\perp$. Then it holds 
    \begin{align*}
        \norm{(\partial_x u,\partial_t \partial_x u)}_Z^2 \lesssim (1 + t)e^{-C t}(\norm{(u_0,\partial_t u_0)}_Z + \norm{(\partial_x u_0, \partial_t \partial_x u_0)}_Z).
    \end{align*}
\end{lemma}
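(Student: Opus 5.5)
We differentiate the equation in $x$ and treat the result as an inhomogeneous version of \eqref{equation_homogeneous}, with the commutator of $\partial_x$ and $\mathcal{L}_0$ acting as a source.

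Set $w := \partial_x u$. Differentiating \eqref{equation_homogeneous} in $x$ gives
\begin{align*}
\partial_t^2 w + \nu \partial_t w + \mathcal{L}_0 w = -\llb \partial_x, \mathcal{L}_0 \rrb u =: F(t).
\end{align*}
Since $\partial_x$ commutes with $-\partial_x^2$ and with $(-\Delta)^{1/2}$, the commutator only involves $x$-derivatives of the coefficients $\sin\theta_0$ and $c_{\theta_0}$:
\[
\llb \partial_x, \mathcal{L}_0 \rrb u = (\sin\theta_0)'(1+(-\Delta)^{1/2})(u\sin\theta_0) + \sin\theta_0(1+(-\Delta)^{1/2})(u(\sin\theta_0)') - c_{\theta_0}' u .
\]
By Proposition \ref{propNeelw} (c),(e), $\theta_0'\in H^k$ and $\theta_0 \in W^{2,\infty}$. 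I will need $c_{\theta_0}'\in L^\infty$, which should follow from the smoothness of $\theta_0$ since $\cos\theta_0 \in H^k$. With this, $F$ is a first-order operator applied to $u$:
\[
\|F(t)\|_{L^2} \lesssim \|u(t)\|_{H^1} \lesssim \|(u(t),\partial_t u(t))\|_Z .
\]
By Lemma \ref{exponential_decay}, this is $\lesssim e^{-Ct/2}\|(u_0,\partial_tu_0)\|_Z$.

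The difficulty is that $w$ and $F$ are not in $L^2_\perp$, so the semigroup $\tcS$ on $Z$ does not apply directly. Let $P$ be the $L^2$-orthogonal projection onto $\mathrm{span}\{\partial_x\theta_0\}$, and decompose $w = w_\perp + a(t)\partial_x\theta_0$ with $a = \langle w,\partial_x\theta_0\rangle/\|\partial_x\theta_0\|^2$. Since $\mathcal{L}_0$ is self-adjoint with kernel spanned by $\partial_x\theta_0$ (Proposition \ref{propspecL0}), it commutes with $P$, and the equation splits into two parts.

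The component along the kernel satisfies $\ddot a + \nu \dot a = \langle F,\partial_x\theta_0\rangle/\|\partial_x\theta_0\|^2$. Its right-hand side is better than it looks. Integrating by parts, $a = -\langle u,\partial_x^2\theta_0\rangle/\|\partial_x\theta_0\|^2$, so $|a|+|\dot a| \lesssim \|u\|_{L^2}+\|\partial_tu\|_{L^2}$, which decays exponentially directly by Lemma \ref{exponential_decay}. This avoids the possible linear growth of $a$ that the ODE alone would allow.

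The transverse component satisfies $\partial_t^2 w_\perp + \nu\partial_t w_\perp + \mathcal{L}_0 w_\perp = (I-P)F \in L^2_\perp$. Duhamel's formula with the semigroup $\tcS$ from Corollary \ref{lemma_A_generator}, together with Remark \ref{remark_hypocoercivity_S_norm}, gives
\[
\|(w_\perp,\partial_t w_\perp)(t)\|_Z \lesssim e^{-Ct}\|(w_\perp,\partial_tw_\perp)(0)\|_Z + \int_0^t e^{-C(t-s)}\|F(s)\|_{L^2}\,ds .
\]
Inserting the decay of $F$, and taking the smaller of the two exponential rates as the common $C$, the integral is $\lesssim t e^{-Ct}\|(u_0,\partial_tu_0)\|_Z$. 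This convolution of two exponentials is the source of the factor $(1+t)$.

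To justify Duhamel's formula I need enough regularity. The data $(w_\perp,\partial_tw_\perp)(0)$ lie in $H^1\times L^2$ because $u_0\in H^2$ and $\partial_tu_0\in H^1$. I also need $F\in C^0_tL^2_\perp$ after projection, which follows from $u\in C^0_tH^1$. Recombining $w = w_\perp + a\,\partial_x\theta_0$, using that the $Z$-norm is equivalent to the $H^1\times L^2$-norm, and bounding $\|(w_\perp,\partial_tw_\perp)(0)\|_Z$ by $\|(\partial_xu_0,\partial_t\partial_xu_0)\|_Z+\|(u_0,\partial_tu_0)\|_Z$ yields the claim (with the norms squared as stated, after squaring).

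\textbf{Main obstacle.} I expect the hardest step to be the commutator estimate $\|F\|_{L^2}\lesssim\|u\|_{H^1}$. It requires careful handling of the nonlocal term $(1+(-\Delta)^{1/2})$ acting on products with $(\sin\theta_0)'$, and of $c_{\theta_0}'$; the natural tools are the $H^1$ boundedness of multiplication by $W^{1,\infty}$ functions and the bound $\|(-\Delta)^{1/2} g\|_{L^2}\le\|g\|_{H^1}$.
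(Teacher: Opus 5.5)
Your argument is correct and follows the same route as the paper. You differentiate in $x$, treat $\llb \partial_x,\mathcal{L}_0\rrb u$ as a source controlled by $\|u\|_{H^1}$, apply Duhamel's formula with $\tcS$, and get the factor $(1+t)$ from the convolution of two exponentials.

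The genuine difference is your splitting $w = w_\perp + a\,\partial_x\theta_0$. The paper applies $\tcS$ directly to $(\partial_x u,\partial_t\partial_x u)$ and asserts that $\cJ$ maps $H^2_\perp$ into $H^1_\perp$. In general, however, neither $\partial_x u$ nor $\cJ u$ is orthogonal to $\partial_x\theta_0$. For instance, $\langle \partial_x u,\partial_x\theta_0\rangle_{L^2} = -\langle u,\partial_x^2\theta_0\rangle_{L^2}$, which is nonzero for the admissible odd datum $u_0=\partial_x^2\theta_0$, so $(\partial_x u,\partial_t\partial_x u)$ does not lie in $Z$. Your projection, together with the direct control $|a|+|\dot a|\lesssim \|u\|_{L^2}+\|\partial_t u\|_{L^2}$ obtained by integrating by parts, is what makes the semigroup argument on $Z$ legitimate. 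Likewise, your estimate $\|\cJ u\|_{L^2}\lesssim \|u\|_{H^1}$ is the mapping property actually needed, rather than the one the paper states.

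Two minor remarks.
\begin{itemize}
\item The differentiated equation holds only in $H^{-1}$. Your regularity check therefore shows that the Duhamel mild solution exists, but not yet that it coincides with $(w_\perp,\partial_t w_\perp)$. Closing this needs a weak-solution uniqueness argument (Ball's characterization of mild solutions) or a density argument from smoother data; the paper is equally informal on this point.
\item What both proofs establish is the homogeneous bound $\|(\partial_x u,\partial_t\partial_x u)(t)\|_Z\lesssim(1+t)e^{-Ct}\big(\|(u_0,\partial_tu_0)\|_Z+\|(\partial_xu_0,\partial_t\partial_xu_0)\|_Z\big)$. The statement as displayed, with a square only on the left, fails by scaling. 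Squaring your bound puts squares on the right-hand side too, so your closing parenthetical does not produce the displayed form.
\end{itemize}
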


\begin{proof}
    Since $(u_0,\partial_tu_0)^{\top} \in D(\cA_\perp)$, so is $\tcS(t)(u_0,\partial_tu_0)^{\top}$.
    Formally differentiating equation \eqref{equation_homogeneous} with respect to space and setting $v = \partial_x u$ yields the equation 
    \begin{align*}
        \partial_t^2v + \nu \partial_t v + \mathcal{L}_0 v + \cJ u = 0,
    \end{align*}
    where the operator $\cJ$ denotes the commutator
    \begin{align*}
        \cJ = \llb \partial_x, \mathcal{L}_0 \rrb.
    \end{align*}
    The equation translates into the system 
    \begin{align*}
        \partial_t  \begin{pmatrix} v \\ w \end{pmatrix} = \cA_\perp \begin{pmatrix} v \\ w \end{pmatrix} + \begin{pmatrix}
            0 \\
            \cJ u
        \end{pmatrix},
    \end{align*}
    where, notably, the operator $\cJ :H^2_\perp \to H^1_\perp$ is bounded. Thus, via Duhamel's formula, a mild solution is given by 
    \begin{align*}
        (v,w)^{\top}(t) = \tcS(t)(v,w)^\top + \int_0^t \tcS(t-s)(0,\cJ u)^\top \: ds.
    \end{align*}
    Then using Remark \ref{remark_hypocoercivity_S_norm}, we obtain 
    \begin{align*}
        \norm{(v,w)(t)}_Z \lesssim & \norm{(v_0,w_0)}_Z e^{-C t} + \int_0^t e^{-C (t-s)}\norm{(u(s),\partial_t u(s))}_Z \: ds \\
        \lesssim & \norm{(v_0,w_0)}_Z e^{-C t} + \norm{(u_0,\partial_t u_0)}_Z te^{-C t},
    \end{align*}
    yielding the result.
\end{proof}

\begin{corollary}\label{corollary_S_H1}
    The operator $\tcS (t) : D(\cA_\perp) \to D(\cA_\perp)$ is a bounded operator with bound 
    \begin{align*}
        \big\|\tcS(t)\big\|_{D(\cA_\perp) \to D(\cA_\perp)} \lesssim (1 + t)e^{-C t}.
    \end{align*}
\end{corollary}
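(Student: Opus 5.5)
The plan is to read the statement as a bound for $\tcS(t)$ in the graph norm of $\cA_\perp$ on $D(\cA_\perp)=H^2_\perp\times H^1_\perp$. Since $\|\cdot\|_Z$ is equivalent to the $H^1\times L^2$ norm (Lemma \ref{lemma_X_hilbert}), we may use the norm
\[
\|(u,v)\|_{D(\cA_\perp)} \simeq \|(u,v)\|_Z + \|(\partial_x u,\partial_x v)\|_Z .
\]
This is equivalent to $\|u\|_{H^2}+\|v\|_{H^1}$ up to the projection issue addressed below.

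\textbf{Invariance.} First we record that $D(\cA_\perp)$ is invariant under $\tcS(t)$. This is standard semigroup theory: the generator domain is invariant under the semigroup it generates, by Corollary \ref{lemma_A_generator}. For data $(u_0,\partial_t u_0)\in D(\cA_\perp)$, the trajectory $(u,\partial_t u)(t)=\tcS(t)(u_0,\partial_tu_0)$ is a classical solution of \eqref{equation_homogeneous}.

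\textbf{Decay estimate.} We then combine two bounds. Lemma \ref{exponential_decay} (equivalently Remark \ref{remark_hypocoercivity_S_norm}) gives $\|(u,\partial_tu)(t)\|_Z\lesssim e^{-Ct}\|(u_0,\partial_tu_0)\|_Z$. Lemma \ref{lemma_S_H1_bound} bounds the $Z$-norm of the spatial derivatives by $(1+t)e^{-Ct}$ times the full initial graph norm; a square root may be needed to match the powers, which only changes $C$. Adding the two bounds gives
\[
\|\tcS(t)(u_0,\partial_tu_0)\|_{D(\cA_\perp)}\lesssim (1+t)e^{-Ct}\,\|(u_0,\partial_tu_0)\|_{D(\cA_\perp)},
\]
with $C$ possibly renamed. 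This is exactly the claimed operator bound.

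\textbf{Main obstacle.} The delicate point is that $(\partial_x u,\partial_x\partial_t u)$ need not lie in $Z$: $\partial_x u$ is generally not orthogonal to $\partial_x\theta_0$. Hence the use of $\tcS$ in Duhamel's formula inside Lemma \ref{lemma_S_H1_bound} must be justified. I would fix this by decomposing $\partial_x u = P_\perp\partial_x u + \alpha(t)\,\partial_x\theta_0$, where $P_\perp$ is the $L^2$-orthogonal projection onto $L^2_\perp$.

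The coefficient $\alpha(t)=\langle \partial_x u,\partial_x\theta_0\rangle/\|\partial_x\theta_0\|^2 = -\langle u,\partial_x^2\theta_0\rangle/\|\partial_x\theta_0\|^2$ is bounded by $\|u\|_{L^2}$, using Proposition \ref{propNeelw}(c). It therefore decays like $e^{-Ct}$ by Lemma \ref{exponential_decay}.

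The projected part $P_\perp \partial_x u$ solves the projected equation with forcing $P_\perp\cJ u$ plus terms in $\alpha,\dot\alpha,\ddot\alpha$ times $P_\perp\cL_0\partial_x\theta_0=0$. Here $\cL_0$ commutes with $P_\perp$, being self-adjoint with kernel $\partial_x\theta_0$. The Duhamel argument then applies within $Z$. The boundedness of $\cJ=\llb\partial_x,\cL_0\rrb$ from $H^2$ to $H^1$ follows from the smoothness of $\theta_0$ (Proposition \ref{propNeelw}(c),(e)), since $\cJ$ involves only derivatives of the coefficients $\sin\theta_0$ and $c_{\theta_0}$.

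Finally, the full norms $\|u\|_{H^2}$ and $\|\partial_t u\|_{H^1}$ are controlled by $\|u\|_{H^1}+\|\partial_xu\|_{H^1}$ and its time-derivative analogue. Together with the above, this closes the estimate.
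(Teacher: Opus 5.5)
Your proposal is correct and follows the paper's route: the corollary is just Remark \ref{remark_hypocoercivity_S_norm} combined with Lemma \ref{lemma_S_H1_bound} (whose proof yields the unsquared bound directly), with invariance of $D(\cA_\perp)$ under $\tcS(t)$ taken from standard semigroup theory. Your splitting $\partial_x u = P_\perp\partial_x u + \alpha(t)\,\partial_x\theta_0$, using $|\alpha|\lesssim\|u\|_{L^2}$ and $P_\perp\cL_0=\cL_0P_\perp$, validly repairs a point the paper's Duhamel step in Lemma \ref{lemma_S_H1_bound} skips, namely that $(\partial_x u,\partial_x\partial_t u)\notin Z$; note only that the estimate closes because $\cJ$ is first order, i.e.\ bounded from $H^1$ to $L^2$, not merely from $H^2$ to $H^1$.
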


\begin{corollary}\label{corollary_int_welldefined}
    Let $(u,v)^{\top} \in C^0(\R;D(\cA_\perp))$ be bounded. Then 
    \[
    \int_{-\infty}^\gamma \tcS(\gamma-s)(u,v)^{\top}(s) \: ds \in D(\cA_\perp),
    \]
    is well-defined for all $\gamma \in \R$.
\end{corollary}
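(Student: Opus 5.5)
The plan is to show that the improper integral converges absolutely in the Banach space $D(\cA_\perp)$, equipped with its graph norm, which is equivalent to the $H^2\times H^1$ norm on this subspace. Its value then lies in $D(\cA_\perp)$ automatically, since that space is complete: it is a closed subspace of $D(\cA_0)$ with the graph norm, by Lemma \ref{lemma_A_closed}.

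First I will check that the integrand is Bochner measurable in $D(\cA_\perp)$. Set $U(s):=(u,v)^\top(s)$ and $M:=\sup_{s}\|U(s)\|_{D(\cA_\perp)}<\infty$. By Corollary \ref{corollary_S_H1}, $\tcS(t)$ restricts to a bounded operator on $D(\cA_\perp)$. Moreover, $\tcS$ commutes with $\cA_\perp$ on $D(\cA_\perp)$, so strong continuity of $\tcS$ on $Z$ transfers to strong continuity of the restriction with respect to the graph norm. Concretely, for $w\in D(\cA_\perp)$ we have
\[
\|\tcS(t)w-\tcS(t')w\|_{D(\cA_\perp)}\lesssim \|\tcS(t)w-\tcS(t')w\|_Z+\|\tcS(t)\cA_\perp w-\tcS(t')\cA_\perp w\|_Z .
\]
Combining this with the local uniform bound from Corollary \ref{corollary_S_H1} and the continuity of $U$, the map $s\mapsto \tcS(\gamma-s)U(s)$ is continuous from $(-\infty,\gamma]$ into $D(\cA_\perp)$. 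Each integral $\int_{R}^{\gamma}$ over a compact interval is therefore a well-defined Riemann/Bochner integral with values in $D(\cA_\perp)$.

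Next comes the tail estimate. By Corollary \ref{corollary_S_H1},
\[
\int_{-\infty}^{\gamma}\big\|\tcS(\gamma-s)U(s)\big\|_{D(\cA_\perp)}\,ds\lesssim M\int_0^\infty (1+\tau)e^{-C\tau}\,d\tau<\infty .
\]
Hence the truncated integrals $\int_R^\gamma$ form a Cauchy family in $D(\cA_\perp)$ as $R\to-\infty$. They converge to an element of $D(\cA_\perp)$, with norm bounded by $M$ times a constant independent of $\gamma$. The same limit is obtained in $Z$, since the graph norm dominates the $Z$-norm, so the expression is consistent with the integral taken in $Z$.

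The only delicate step is the strong continuity of $\tcS$ in the graph norm. This is needed because Corollary \ref{corollary_S_H1} gives only boundedness of $\tcS$ on $D(\cA_\perp)$, not continuity in $t$. I will handle it through the commutation $\cA_\perp\tcS(t)w=\tcS(t)\cA_\perp w$ for $w\in D(\cA_\perp)$, which is standard semigroup theory and reduces the question to continuity in $Z$. If one prefers to avoid that, an alternative is to integrate in $Z$ and then pass $\cA_\perp$ inside the integral using closedness of $\cA_\perp$ (Lemma \ref{lemma_A_closed}). This works because $\cA_\perp\tcS(\gamma-s)U(s)=\tcS(\gamma-s)\cA_\perp U(s)$ is integrable in $Z$ by Remark \ref{remark_hypocoercivity_S_norm}.
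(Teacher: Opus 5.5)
Your proposal is correct and follows the paper's argument. The paper also bounds the integrand in the graph norm by a constant times $(1+\gamma-s)e^{-C(\gamma-s)}\sup_{r}\|(u,v)(r)\|_{D(\cA_\perp)}$ via Corollary \ref{corollary_S_H1} (the factor $(1+s)$ printed there is a typo) and concludes by completeness of $D(\cA_\perp)$. Your extra step, continuity of the integrand into $D(\cA_\perp)$ via $\cA_\perp\tcS(t)w=\tcS(t)\cA_\perp w$, supplies the measurability the paper leaves implicit, and the same commutation already gives $\|\tcS(t)\|_{D(\cA_\perp)\to D(\cA_\perp)}\le\|\tcS(t)\|_{Z\to Z}$ in the graph norm, so the appeal to Corollary \ref{corollary_S_H1} could even be dropped.
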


\begin{proof}
    Using Corollary \ref{corollary_S_H1}, we may estimate 
    \begin{align*}
        \norm{\int_{-\infty}^\gamma \tcS(\gamma-s)(u(s),v(s))^{\top} \:ds}_{D(\cA_\perp)} \lesssim \int_{-\infty}^\gamma (1+s)e^{-C (\gamma - s)}\: ds \; \sup_{r \in \R}\norm{(u,v)(r)}_{D(\cA_\perp)}.
    \end{align*}
    Since $D(\cA_\perp)$ is complete equipped with the graph norm, the claim follows.
\end{proof}

\begin{lemma}\label{lemma_periodic_int}
    Let $(u,v) \in C^0(\R;D(\cA_\perp))$ be $T$-periodic. Then so is the mapping 
    \[
    t \mapsto \int_{-\infty}^t \tcS(t-s)(u,v)^{\top}(s) \: ds.
    \]
\end{lemma}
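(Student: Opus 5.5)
The plan is a change of variables $s = r + T$ in the integral, followed by the $T$-periodicity of the integrand. Set
\[
\Phi(t) := \int_{-\infty}^t \tcS(t-s)(u,v)^{\top}(s)\, ds .
\]
By Corollary \ref{corollary_int_welldefined} this is a well-defined element of $D(\cA_\perp)$ for every $t \in \R$, since $(u,v)$ is continuous and periodic and hence bounded in $D(\cA_\perp)$. I will work with the integral as an improper Bochner integral in $Z$, and also in $D(\cA_\perp)$ with its graph norm. It converges absolutely by Remark \ref{remark_hypocoercivity_S_norm} and Corollary \ref{corollary_S_H1}: the integrand is bounded by a constant times $(1+(t-s))e^{-C(t-s)}$, which is integrable on $(-\infty,t]$.

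The key computation is
\[
\Phi(t+T) = \int_{-\infty}^{t+T} \tcS(t+T-s)(u,v)^{\top}(s)\, ds = \int_{-\infty}^{t} \tcS(t-r)(u,v)^{\top}(r+T)\, dr = \int_{-\infty}^{t} \tcS(t-r)(u,v)^{\top}(r)\, dr = \Phi(t),
\]
obtained by substituting $s = r + T$ and then using $(u,v)(r+T) = (u,v)(r)$. To justify the substitution I will first write each integral as the limit $\lim_{M\to\infty}\int_{-M}^{t}$. On a compact interval the integrand $r \mapsto \tcS(t-r)(u,v)^{\top}(r)$ is continuous into $Z$: this follows from strong continuity of $\tcS$ together with its uniform boundedness on compact time sets, via the usual $\varepsilon/3$ argument. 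Consequently the translation formula for Riemann/Bochner integrals applies on $[-M,t]$, and passing $M \to \infty$ is legitimate by the absolute convergence established above.

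The only step needing any care is this justification of the change of variables for the improper vector-valued integral, namely continuity of the integrand and control of the tail. Both follow directly from the exponential bounds already established, so I do not expect a genuine obstacle. If desired, I can also note that $\Phi$ is continuous in $t$, again by dominated convergence with the same exponential majorant, so that $\Phi \in C^0(\R; D(\cA_\perp))$ is $T$-periodic. This is the form in which the lemma will be used for the construction of periodic solutions in Theorem \ref{dynamical_linearized}.
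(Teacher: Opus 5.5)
Your proposal is correct and matches the paper's argument: the paper substitutes $r = T + t - s$, passes through $\int_0^\infty \tcS(r)(u,v)^{\top}(T+t-r)\,dr$, uses periodicity, and substitutes back, which amounts to your single translation $s = r + T$. Your justification of the improper Bochner integral (continuity of the integrand on compact intervals plus the majorant $(1+(t-s))e^{-C(t-s)}$ for the tail) is a detail the paper leaves implicit.
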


\begin{proof}
    The proof is a simple calculation. Using the transformation $r = T + t - s$ we obtain
    \begin{align*}
        \int_{-\infty}^{T+t}\tcS(T+t - s)(u,v)^{\top}(s) \: ds = \int_0^\infty \tcS(r)(u,v)^{\top}(T + t - r) \: dr.
    \end{align*}
    Now using periodicity of $(u,v)$ and reapplying the transformation $s = t-r$, we obtain 
    \begin{align*}
        \int_0^\infty \tcS(r)(u,v)^{\top}(T + t - r) \: dr = \int_{-\infty}^t \tcS(t-s)(u,v)^{\top}(s) \: ds.
    \end{align*}
\end{proof}

We now have gathered all the tools needed to prove Theorem \ref{dynamical_linearized}.

\begin{proof}[Proof of Theorem \ref{dynamical_linearized}]
    By $H^k(\R) = H^k_\perp(\R) \oplus \text{span} \{ \partial_x \theta_0 \}$, we may split the initial data into the orthogonal parts $u_0 = u_{0,\perp} + \lambda \partial_x \theta_0$. By linearity of the equation, we may then solve the problem for each of the initial values $u_0^1 = u_{0,\perp}$ and $u_0^2 = \lambda \partial_x \theta_0$. For initial value $u_0^1$ we may reformulate the dynamical problem as the abstract inhomogeneous Cauchy-problem
    \begin{align}\label{abstract_cauchy}
        \partial_t(u,v)^{\top} = \cA_\perp(u,v)^{\top} + (0,f)^{\top},
    \end{align}
    with underlying Hilbert space $Z$ and domain $D(\cA_\perp)$.
    By Corollary \ref{corollary_int_welldefined} and Lemma \ref{lemma_periodic_int}, the unique $T$-periodic mild solution is given by $t \mapsto \int_{-\infty}^t \tcS(t-s) (0,f)^{\top}(s) \: ds$. Moreover, this solution is a classical one, since the ``initial value'' is 
    \[
    \int_{-\infty}^0 \tcS(-s)(0,f(s))^{\top} \: ds \in D(\cA_\perp). 
    \]
    The other problem is then given by the equation 
    \begin{align*}
        \ddot \lambda + \nu \dot \lambda = 0,
    \end{align*}
    since $\mathcal{L}_0 \partial_x \theta_0 = 0$ (here $\dot{\lambda} = d\lambda / dt$). $\dot\lambda$ is thus given by $\dot\lambda(t) = \dot\lambda(0)e^{-\nu t}$. The only periodic solutions are constant functions $\lambda(t) = \lambda(0)$, concluding our proof.
    \end{proof}


    \begin{proposition}\label{T_0_closed}
        The operator $\cT_0$ from Definition \ref{defspaces} is a closed operator.
        In particular, $D(\cT_0)$ equipped with the graph norm is a Banach space.
    \end{proposition}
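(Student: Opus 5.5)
To show that $\cT_0$ is closed, I take a sequence $(u_n) \subset D(\cT_0)$ with $u_n \to u$ and $\cT_0 u_n \to f$. The first convergence is in $C^0_tH^1_\perp(Q_T)$, the ambient space of $\cT_0$. The second convergence is in $C^0_tH^1_\perp(Q_T)$ as well. The goal is to prove that $u \in D(\cT_0)$ and $\cT_0 u = f$.

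The strategy is to recover a priori control of the full graph-norm quantities of $u_n$ from $\cT_0 u_n$ and $u_n$ alone. These quantities are the norms in $C^0_tH^2$, $C^1_tH^1$ and $C^2_tL^2$, together with the norm of $(\partial_t^2-\partial_x^2)u_n$ in $C^0_tH^1$. I obtain this control from the periodic solution formula of Theorem \ref{dynamical_linearized}.

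Since $u_n$ is $T$-periodic and $\perp$-valued, the uniqueness part of Theorem \ref{dynamical_linearized} applies. Uniqueness holds up to multiples of $\partial_x\theta_0$, and such multiples are excluded by orthogonality. Hence
\[
(u_n,\partial_t u_n)^\top(t)=\int_{-\infty}^t \tcS(t-s)(0,f_n(s))^\top\,ds, \qquad f_n:=\cT_0u_n .
\]
Applying the same formula to the differences $u_n-u_m$ gives uniform-in-$t$ bounds in $Z$ and, through Corollary \ref{corollary_S_H1}, in $D(\cA_\perp)$. The bounds are linear in $\sup_t\|f_n-f_m\|_{H^1}$. I would justify this by the same estimate as in Corollary \ref{corollary_int_welldefined}, with one change: $(0,f)$ lies only in $Z_1:=H^1_\perp\times L^2_\perp$ shifted one derivative, i.e.\ $(0,f)\in H^2_\perp\times H^1_\perp$ requires only $f\in H^1_\perp$. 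Indeed, $(0,f(s))\in D(\cA_\perp)$ precisely when $f(s)\in H^1_\perp$, and this is exactly the hypothesis. The resulting bound is $\|(u,\partial_tu)\|_{C^0_t D(\cA_\perp)}\lesssim \|f\|_{C^0_tH^1}$.

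With this bound, $(u_n)$ is Cauchy in $C^0_tH^2_\perp$ and $(\partial_t u_n)$ is Cauchy in $C^0_tH^1_\perp$, so $u_n$ is Cauchy in $C^1_tH^1_\perp$. Writing
\[
\partial_t^2u_n = f_n-\nu\partial_tu_n-\cL_0u_n ,
\]
and using that $\cL_0 : H^2\to L^2$ is bounded, I get that $\partial_t^2u_n$ is Cauchy in $C^0_tL^2$. Next, $\cS_{\theta_0}$ maps $H^2\to H^1$ boundedly, because $\sin\theta_0\in W^{2,\infty}$ by Proposition \ref{propNeelw}(e). Also $c_{\theta_0}$ is a bounded $W^{1,\infty}$ multiplier. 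It follows that
\[
(\partial_t^2-\partial_x^2)u_n=f_n-\nu\partial_tu_n-\cS_{\theta_0}u_n+c_{\theta_0}u_n
\]
is Cauchy in $C^0_tH^1$. Orthogonality to $\partial_x\theta_0$ passes to all these limits, since it is a closed condition. Completeness of the spaces $C^k(\mathbb{T}_T;H^l)$ then yields limits for each of these quantities. Identifying them with the corresponding derivatives of $u$ is standard: uniform convergence of $u_n$ and of its time derivatives implies convergence of the derivatives to those of the limit. This gives $u\in D(\cT_0)$ and $\cT_0u=\lim f_n=f$.

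The main obstacle is the regularity claim for the periodic Duhamel integral. Specifically, one must check that the $C^0_tD(\cA_\perp)$ bound and the continuity in $t$ truly hold when the forcing is only continuous with values in $D(\cA_\perp)$. I expect this to follow from Corollary \ref{corollary_S_H1} and the strong continuity of $\tcS$ on $D(\cA_\perp)$; the latter holds because $\tcS$ commutes with $\cA_\perp$. A second point to verify is the mapping property $c_{\theta_0}\in W^{1,\infty}$, which uses $\partial_x\cos\theta_0\in H^1$ and the boundedness of $(-\Delta)^{1/2}$ from $H^2$ to $H^1$.
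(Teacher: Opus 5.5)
Your proof is correct. It uses the same mechanism as the paper: the periodic Duhamel representation $(u_n,\partial_tu_n)(t)=\int_{-\infty}^t\tcS(t-s)(0,f_n(s))^\top\,ds$, together with the exponential decay of $\tcS$. The organization, however, is different.

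The paper assumes $u_k\to u$ in $C^2_tL^2\cap C^1_tH^1\cap C^0_tH^2$ and only has to identify the limit. It takes the periodic solution $\tilde u$ with forcing $y$ from Theorem \ref{dynamical_linearized}, so the regularity of the limit is inherited from that theorem. It then shows $u=\tilde u$ via the cheap $Z$-estimate $\sup_t\|u_k(t)-\tilde u(t)\|_{H^1}\lesssim\sup_s\|y_k(s)-y(s)\|_{L^2}$. This identification would also work if $u_k\to u$ only in $C^0_tH^1$, so your weaker hypothesis is not what really separates the two arguments.

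You instead apply the $C^0_tD(\cA_\perp)$ bound to the differences $u_n-u_m$ and construct the limit by completeness. This costs the full $D(\cA_\perp)$ estimate rather than just the $Z$-estimate. What it buys is the a priori bound
\[
\|u\|_{C^0_tH^2}+\|u\|_{C^1_tH^1}+\|u\|_{C^2_tL^2}+\|(\partial_t^2-\partial_x^2)u\|_{C^0_tH^1}\lesssim\|\cT_0u\|_{C^0_tH^1},\qquad u\in D(\cT_0).
\]
That is, $\cT_0:D(\cT_0)\to C^0_tH^1_\perp(Q_T)$ has a bounded inverse, and all natural graph norms on $D(\cT_0)$ are equivalent. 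This is precisely what is needed later to turn $\|\chi\|_{D(\cT_0)}=O(\vep)$ into $\|\chi\|_{C^0_tH^2_x}=O(\vep)$ in Lemma \ref{lemperL} and Corollary \ref{corperA}.

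You also spell out the mapping properties of $\cS_{\theta_0}$ and $c_{\theta_0}$ behind $(\partial_t^2-\partial_x^2)u\in C^0_tH^1$. The paper uses these tacitly when it asserts $\tilde u\in D(\cT_0)$.

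One recommendation: derive the $D(\cA_\perp)$ bound from your commutation remark rather than by citing Corollary \ref{corollary_S_H1}. By Remark \ref{remark_hypocoercivity_S_norm}, $\|\cA_\perp\tcS(t)U\|_Z=\|\tcS(t)\cA_\perp U\|_Z\lesssim e^{-Ct}\|\cA_\perp U\|_Z$. The reason to avoid Corollary \ref{corollary_S_H1} is that the proof of Lemma \ref{lemma_S_H1_bound} feeds $(\partial_xu,\partial_t\partial_xu)$ into $\tcS$, although this pair need not be orthogonal to $\partial_x\theta_0$.
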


    \begin{proof}
        We examine an arbitrary sequence $(u_k)_{k \in \N} \subset (C^2_tL^2_\perp \cap C^1_tH^1_\perp \cap C^0_t H^2_\perp)(Q_T)$ such that 
        \begin{gather*}
            u_k \to u \in (C^2_tL^2_\perp \cap C^1_tH^1_\perp \cap C^0_t H^2_\perp)(Q_T), \\
            \cT_0 u_k \to y \in C^0_tH^1_\perp(Q_T).
        \end{gather*}
        Then there exists a unique solution 
        \[
        \tilde{u} := [\int_{-\infty}^t \tcS(t-s)(0,y(s))^{\top} \: ds]_1 \in (C^2_tL^2_\perp \cap C^1_tH^1_\perp \cap C^0_t H^2_\perp)(Q_T) 
        \]
        (here $[ \cdot]_1$ denotes the first component of a vector-valued function), solving $\cT_0 \tilde{u} = y$ (see Theorem \ref{dynamical_linearized}).
        We need to show that $u = \tilde{u}$.
        Setting $y_k := \cT_0 u_k$, again by Theorem \ref{dynamical_linearized} we know that $u_k = [\int_{-\infty}^t \tcS(t-s)(0,y_k(s))^{\top} \, ds]_1$. Then 
        \begin{align}
            \sup_{t \in \R}\norm{u_k(t) - \tilde{u}(t)}_{H^1} \lesssim & \sup_{t \in \R}\norm{\int_{-\infty}^t \tcS(t-s)(0,y_k(s) - y(s))^{\top} \, ds}_Z  \label{estimate_T_closed} \\
            \lesssim & \sup_{s \in \R}\norm{y_k(s) - y(s)}_{L^2} \to 0,
        \end{align}
        for $k \to \infty$ by convergence of $y_k \to y$ in $(C^0_tH^1_\perp)(Q_T)$. Therefore, $u_k \to \tilde{u}$ in $C^0_tH^1_\perp(Q_T)$. By uniqueness of the limit, it holds that $u = \tilde{u} \in D(\cT_0)$ and $\cT_0 u = y$. This concludes our proof.
    \end{proof}

    \subsection{The nonlinear problem} We now examine the nonlinear problem. The existence result will be an application of the implicit function theorem in the appropriate (periodic) function spaces. We start by defining the $T/2$-symmetric (and hence $T$-periodic) function spaces suitable for our needs.

    \begin{definition}
        Let ${\widehat{\pi}}$ be the linear operator on the set of measurable functions in two variables given by 
        \begin{align*}
            {\widehat{\pi}}f(x,t) := -f(-x,t + T/2).
        \end{align*}
        Then we define the subspaces of $T/2$-symmetric functions $C^k_{\widehat{\pi}},D(\cT_0)_{\widehat{\pi}}$ and $C^0_tH^1_x(Q_T)_{\widehat{\pi}}$ by
        \begin{align*}
            C^2_{\widehat{\pi}}:= & \{X \in C^2(\bb{T}_T) \big | X(t) = {\widehat{\pi}}X(t)\},\\
            D(\cT_0)_{\widehat{\pi}} := & \{\chi \in D(\cT_0) \big| \chi(x,t)  = {\widehat{\pi}}\chi(x,t)\}
        \end{align*}
        and 
        \begin{align*}
            C^0_tH^1_x(Q_T)_{\widehat{\pi}} := \{f \in C^0_tH^1_x(Q_T) \big| f(x,t) = {\widehat{\pi}}\chi(x,t)\}.
        \end{align*}
    \end{definition}

    \begin{definition}\label{definition_G}
        Define the nonlinear operator $\cG : D(\cT_0)_{\widehat{\pi}}\times C^2_{\widehat{\pi}}\times \R\to C^0_tH^1_x(Q_T)$ by
        \begin{align*}
            {\cG}(\chi,X,\varep) := \partial_t^2 \theta(x,t) + \nu \partial_t \theta(x,t) + \nabla \mathcal{E}(\theta)(x,t) - \varep H(t) \cos(\theta(x,t)),
        \end{align*}
        where 
        \begin{align*}
            \theta(x,t) = \theta_0(x + X(t)) + \chi(x,t).
        \end{align*}
        If ${\cG}(\chi,X,\varep) = 0$ then $\theta$ is a solution to the reduced LLG dynamics \eqref{LLG-H}.
    \end{definition}

    \begin{lemma}\label{lemma_G_symmetry}
        The range of the operator ${\cG}$ is a subset of $C^0_tH^1_x(\R)_{\widehat{\pi}}$.
    \end{lemma}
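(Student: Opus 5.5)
The claim is that $\cG(\chi,X,\varep)$ is again $\widehat\pi$-invariant, i.e.\ $\widehat\pi\,\cG(\chi,X,\varep)=\cG(\chi,X,\varep)$, whenever $\chi=\widehat\pi\chi$ and $X$ is $T/2$-symmetric. (Here the defining relation of $C^0_tH^1_x(Q_T)_{\widehat\pi}$ is read as $f=\widehat\pi f$.) The plan is to prove this term by term. We take $\cG$ mapping into $C^0_tH^1_x(Q_T)$ as given by Definition \ref{definition_G}, so only the symmetry needs checking.

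\textbf{Step 1: the phase $\theta$ is symmetric.} Set $\theta(x,t)=\theta_0(x+X(t))+\chi(x,t)$. By Proposition \ref{propNeelw}, $\theta_0$ is odd, and by assumption $X(t+T/2)=-X(t)$. Hence
\[
-\theta_0(-x+X(t+T/2))=-\theta_0(-(x+X(t)))=\theta_0(x+X(t)).
\]
Adding $\chi=\widehat\pi\chi$ gives $\widehat\pi\theta=\theta$, that is, $\theta(-x,t+T/2)=-\theta(x,t)$.

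\textbf{Step 2: the map $\theta\mapsto\nabla\cE(\theta)$ commutes with $\widehat\pi$.} The operator $\widehat\pi$ is a reflection in $x$ composed with a time shift, followed by a sign change. The time-derivative terms commute with it trivially: with $\tau_{T/2}$ the time shift, $\partial_t^k\widehat\pi=\widehat\pi\partial_t^k$. For the spatial part, use the Euler--Lagrange expression
\[
\nabla\cE(\theta)=-\partial_x^2\theta-\sin\theta\,(1+(-\Delta)^{1/2})\cos\theta .
\]
Let $R$ denote the reflection $Rf(x)=f(-x)$. Then $\partial_x^2$ commutes with $R$, and so does $(-\Delta)^{1/2}$, because its Fourier symbol $|\xi|$ is even. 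Since $\cos$ is even and $\sin$ is odd, we get $\cos(-\theta(-x,\cdot))=\cos\theta(-x,\cdot)$ and $\sin(-\theta(-x,\cdot))=-\sin\theta(-x,\cdot)$. Therefore $\nabla\cE(\widehat\pi\theta)=\widehat\pi\,\nabla\cE(\theta)$, and the left-hand side of $\cG$ is $\widehat\pi$-invariant once $\theta$ is.

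\textbf{Step 3: the forcing term.} We have $\varep H(t+T/2)\cos\theta(-x,t+T/2)=-\varep H(t)\cos(-\theta(x,t))=-\varep H(t)\cos\theta(x,t)$, using \eqref{T2symm} and the evenness of $\cos$. Applying the extra minus sign in $\widehat\pi$ gives $\widehat\pi(\varep H\cos\theta)=\varep H\cos\theta$.

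Combining Steps 1--3 yields $\widehat\pi\,\cG=\cG$, which is the claim. The only step that needs care is Step 2, specifically the nonlocal commutation of $(-\Delta)^{1/2}$ with reflection. One subtlety is that $\cos\theta$ belongs to $H^1$ while $\theta$ itself does not, so this commutation should be applied to $\cos\theta$ rather than to $\theta$.
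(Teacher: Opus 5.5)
Your proof is correct and follows the same route as the paper: first show $\widehat{\pi}\theta=\theta$ from the oddness of $\theta_0$ and the $T/2$-symmetry of $X$ and $\chi$, then check that every term of $\cG$ is equivariant under $\widehat{\pi}$. Your Steps 2 and 3 spell out the ``simple calculation'' that the paper leaves to the reader, namely the evenness of the symbol $|\xi|$, the parity of $\sin$ and $\cos$, and $H(t+T/2)=-H(t)$.
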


    \begin{proof}
        As $\theta_0$ is odd we find 
        \[
        \begin{aligned}
            \theta(x,t) &= \theta_0(x + X(t)) + \chi(x,t) \\
            &=-\theta_0(-x + X(t + T/2)) -\chi(-x,t+T/2) \\
            &= -\theta(-x,t+T/2).
        \end{aligned}
        \]
        A simple calculation shows that
        \[
        \partial_t^2 {\widehat{\pi}}f + \nu \partial_t {\widehat{\pi}}f + \nabla \mathcal{E}({\widehat{\pi}}f) - \varep H(t) \cos({\widehat{\pi}}f) = {\widehat{\pi}}(\partial_t^2 f + \nu \partial_t f + \nabla \mathcal{E}(f) - \varep H(t) \cos(f)).
        \]
        Thus, ${\cG}$ satisfies ${\widehat{\pi}}{\cG} = {\cG}$, as claimed.
    \end{proof}

    \begin{lemma}\label{lemma_G_differentiable}
        The operator ${\cG}$ is continuously Fréchet-differentiable.
    \end{lemma}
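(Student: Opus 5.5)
We split $\cG$ into a linear part and nonlinear Nemytskii-type parts and check $C^1$ regularity of each. Write $\theta = \theta_0(\cdot + X(t)) + \chi$ and expand
\[
\cG(\chi,X,\varep) = \cT_0\chi + \big[\nabla\cE(\theta) - \nabla\cE(\theta_0(\cdot+X)) - \cL_0\chi\big] + \big[\ddot X + \nu\dot X\big]\partial_x\theta_0(\cdot+X) + \ddot X^2...
\]
More precisely, $\partial_t^2[\theta_0(x+X(t))] = \ddot X\,\partial_x\theta_0(x+X) + \dot X^2\,\partial_x^2\theta_0(x+X)$ and $\partial_t[\theta_0(x+X)] = \dot X\,\partial_x\theta_0(x+X)$. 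The map $\chi\mapsto \partial_t^2\chi+\nu\partial_t\chi-\partial_x^2\chi$ is bounded linear from $D(\cT_0)$ (with its graph norm, Banach by Proposition \ref{T_0_closed}) into $C^0_tH^1_x$, hence smooth. Likewise $(\varep,\theta)\mapsto \varep H(t)\cos\theta$ is a product of a scalar, a $C^1$ function of $t$, and a Nemytskii operator.

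\textbf{The translation terms.} The first step is to show that $X\mapsto \partial_x^k\theta_0(\cdot+X(t))$, $k=1,2$, is $C^1$ from $C^2(\bb T_T)$ into $C^0_tH^1_x$. Since $\partial_x\theta_0\in H^k$ for all $k$ (Proposition \ref{propNeelw}(c)), translation $a\mapsto \tau_a g$ is $C^1$ from $\R$ into $H^1$ for $g\in H^2$, with derivative $\tau_a g'$. Uniform continuity on compacts then lifts this to $C^0_t$ valued functions. The products with $\ddot X,\dot X^2$ are then $C^1$ as bilinear combinations.

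\textbf{The nonlinear energy term.} Here $\nabla\cE(\theta) = -\partial_x^2\theta - \sin\theta\,(1+(-\Delta)^{1/2})\cos\theta$. The term $-\partial_x^2\theta_0(\cdot+X)$ is handled as above, and $-\partial_x^2\chi$ is already included. For the nonlocal term, note that $\theta$ itself is not in $H^1$ because of the limits $\pm\pi/2$. However, $\cos\theta$ and $\sin\theta - \sin\theta_0(\cdot+X)$ are. Write $\cos\theta = \cos\theta_0(\cdot+X) + [\cos\theta - \cos\theta_0(\cdot+X)]$, where the bracket equals $\chi\int_0^1(-\sin)(\theta_0(\cdot+X)+s\chi)\,ds$. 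Since $H^1(\R)$, and $H^2$, are algebras, and superposition by smooth bounded functions with bounded derivatives is $C^1$ on $W^{1,\infty}$-bounded perturbations in $H^k$, this map is $C^1$ from $(\chi,X)$ into $C^0_tH^2_x$ (we use $\chi\in C^0_tH^2_x$). Then $(1+(-\Delta)^{1/2})$ maps $H^2\to H^1$ boundedly. Multiplication by $\sin\theta$ is bounded on $H^1$, because $\sin\theta\in W^{1,\infty}$ with $\partial_x\sin\theta = \cos\theta\,\partial_x\theta$ and $\theta_0\in W^{2,\infty}$ by Proposition \ref{propNeelw}(e), and it depends $C^1$ on $(\chi,X)$ in the multiplier norm. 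This gives $C^1$ of each piece, with the derivative in $\chi$ at $(0,0,0)$ equal to $\cL_0$.

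\textbf{Main obstacle.} The main difficulty is the continuity in $t$ of the Fréchet derivative. We need uniform-in-$t$ estimates of the remainder $\|N(\theta+h)-N(\theta)-DN(\theta)h\|_{H^1}\lesssim \|h\|^2$, with constants depending only on $\sup_t\|\chi\|_{H^2}$ and $\sup_t|X|$. This requires a careful mean-value/Taylor formula in $H^1$ using the algebra property and the smoothness of $\sin$ and $\cos$. We would first prove a lemma that for $F\in C^\infty_b$ the map $w\mapsto F(g+w)-F(g)$ is $C^1$ from $H^2$ to $H^1$, locally uniformly in $g$ ranging over the translates $\theta_0(\cdot+a)$, and then apply it pointwise in $t$.
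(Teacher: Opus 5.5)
Your proposal is correct and is essentially the paper's argument: the paper computes the directional derivative $\frac{d}{ds}\cG(\chi+s\rho,X+sY,\varep+s\delta)$ and asserts that it depends continuously on $(\chi,X,\varep)$ in operator norm. Your decomposition supplies the details the paper omits: the translation terms via $\partial_x\theta_0\in H^k$, the term $\cos\theta\in C^0_tH^2_x$ feeding $(1+(-\Delta)^{1/2}):H^2\to H^1$, and $\sin\theta$ as a $W^{1,\infty}$-multiplier on $H^1$. Two small remarks: if you use, as the paper implicitly does, the criterion that a G\^ateaux derivative continuous in operator norm is $C^1$, you only need first-order continuity estimates uniform in $t$ instead of the quadratic remainder bounds you flag as the main obstacle; and your auxiliary superposition lemma must map $H^2$ into $H^2$ (not $H^1$) to cover $\cos\theta$, which is the version you actually use earlier.
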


    \begin{proof}
        The proof is a simple but lengthy calculation. Indeed, evaluating 
        \begin{align*}
            \frac d {ds} {\cG}(\chi + s \rho,X+sY,\varep + s\delta)
        \end{align*}
        shows that ${\cG}$ is differentiable everywhere and the linear functional
        \begin{align*}
            D{\cG}(\chi,X,\varep) \in \ccL(D(\cT_0)_{\widehat{\pi}}\times C^2_{\widehat{\pi}}\times \R,C^0_tH^1_x(Q_T)_{\widehat{\pi}})
        \end{align*} depends continuously on $(\chi,X,\varep)$ in operator norm.
    \end{proof}

    \begin{definition}
        Define the linear operator $\Gamma_0 :D(\cT_0)_{\widehat{\pi}} \times C^2_{\widehat{\pi}}$ by 
         \begin{align*}
            \Gamma_0(\rho,Y) := \frac d {ds}\bigg|_{s=0} {\cG}(\chi + s\rho, X + sY,0) =  \begin{pmatrix}
                \cT_0 & \partial_x \theta_0(\partial_t^2 + \nu \partial_t)  \end{pmatrix}\begin{pmatrix}
                    \rho \\ Y
                \end{pmatrix},
        \end{align*}
        i.e. the linearization of ${\cG}$ at $\theta_0$ for $\varep = 0$.
    \end{definition}

    \begin{lemma}\label{lemma_Gamma_bounded}
        $\Gamma_0$ is bounded.
    \end{lemma}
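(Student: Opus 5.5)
Since $\Gamma_0$ is linear, boundedness means an estimate
\[
\|\Gamma_0(\rho,Y)\|_{C^0_tH^1_x(Q_T)} \lesssim \|\rho\|_{D(\cT_0)} + \|Y\|_{C^2(\bb{T}_T)}
\]
for all $(\rho,Y)\in D(\cT_0)_{\widehat{\pi}}\times C^2_{\widehat{\pi}}$. The domain carries the product of the graph norm of $\cT_0$ and the $C^2$ norm. By the triangle inequality it suffices to treat the two block entries separately.

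The first entry is immediate. By definition of the graph norm, $\|\cT_0\rho\|_{C^0_tH^1_\perp} \le \|\rho\|_{D(\cT_0)}$. Here I take the graph norm to be $\|\rho\|_{C^0_tH^2\cap C^1_tH^1\cap C^2_tL^2}+\|\cT_0\rho\|_{C^0_tH^1}$, which is the norm in which Proposition \ref{T_0_closed} shows completeness. If one prefers to measure $(\partial_t^2-\partial_x^2)\rho$ in $C^0_tH^1$ instead, the bound follows from
\[
\cT_0\rho=(\partial_t^2-\partial_x^2)\rho+\nu\partial_t\rho+\mathcal S_{\theta_0}\rho-c_{\theta_0}\rho .
\]
For this one needs three facts. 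First, $\nu\partial_t\rho\in C^0_tH^1$ is controlled by $\|\rho\|_{C^1_tH^1}$. Second, $c_{\theta_0}$ is a $W^{1,\infty}$ multiplier, because $\cos\theta_0$ is smooth with derivative in $H^k$ by Proposition \ref{propNeelw}(c),(e). Third, $\mathcal S_{\theta_0}:H^2\to H^1$ is bounded, since multiplication by $\sin\theta_0\in W^{2,\infty}$ preserves $H^k$ for $k\le 2$ and $1+(-\Delta)^{1/2}$ maps $H^2\to H^1$. These give $\|\cT_0\rho\|_{C^0_tH^1}\lesssim\|\rho\|_{D(\cT_0)}$.

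The second entry is $\partial_x\theta_0(x)\,(\ddot Y+\nu\dot Y)(t)$, a product of a fixed function of $x$ and a continuous periodic function of $t$. Proposition \ref{propNeelw}(c) gives $\partial_x\theta_0\in H^1(\R)$, so for each $t$
\[
\|\partial_x\theta_0\,(\ddot Y+\nu\dot Y)(t)\|_{H^1}\le \|\partial_x\theta_0\|_{H^1}\,(1+\nu)\|Y\|_{C^2}.
\]
Continuity in $t$ with values in $H^1$ follows from the continuity of $\ddot Y+\nu\dot Y$. Taking the supremum over $t\in\bb{T}_T$ gives the bound.

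Two points need checking, though neither is a serious obstacle. First, that the range indeed lies in $C^0_tH^1_x(Q_T)_{\widehat\pi}$. This follows by differentiating the identity of Lemma \ref{lemma_G_symmetry}, or directly: $\partial_x\theta_0$ is even while $\ddot Y$ is $T/2$-antisymmetric, and $\cT_0$ commutes with $\widehat\pi$ because $\theta_0$ is odd. Second, the regularity of the nonlocal term $\mathcal S_{\theta_0}$, which is the only step beyond bookkeeping. Note that $1-\sin^2\theta_0=\cos^2\theta_0\in H^1$, so all multipliers are bounded with bounded derivatives and the product estimates in $H^1$ go through.
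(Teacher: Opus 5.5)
Your proposal is correct and follows the paper's argument. The paper likewise splits $\Gamma_0(\rho,Y)$ by the triangle inequality, bounds $\|\cT_0\rho\|_{C^0_tH^1_x}$ by the graph norm of $\rho$, and bounds the second entry by $\sup_t|\ddot Y+\nu\dot Y|\,\|\partial_x\theta_0\|_{H^1}\lesssim\|Y\|_{C^2}$. Your extra checks, namely control of $\cT_0\rho$ via $(\partial_t^2-\partial_x^2)\rho$ and the $\widehat{\pi}$-symmetry of the range, are correct but not needed for the lemma as the paper states it.
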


    \begin{proof}
        Since $\cT_0:D(\cT_0)_{\widehat{\pi}}\to C^0_tH^1_x(Q_T)_{\widehat{\pi}}$ is a bounded linear operator, we obtain 
        \begin{align*}
            \norm{\Gamma_0(\rho,Y)}_{C^0_tH^1_x} \leq\norm{\cT_0 \rho}_{C^0_tH^1_x} + \sup_{t \in [0,T]}\abs{\ddot Y(t) + \nu \dot Y(t) }\norm{\partial_x \theta_0}_{C^0_tH^1_x} \lesssim \norm{\rho}_{D(\cT_0)} + \norm{Y}_{C^2}
        \end{align*}
        $\Gamma_0$ is thus bounded.
    \end{proof}

    \begin{lemma}\label{lemma_Gamma_inv}
        $\Gamma_0$ is an isomorphism.
    \end{lemma}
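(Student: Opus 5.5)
To show that $\Gamma_0 : D(\cT_0)_{\widehat{\pi}} \times C^2_{\widehat{\pi}} \to C^0_tH^1_x(Q_T)_{\widehat{\pi}}$ is an isomorphism, we split the target space along $\partial_x\theta_0$ and treat the two components separately. Given $f \in C^0_tH^1_x(Q_T)_{\widehat{\pi}}$, write
\[
f(\cdot,t) = f_\perp(\cdot,t) + a(t)\,\partial_x\theta_0, \qquad a(t) := \nld{\partial_x\theta_0}^{-2}\pld{f(\cdot,t)}{\partial_x\theta_0}.
\]
Here $f_\perp \in C^0_tH^1_\perp(Q_T)$ because $\partial_x\theta_0 \in H^k$ for all $k$ (Proposition \ref{propNeelw}(c)), and $a \in C^0(\bb{T}_T)$.

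\emph{Symmetry of the pieces.} Since $\theta_0$ is odd, $\partial_x\theta_0$ is even. Then $f = \widehat{\pi} f$ gives
\[
a(t+T/2) = \nld{\partial_x\theta_0}^{-2}\int f(x,t+T/2)\,\partial_x\theta_0(x)\,dx = -\nld{\partial_x\theta_0}^{-2}\int f(-x,t)\,\partial_x\theta_0(x)\,dx = -a(t),
\]
so $a$ is $T/2$-antisymmetric. (The symmetry $X = \widehat{\pi}X$ for functions of $t$ alone is read as $X(t) = -X(t+T/2)$.) By uniqueness of the decomposition, $f_\perp$ is also $\widehat{\pi}$-invariant.

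\emph{Solving the equation $\Gamma_0(\rho,Y) = f$.} Since $\cT_0$ maps into $L^2_\perp$, this equation is equivalent to the pair
\[
\cT_0\rho = f_\perp, \qquad \ddot Y + \nu\dot Y = a.
\]

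For the first equation, Theorem \ref{dynamical_linearized} gives a $T$-periodic solution $\rho = [\int_{-\infty}^t \tcS(t-s)(0,f_\perp(s))^\top\,ds]_1$. It is unique in $D(\cT_0)$, because that space consists of functions orthogonal to $\partial_x\theta_0$, which removes the kernel direction. The estimate in the proof of Proposition \ref{T_0_closed}, combined with Corollary \ref{corollary_S_H1}, gives $\norm{\rho}_{D(\cT_0)} \lesssim \norm{f_\perp}_{C^0_tH^1}$. Uniqueness also yields the symmetry: $\widehat{\pi}$ commutes with $\cT_0$, since $\mathcal{L}_0$ commutes with the reflection $x\mapsto -x$ (as $\theta_0$ is odd, $\sin\theta_0$ is odd and $\cos\theta_0$ is even) and $\partial_t$ commutes with the time shift. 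Hence $\widehat{\pi}\rho$ solves the same equation, and therefore $\widehat{\pi}\rho = \rho$.

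For the second equation, set $w = \dot Y$, so that $\dot w + \nu w = a$. Its unique periodic solution is
\[
w(t) = \int_{-\infty}^t e^{-\nu(t-s)}a(s)\,ds,
\]
and it is $T/2$-antisymmetric because $a$ is. In particular $\int_0^T w\,dt = 0$, so $Y(t) = \int_0^t w\,ds + c$ is $T$-periodic. The constant $c$ is fixed by the requirement $Y(t+T/2) = -Y(t)$, which forces $Y$ to have zero mean; this determines $c$ uniquely. The resulting $Y$ lies in $C^2$, with $\norm{Y}_{C^2} \lesssim \norm{a}_{C^0} \lesssim \norm{f}_{C^0_tH^1}$.

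\emph{Injectivity.} If $\Gamma_0(\rho,Y) = 0$, the orthogonal splitting gives $\cT_0\rho = 0$ and $\ddot Y + \nu\dot Y = 0$. The first implies $\rho = 0$ by Theorem \ref{dynamical_linearized} together with $\rho \perp \partial_x\theta_0$. The second forces $Y$ to be constant by the same argument as for $\lambda$ in the proof of Theorem \ref{dynamical_linearized}, and a constant with $Y = -Y(\cdot + T/2)$ is zero.

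\emph{Conclusion.} Boundedness of $\Gamma_0$ is Lemma \ref{lemma_Gamma_bounded}. We have shown it is bijective with a bounded inverse, which follows from the estimates above or, alternatively, from the open mapping theorem, since $D(\cT_0)$ is complete by Proposition \ref{T_0_closed}.

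The main obstacle is the precise role of the $T/2$-symmetry. It is exactly what removes the two degeneracies of the problem: the translation kernel in $Y$ (constant shifts of the wall) and the solvability condition that $a$ have zero mean, without which $Y$ would drift and fail to be periodic. Care is needed to check that the symmetric subspaces of domain and target correspond correctly under $\Gamma_0$.
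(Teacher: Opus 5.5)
Your proposal is correct and follows essentially the same route as the paper. You split $f$ along $\partial_x\theta_0$, solve $\cT_0\rho = f_\perp$ via Theorem \ref{dynamical_linearized} with the symmetry of $\rho$ coming from uniqueness and the commutation of $\widehat{\pi}$ with $\cT_0$, and then solve the first-order equation for $\dot Y$ by variation of constants, using $T/2$-antisymmetry both to make $Y$ periodic and to fix the integration constant. The differences are cosmetic: you check the commutation directly from the parity of $\sin\theta_0$ and $\cos\theta_0$ rather than via the equivariance of $\nabla\mathcal{E}$, you pin down the constant through the zero-mean condition (cleaner than the paper's normalization), and you spell out injectivity and the bounded inverse explicitly.
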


    \begin{proof}
         We need to show that there exists a unique solution $(\rho,Y) \in D(\cT_0)_{\widehat{\pi}} \times C^2_{\widehat{\pi}} $ to $\Gamma_0(\rho,Y) = f$ for every $f \in C^0_tH^1_x(Q_T)_{\widehat{\pi}}$ that is,
        \begin{align*}
            \partial_x \theta_0(\ddot{Y} + \nu \dot{Y}) + \partial_t^2 \rho + \nu \partial_t \rho + \mathcal{L}_0 \rho = f.
        \end{align*}
        Assuming $f_\perp = f - \partial_x \theta_0(\ddot{Y} + \nu\dot{Y}) \perp \partial_x \theta_0$, we may apply Theorem \ref{dynamical_linearized} and obtain a unique $T$-periodic solution $\rho \in D(\cT_0)$. Since 
        \begin{align*}
            \nabla \mathcal{E}({\widehat{\pi}}u) = {\widehat{\pi}} \nabla \mathcal{E}(u),
        \end{align*}
        we obtain 
        \[
        \begin{aligned}
            \mathcal{L}_0{\widehat{\pi}}u = \frac d {ds} \bigg|_{s = 0} \nabla \mathcal{E}(\theta_0 + s{\widehat{\pi}}u) &=\frac d {ds} \bigg|_{s = 0} \nabla \mathcal{E}({\widehat{\pi}}(\theta_0 + u)) \\
            &= \frac d {ds} \bigg|_{s = 0}{\widehat{\pi}} \nabla \mathcal{E}(\theta_0 + su) \\
            &= {\widehat{\pi}}\mathcal{L}_0 u.
        \end{aligned}
        \]
        Thus,
        \begin{align*}
            \cT_0{\widehat{\pi}}\rho = {\widehat{\pi}}\cT_0\rho = {\widehat{\pi}}f = f.
        \end{align*}
        ${\widehat{\pi}}\rho$ is then also the unique $T$-periodic solution, so ${\widehat{\pi}}\rho = \rho$. The problem thus reduces to finding a unique $Y \in C^2_{\widehat{\pi}}$ satisfying $\ddot{Y} + \nu \dot{Y} = \skalprod{f(t)}{\partial_x \theta_0}/\norm{\partial_x \theta_0}_{L^2}^2$, the projection of $f(t)$ onto the subspace spanned by $\partial_x \theta_0$. Take any $f\in C^0_tH^1_x(Q_T)_{\widehat{\pi}}$. Then 
        \begin{align*}
            \int_0^T \skalprod{f(\cdot,t)}{\partial_x \theta_0} \: dt = \int_{T/2}^T \skalprod{f(\cdot,t)}{\partial_x \theta_0} -\skalprod{f(-\cdot,t)}{\partial_x \theta_0} \: dt = 0,
        \end{align*}
        since $\partial_x \theta_0$ is even. Set $g(t) := \skalprod{f(t)}{\partial_x \theta_0}/\norm{\partial_x \theta_0}_{L^2}^2$ and note that $g(t) = -g(t + T/2)$. We obtain a solution to
        \begin{align*}
            \dot{W} + \nu W = g(t),
        \end{align*}
        which, by variation of constants, reads
        \begin{align*}
            W(t) = \int_{-\infty}^t e^{-\nu (t-s)}\tilde{g}(s) \:ds.
        \end{align*}
        Periodicity and convergence of the integral follow as in the proof of Theorem \ref{dynamical_linearized}, since $g$ is $T$-periodic and continuous. Uniqueness follows by solving the homogeneous equation, whose solution is $W_h(t) = Ce^{-\nu t}$, and requiring it to be $T$-periodic. Moreover, this solution satisfies $W(t) = -W(t+T/2)$. The $T$-periodic solutions are thus given by $Y_c(t) = C + \int_0^t W(s) \: ds$. If we now choose $Y(T/2) = 0$, we obtain 
        \begin{align*}
            Y(t+T/2) = \int_0^{T/2+t}W(s) \: ds = Y(T/2) + \int_{T/2}^{T/2 +t} W(s) \: ds = -Y(t).
        \end{align*}
        $\Gamma_0$ is thus invertible.
    \end{proof}

    \subsection{Proof of Theorem \ref{theorem_existence}}
%
        Since the linearization $\Gamma_0 = D_{\chi,X}{\cG}(0,0,0)$ is a bounded isomorphism between Banach spaces, by the implicit function theorem there exist a neighborhood $(-\varep_0,\varep_0)$ of $0 \in \R$, a neighborhood $V \subset D(\cT_0)_{\widehat{\pi}}\times C^2_{\widehat{\pi}}$ of $(0,0)$ and a continuously differentiable function $g:(-\varep_0,\varep_0) \to V$, such that ${\cG}(\chi,X,\varep) = 0$ if and only if $g(\varep) = (\chi,X)$. Since $g$ is differentiable, the norm estimates $\norm{\chi}_{D(\cT_0)} = O(\varep)$, $\norm{X}_{C^2} = O(\varep)$ follow. Set $\frac{d}{d\varep}\big|_{\varep = 0}g(\varep) = (\rho,Y)$ and test $\frac d {d\varep}\big|_{\varep = 0}{\cG}(g(\varep),\varep)$ with $\partial_x \theta_0$. We obtain
        \begin{align*}
            0 = \norm{\partial_x \theta_0}_{L^2}^2(\ddot Y + \nu \dot Y) - H(t)\skalprod{\partial_x \theta_0}{\cos(\theta_0)}_{L^2}
        \end{align*}
        and thus 
        \begin{align*}
            \ddot Y + \nu \dot Y = 2\norm{\partial_x \theta_0}_{L^2}^{-2}H(t).
        \end{align*}
        This concludes the proof. \qed

\begin{remark}
\label{varepzerostatic}
It is to be observed that, since ${\cG}(0,0,0) = 0$ and by uniqueness of the solutions (up to translations), when $\varepsilon \to 0^+$ we recover the static N\'eel wall, $\brt_{|\vep=0}(\cdot,t) = \theta_0(\cdot)$, which is time periodic everywhere with any period.
\end{remark}
%

\section{Linearization and generation of the evolution system}
\label{secspectral}

In this Section we linearize the Landau-Lifshitz-Gilbert reduced model around the time periodic solution from Theorem \ref{theorem_existence} and prove the generation of a propagator or evolution system

\subsection{Linearized equation for perturbations}
\label{seclinedp}

Let $\brt^\vep = \brt^\vep(x,t)$ be the time-periodic N\'eel wall solution from Theorem \ref{theorem_existence} for a fixed $\vep \in (- \vep_0, \vep_0)$ sufficiently small. By construction, the mapping
\[
(- \vep_0, \vep_0) \ni \vep \mapsto \brt^\vep \in (C_t^2L_x^2 \cap C_t^1 H_x^1 \cap C^0_t H_x^2) (Q_T),
\]
is of class $C^1$ and the solution for $\vep = 0$ coincides with the static N\'eel wall,
\[
\brt^0(x,t) = \brt^\vep \big|_{\vep = 0}(x,t) \equiv \theta_0(x),
\]
which is time-periodic for any period $T > 0$. For the sake of simplicity, in the sequel we suppress the $\vep$-notation and write $\brt := \brt(x,t) = \brt^\vep(x,t)$ to denote the time-periodic N\'eel wall for any fixed $\vep \in (- \vep_0, \vep_0)$. The latter is a solution to the nonlinear equation,
\begin{equation}
\label{fulleq}
\partial_t^2 \theta + \nu \partial_t \theta + \nabla \cE (\theta) = \vep H(t) \cos \theta.
\end{equation}

Let us consider a solution to \eqref{fulleq} of the form $\brt(x,t) + u(x,t)$, where now $u$ denotes a small perturbation of the time-periodic N\'eel wall. Upon substitution into \eqref{fulleq} and since $\brt$ itself solves \eqref{fulleq}, we obtain the following nonlinear equation for the perturbation,
\begin{equation}
\label{nonlinu}
\partial_t^2 u + \nu \partial_t u + \nabla \cE (\brt + u) - \nabla \cE (\brt + u) - \vep H(t) ( \cos (\brt + u) - \cos \brt ) = 0.
\end{equation}
Making the expansions around $\brt$,
\[
\begin{aligned}
\nabla \cE(\brt + u) - \nabla \cE(\brt) &= D^2 \cE (\brt) u + O(u^2),\\
\cos (\brt + u) - \cos \brt &= - (\sin \brt) u + O(u^2),
\end{aligned}
\]
and omitting the higher order terms, we arrive at the linearized equation (around $\brt$) for the perturbation,
\begin{equation}
\label{linequ}
\partial_t^2 u + \nu \partial_t u + D^2 \cE (\brt) u + \vep H(t) ( \sin \brt ) u = 0.
\end{equation}
Let us examine the term $D^2 \cE (\brt)$. From
\[
\begin{aligned}
\frac{d}{ds} \cE (\brt + s u) \big|_{s=0} &= \int_\R \big( \partial_x \brt \partial_x u - \cos \brt \sin \brt \, u - \sin \brt u (- \Delta)^{1/2} \cos \brt \big) \, dx, \quad \text{and}\\
\frac{d}{ds} \nabla \cE (\brt + s u) \big|_{s=0} &= - \partial_x^2 u + u \cos \brt (1 + (-\Delta)^{1/2}) \cos \brt - \sin \brt (1 + (-\Delta)^{1/2}) ((\sin \brt) u),
\end{aligned}
\]
we recognize that the expression
\[
\begin{aligned}
D^2 \cE(\brt) u &= - \partial_x^2 u + s_{\brt}(x,t) (1 + (-\Delta)^{1/2}) ( s_{\brt}(x,t) u) - c_{\brt}(x,t) u \\ &=: \cL_{\brt} u,
\end{aligned}
\]
coincides with the linearization around the solution $\brt(x,t)$ (compare to the corresponding expression for $\cL_0$), and where 
\[
s_{\brt}(x,t) := \sin \brt(x,t), \qquad c_{\brt}(x,t) := \cos \brt(x,t) (1 + (-\Delta)^{1/2}) \cos \brt(x,t),
\]
are now coefficients depending on $x$ and $t$, and time-periodic. 

In other words, the linearization around the time-periodic N\'eel walls, since it involves coefficients which are time-dependent, now defines a family of linear operators parametrized by $t \geq 0$. Indeed, for each fixed $t \in [0,T]$, the operator
\begin{equation}
\label{defLbt}
\left\{
\begin{aligned}
\cL_{\brt} (t) &: L^2 \to L^2,\\
D(\cL_{\brt}(t)) &= H^2,\\
\cL_{\brt}(t) u &:= - \partial^2_x u + \mathcal{S}_{\brt}(t) u - c_{\brt} u, \qquad u \in D(\cL_{\brt}(t)) = H^2,
\end{aligned}
\right.
\end{equation}
is a closed, linear, densely defined operator (the domain of the family is fixed, $D(\cL_{\brt}(t)) = H^2$, for all $t$) where, again for each $t$, the non-local, linear operator $\mathcal{S}_{\brt}(t)$ is defined as
\begin{equation}
\label{defS0}
\left\{
\begin{aligned}
\mathcal{S}_{\brt}(t) &: L^2 \to L^2,\\
D(\mathcal{S}_{\brt}(t)) &= H^1,\\
\mathcal{S}_{\brt}(t) u &:= s_{\brt}(x,t) (1+(-\Delta)^{1/2}) (u s_{\brt}(x,t)), \qquad u \in D(\mathcal{S}_{\brt}(t)) = H^1.
\end{aligned}
\right.
\end{equation}

Use the precise form of the time-periodic N\'eel wall (see Theorem \ref{theorem_existence}),
\[
\brt(x,t) = \theta_0 (x + X(t)) + \chi(x,t),
\]
and make the Taylor expansions 
\[
\cos (\theta_0 + \chi) = \cos \theta_0 - \chi \sin \theta_0 + O(\chi^2), \quad \sin (\theta_0 + \chi) = \sin \theta_0 + \chi \cos \theta_0 + O(\chi^2),
\]
to arrive at
\begin{equation}
\label{decompL}
\cL_{\brt} = \cL_0 + \cL^{(1)}_\chi + \cL^{(2)}_\chi,
\end{equation}
where
\[
\cL_0 u = - \partial_x^2 u + \sin \theta_0 (1 + (-\Delta)^{1/2}) ((\sin \theta_0) u) + [\cos \theta_0 (1 + (-\Delta)^{1/2}) \cos \theta_0] u,
\]
is the non-local linearized operator around the static N\'eel wall defined in \eqref{defL0} and where,
\begin{equation}
\label{exprL1chi}
\begin{aligned}
\cL^{(1)}_\chi u &:= \sin \theta_0 (1 + (-\Delta)^{1/2}) ((\chi \cos \theta_0) u) + \chi \cos \theta_0 (1 + (-\Delta)^{1/2}) (( \sin \theta_0)u) \\
& \; - [\cos \theta_0 (1 + (-\Delta)^{1/2}) \chi \sin \theta_0] u - [\chi \sin \theta_0 (1 + (-\Delta)^{1/2}) \cos \theta_0] u,
\end{aligned}
\end{equation}
and
\begin{equation}
\label{exprL2chi}
\begin{aligned}
\cL^{(2)}_\chi u &:= \chi \cos \theta_0 (1 + (-\Delta)^{1/2}) ((\chi \cos \theta_0) u) - [\chi \sin \theta_0 (1 + (-\Delta)^{1/2}) (\chi \sin \theta_0)] u \\
& \; + \sin \theta_0 (1 + (-\Delta)^{1/2}) (O(\chi^2) u) + \chi \cos \theta_0 (1 + (-\Delta)^{1/2}) (O(\chi^2) u) \\
& \; + [ \cos \theta_0 (1 + (-\Delta)^{1/2}) O(\chi^2)] u + [O(\chi^2) (1 + (-\Delta)^{1/2}) \cos \theta_0]u\\
& \; - [\chi \sin \theta_0 (1 + (-\Delta)^{1/2}) O(\chi^2)]u - [O(\chi^2) (1 + (-\Delta)^{1/2}) (\chi \sin \theta_0)]u\\
& \; + [O(\chi^2) (1 + (-\Delta)^{1/2}) O(\chi^2)] u,
\end{aligned}
\end{equation}
are non-local, linear operators with time-periodic coefficients depending on $(x,t)$.

Notice that $\cL^{(1)}_\chi = O(|\chi|)$ and $\cL^{(2)}_\chi = O(\chi^2)$, and therefore the operator $\cL_{\brt}$ is a perturbation of the static operator $\cL_0$ of order $O(|\chi|)$. This observation is made precise in the following result.

\begin{lemma}
\label{lemperL}
There exists a uniform constant $C > 0$ such that
\begin{equation}
\label{estpertu}
\| (\cL_{\brt} - \cL_0) u \|_{L^2} \leq C \| \chi \|_{C_t^0H_x^2} \| u \|_{H^1},
\end{equation}
for all $u \in H^1$ and all $t \geq 0$.
\end{lemma}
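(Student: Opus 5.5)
Since $\cL_{\brt} - \cL_0 = \cL^{(1)}_\chi + \cL^{(2)}_\chi$ by \eqref{decompL}, I will estimate each term in \eqref{exprL1chi}--\eqref{exprL2chi} separately. A cleaner route avoids the Taylor remainders: write
\[
(\cL_{\brt} - \cL_0)u = \big[\mathcal{S}_{\brt}(t) - \mathcal{S}_{\theta_0}\big]u - (c_{\brt} - c_{\theta_0})u
\]
and treat the two differences directly.

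For the first difference, write $s_{\brt} = s_{\theta_0} + \delta_s$, where $\delta_s := \sin\brt - \sin\theta_0$. Then
\[
\mathcal{S}_{\brt}u - \mathcal{S}_{\theta_0}u = \delta_s (1+(-\Delta)^{1/2})(s_{\brt}u) + s_{\theta_0}(1+(-\Delta)^{1/2})(\delta_s u).
\]
We have $\|(1+(-\Delta)^{1/2})w\|_{L^2}\lesssim \|w\|_{H^1}$, and $\|fw\|_{H^1}\lesssim \|f\|_{W^{1,\infty}}\|w\|_{H^1}$. Also $|\sin|\le 1$, and $\partial_x\brt$ is bounded uniformly in $t$, using $\theta_0\in W^{2,\infty}$ (Proposition \ref{propNeelw}(e)) and $H^2\hookrightarrow W^{1,\infty}$ for $\chi$. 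Hence the first summand is bounded by $\|\delta_s\|_{L^\infty}\|u\|_{H^1}$ and the second by $\|\delta_s\|_{W^{1,\infty}}\|u\|_{H^1}$. By the mean value theorem, $\|\delta_s\|_{W^{1,\infty}}\lesssim \|\brt-\theta_0\|_{W^{1,\infty}}(1+\|\partial_x\theta_0\|_{L^\infty})$.

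For the second difference, $c_{\brt}-c_{\theta_0} = \delta_c(1+(-\Delta)^{1/2})\cos\brt + \cos\theta_0(1+(-\Delta)^{1/2})\delta_c$, with $\delta_c := \cos\brt-\cos\theta_0$. Multiplying by $u$ and using $\|u\|_{L^\infty}\lesssim\|u\|_{H^1}$, it suffices to bound both coefficient pieces in $L^2$. Since $\cos\theta_0$ decays with $\cos\theta_0\in H^1$ (from the energy bound) and $\|(1+(-\Delta)^{1/2})\delta_c\|_{L^2}\lesssim \|\delta_c\|_{H^1}$, we get $\|(c_{\brt}-c_{\theta_0})u\|_{L^2}\lesssim \|\delta_c\|_{H^1}\|u\|_{H^1}$, where $\|\delta_c\|_{H^1}\lesssim \|\brt-\theta_0\|_{H^1}$.

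The main obstacle is identifying what $\brt-\theta_0$ means. It equals $\chi$ plus the translation part $\theta_0(x+X(t))-\theta_0(x)$, and the latter is not controlled by $\chi$. I expect the intended reading (consistent with \eqref{decompL}, which expands around $\theta_0$ with $X$ absorbed) is to work in the co-moving frame. That is, compare with $\cL_0$ translated by $X(t)$, or equivalently note that translations are isometries on $L^2$ and $H^1$ and that $\cL_0$ conjugated by translation has coefficients $\theta_0(\cdot+X)$. With that convention $\brt-\theta_0(\cdot+X)=\chi$, and all of the above bounds hold with $\|\chi(t)\|_{H^2}\le\|\chi\|_{C^0_tH^2_x}$. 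Uniformity in $t$ then follows from periodicity and the $C^0_t$ norm, and the $O(\chi^2)$ terms in $\cL^{(2)}_\chi$ are absorbed since $\|\chi\|_{C^0_tH^2_x}=O(\vep)$ is bounded. If one insists on the fixed frame instead, the extra term is bounded by $|X(t)|\,\|\partial_x\theta_0\|_{W^{1,\infty}\cap H^1}\|u\|_{H^1}$ with $|X|=O(\vep)$, which is of the same order as $\|\chi\|_{C^0_tH^2_x}$.
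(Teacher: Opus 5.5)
Your proof is correct, but it is organized differently from the paper's. The paper bounds the Taylor pieces $\cL^{(1)}_\chi$ and $\cL^{(2)}_\chi$ of \eqref{decompL} term by term. It uses only $1+(-\Delta)^{1/2}\in\ccB(H^1,L^2)$ and $H^1\hookrightarrow L^\infty$, and leaves the $O(\chi^2)$ remainders in \eqref{exprL2chi} informal. You use the same two tools, but through exact telescoping identities for $\cS_{\brt}-\cS_{\theta_0}$ and $c_{\brt}-c_{\theta_0}$ together with the mean value theorem. This removes the remainder bookkeeping and shows that $C$ depends only on $\theta_0$ and on an a priori bound for $\|\chi\|_{C^0_tH^2_x}$. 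The paper's route isolates the part linear in $\chi$, which is not needed for this estimate.

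One small omission: for the piece $\delta_c(1+(-\Delta)^{1/2})\cos\brt$ you also need $\sup_t\|\cos\brt(\cdot,t)\|_{H^1}<\infty$. This follows from $|\cos(a+b)-\cos a|\le|b|$ and the bound on $\chi$, but you only state it for $\cos\theta_0$.

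Your remark on the translation is accurate, and the paper passes over it. The expansion behind \eqref{decompL} writes $\theta_0$ where $\brt$ contains $\theta_0(\cdot+X(t))$, so the paper's argument really yields the co-moving version of \eqref{estpertu}. In the fixed frame your argument gives $\|(\cL_{\brt}-\cL_0)u\|_{L^2}\le C(\|\chi\|_{C^0_tH^2_x}+\sup_t|X(t)|)\|u\|_{H^1}$. This is not literally \eqref{estpertu}, since nothing gives $|X|\lesssim\|\chi\|_{C^0_tH^2_x}$, so ``of the same order'' should be stated in this form. The bound is still $O(|\vep|)$, and that is exactly what Corollary \ref{corperA} uses, where $\cA_0$ is the untranslated operator.
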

\begin{proof}
First, it known that the linear operator $1 + (-\Delta)^{1/2}$ is bounded from $H^1$ to $L^2$ (see, e.g., Proposition 2.2 in Capella \emph{et al.} \cite{CMMP25}). Hence, from the continuous embedding $H^1 \hookrightarrow L^\infty$ and from the expression \eqref{exprL1chi} for $\cL_\chi^{(1)}$ we obtain
\[
\begin{aligned}
\| \cL^{(1)}_\chi u \|_{L^2} &= \big\| \sin \theta_0 (1 + (-\Delta)^{1/2}) ((\chi \cos \theta_0) u) + \chi \cos \theta_0 (1 + (-\Delta)^{1/2}) (( \sin \theta_0)u) \\ & \;\; - [\cos \theta_0 (1 + (-\Delta)^{1/2}) \chi \sin \theta_0] u - [\chi \sin \theta_0 (1 + (-\Delta)^{1/2}) \cos \theta_0] u \big\|_{L^2} \\
&\leq C \| \chi \|_{C_t^0H_x^2} \| u \|_{H^1},
\end{aligned}
\]
for some uniform $C > 0$ which may depend on $T > 0$. Likewise, from expression \eqref{exprL2chi} and from a similar argument we arrive at
\[
\| \cL^{(2)}_\chi u \|_{L^2} \leq C \| \chi \|_{C_t^0H_x^2} (\|u \|_{L^2} + \| u \|_{H^1}) \lesssim \| \chi \|_{C_t^0H_x^2} \| u \|_{H^1}.
\]
This yields the result.
\end{proof}

Now, let us set $v := \partial_t u$ in order to recast the linear equation \eqref{linequ} for the perturbation as a system of the form
\begin{equation}
\label{systlin}
\partial_t \begin{pmatrix} u \\ v \end{pmatrix} = \begin{pmatrix} 0 & I \\ - \cL_{\brt} - \vep H(t) \sin \brt & - \nu I \end{pmatrix}\begin{pmatrix} u \\ v \end{pmatrix} =: \cA_{\brt}(t) \begin{pmatrix} u \\ v\end{pmatrix},
\end{equation}
where $\cA_{\brt}(t)$ is a family (parametrized by $t \geq 0$) of time-periodic linear operators with domain 
\[
D(\cA_{\brt}(t)) = H^2 \times H^1, \qquad \text{for all} \; t \geq 0.
\]
Notice that the domain of the family is independent of $t$. Moreover, $D(\cA_{\brt}(t)) = D(\cA_0) = H^2 \times H^1$, for all $t$. Thus, for shortness let us denote
\[
D_0 := D(\cA_0) = D(\cA_{\brt}(t)) = H^2 \times H^1, \qquad \forall \; t \geq 0,
\]
as well as,
\[
\cA(t) := \cA_{\brt}(t), \qquad \forall \; t \geq 0.
\]

Hence, since the linearized operators depend on $t$, we arrive at a linear evolution problem of the form $\partial_t U = \cA(t) U$ for $U = (u, v) \in H^1 \times L^2$.

\subsection{The evolution system}

In the Hilbert space $H^1 \times L^2$, for every fixed $t \geq 0$, $\cA(t) : D_0 \subset H^1 \times L^2 \to H^1 \times L^2$ is a linear operator. Thus, the family $\{ \cA(t) \}_{t \geq 0}$ determines an evolution problem of the form
\begin{equation}
\label{evolsyst}
\left\{
\begin{aligned}
\frac{dU}{dt} &= \cA(t) U,\\
U(s) &= U_s, & 0 \leq s < t,
\end{aligned}
\right.
\end{equation}
and we are interested in the propagator operator
\begin{equation}
\label{propag}
\cU(t,s) U_s := U(t), \qquad \text{for } \, 0 \leq s < t,
\end{equation}
which denotes the solution to the evolution problem \eqref{evolsyst} for each $0 \leq s < t$, with initial datum $U_s$ at $t = s$. Recall that an evolution system, $\cU(t,s)$, $0 \leq s \leq t$, is a biparametric family of bounded operators $\{ \cU(t,s) \}_{0 \leq s \leq t} \subset \ccB(H^1 \times L^2)$ such that $\cU(s,s) = I$, $\cU(t,r) \cU(r,s) = \cU(t,s)$ for all $0 \leq s \leq r \leq t$, and the mapping $(t,s) \mapsto\cU(t,s)$ is strongly continuous for all $0 \leq s \leq t$. In order to prove the existence of the propagator we need to verify the following conditions (cf. Pazy \cite{Pa83}, \S 5.5.3):

\begin{itemize}
\item[(H$_1$)] \phantomsection
\label{H1} $\{\cA(t)\}_{t \geq 0}$ is a stable family of generators with uniform stability constants $M \geq 1$ and $\omega \in \R$.
\item[(H$_2^+$)] \phantomsection
\label{H2p} There exists a family of isomorphisms $\cQ(t)$ of $Y := D(\cA(t)) \subset X$ onto $X$ such that $\cQ(t) V$ is continuously differentiable in $t$ for all $V \in Y$ and $\cQ(t) \cA(t) \cQ(t)^{-1} = \cA(t) + \cB(t)$, where $\{\cB(t)\}_{t \geq 0}$ is a strongly continuous family of bounded operators on $X$.
\item[(H$_3$)] \phantomsection
\label{H3} $\cA(t)$ is bounded from $Y$ to $X$ for all $t$ and $t \mapsto \cA(t)$ is a continuous operator in the operator norm of $\ccB(Y,X)$, $\| \cdot \|_{Y \to X}$.
\end{itemize}

\begin{remark}
The set of conditions \hyperref[H1]{\rm{(H$_1$)}}, \hyperref[H2p]{\rm{(H$_2^+$)}} and \hyperref[H3]{\rm{(H$_3$)}} are usually referred to as the ``hyperbolic'' case (in contrast with the ``parabolic'' case in which $\cA(t)$ generates an analytic semigroup; see Pazy \cite{Pa83}, \S 5.5.3).
\end{remark}

A corollary from Lemma \ref{lemperL} is the fact that $\cA(t)$ is a relatively bounded perturbation of the block operator $\cA_0$ defined in \eqref{defA0}.

\begin{corollary}
\label{corperA}
The family of operators,
\begin{equation}
\label{defcB}
\cB_\vep(t) := \cA(t) - \cA_0, \qquad t \geq 0,
\end{equation}
is uniformly bounded in $H^1 \times L^2$.
\end{corollary}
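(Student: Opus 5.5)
The plan is to compute $\cB_\vep(t)$ explicitly as a block operator and observe that only its lower-left entry is nontrivial. That entry is a zeroth/first order perturbation acting on the first component $u \in H^1$ and landing in $L^2$, which is exactly what boundedness on $H^1 \times L^2$ requires.

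Comparing \eqref{systlin} with \eqref{defA0}, the upper row of both operators is $(0,\ I)$. For the lower-right entry, I take $\cA_0$ with damping entry $-\nu I$, consistent with $\cA_\perp$ in Lemma \ref{lemma_A_closed} and with the first order form of the linearized equation. Even if one keeps the sign printed in \eqref{defA0}, this entry only contributes the bounded operator $2\nu I$ on $L^2$, so the conclusion is unaffected. Hence
\[
\cB_\vep(t) \begin{pmatrix} u \\ v \end{pmatrix} = \begin{pmatrix} 0 \\ -(\cL_{\brt}(t) - \cL_0) u - \vep H(t) (\sin \brt) u \end{pmatrix},
\]
and therefore
\[
\| \cB_\vep(t)(u,v) \|_{H^1 \times L^2} \le \| (\cL_{\brt}(t) - \cL_0) u \|_{L^2} + |\vep| \, \|H\|_{L^\infty} \|u\|_{L^2}.
\]
The second term is trivially bounded by $|\vep| \, \|H\|_\infty \|u\|_{H^1}$, since $|\sin \brt| \le 1$ and $H$ is bounded.

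For the first term I would invoke Lemma \ref{lemperL}, with one correction. The coefficients of $\cL_{\brt}$ are built from $\theta_0(x+X(t)) + \chi$, not from $\theta_0(x) + \chi$, so I would split
\[
\cL_{\brt} - \cL_0 = \big(\cL_{\brt} - \cL_{\theta_0(\cdot + X(t))}\big) + \big(\cL_{\theta_0(\cdot + X(t))} - \cL_0\big).
\]
The first difference is handled by Lemma \ref{lemperL}, applied with $\theta_0$ replaced by its translate. The translate enjoys the same bounds as $\theta_0$: the $W^{2,\infty}$ bound of Proposition \ref{propNeelw}(e) is unchanged by translation, and the half-Laplacian commutes with translations. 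This gives the bound $C \| \chi \|_{C^0_t H^2_x} \|u\|_{H^1}$.

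For the second difference, each coefficient changes by a quantity such as $\sin \theta_0(x+X) - \sin \theta_0(x)$. By the mean value theorem and Proposition \ref{propNeelw}(c),(e), such a difference is bounded in $W^{1,\infty}$ by $|X(t)| \, \|\partial_x \theta_0\|_{W^{1,\infty}}$. The terms of the form $c_{\brt} u$ involve $(1 + (-\Delta)^{1/2}) \cos \theta_0(\cdot + X)$. Since $\partial_x \cos \theta_0 \in H^1$, the difference of these functions is controlled in $L^\infty$ by $|X|$ (in $L^2$ via $\|\partial_x \cos\theta_0\|_{H^1}$, then in $L^\infty$ via Sobolev embedding after also differentiating). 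The resulting bound is $C |X(t)| \, \|u\|_{H^1}$.

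The main technical point is the nonlocal piece $s (1 + (-\Delta)^{1/2})(s u)$ with a perturbed coefficient $s$. The argument uses two facts:
\begin{itemize}
\item multiplication by a $W^{1,\infty}$ function is bounded on $H^1$;
\item $1 + (-\Delta)^{1/2}$ is bounded from $H^1$ to $L^2$ (Proposition 2.2 of \cite{CMMP25}, as used in Lemma \ref{lemperL}).
\end{itemize}
Writing $s_1 A (s_1 u) - s_0 A (s_0 u) = (s_1 - s_0) A (s_1 u) + s_0 A ((s_1 - s_0) u)$ then gives the bound $\|s_1 - s_0\|_{W^{1,\infty}} \|u\|_{H^1}$.

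Finally, Theorem \ref{theorem_existence} gives $\|\chi\|_{D(\cT_0)} = O(\vep)$ and $\|X\|_{C^2} = O(\vep)$, and all quantities are $T$-periodic in $t$. Taking the supremum over $t \in \bb{T}_T$ therefore yields
\[
\sup_{t \ge 0} \| \cB_\vep(t) \|_{H^1 \times L^2 \to H^1 \times L^2} \lesssim \|\chi\|_{C^0_t H^2_x} + \|X\|_{C^0} + |\vep| = O(\vep).
\]
This gives uniform boundedness, and in fact smallness for small $\vep$, which is what the stability-of-generators argument for \hyperref[H1]{\rm{(H$_1$)}} will need.
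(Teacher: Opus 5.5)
Your proof is correct and follows the paper's route. Only the lower-left entry of $\cB_\vep(t)$ survives, and it is bounded from $H^1$ to $L^2$ via Lemma \ref{lemperL} plus the trivial estimate on $\vep H(t)\sin\brt$, with $\|\chi\|_{C^0_tH^2_x}=O(\vep)$ giving smallness.

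You also make two refinements of details the paper's proof glosses over, and both are correct. First, you note the mismatch of the damping sign between \eqref{defA0} and \eqref{systlin}. Second, you split through the translated operator $\cL_{\theta_0(\cdot+X(t))}$ to account for the shift $X(t)$, which the expansion behind Lemma \ref{lemperL} silently drops.
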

\begin{proof}
It is clear that for each fixed value of $\vep \in (-\vep_0, \vep_0)$ and for all $t \geq 0$ we have,
\[
\left\{
\begin{aligned}
\cB_\vep(t) &= \cA(t) - \cA_0 = \begin{pmatrix} 0 & 0 \\ \widetilde{\cB}_\vep(t) & 0 \end{pmatrix},\\
\cB_\vep(t) &: H^1 \times L^2 \to H^1 \times L^2,
\end{aligned}
\right.
\]
where
\[
\widetilde{\cB}_\vep(t) := \cL_0 - \cL_{\brt} - \vep H(t) \sin \brt,
\]
with a constant domain $D(\cB_\vep(t)) \equiv H^1 \times L^2$ (notice that the Laplacian cancels out in the expression for $\cL_0 - \cL_{\brt}$ and $\cB_\vep(t)$ is defined everywhere in $H^1 \times L^2$). Therefore, for all $U = (u, v)^{\top} \in H^1 \times L^2$ there holds
\[
\begin{aligned}
\| \cB_\vep(t) U \|_{H^1 \times L^2} = \| \widetilde{\cB}_\vep(t) u \|_{L^2} &= \| (\cL_0 - \cL_{\brt}) u \|_{L^2} + \| \vep H(t) \sin \brt u \|_{L^2}\\
&\leq C ( \| \chi \|_{C_t^0H_x^2} + |\vep|) \| u \|_{H^1}\\
&\lesssim |\vep| \| u \|_{H^1},
\end{aligned}
\]
inasmuch as $\| \chi \|_{C_t^0H_x^2} = O(|\vep|)$. This implies that there exists a uniform constant $C > 0$ such that
\[
\| \cA(t) - \cA_0 \| \leq C |\vep|, \qquad \forall \, t \geq 0,
\]
and the family is uniformly bounded, $\cA(t) - \cA_0 \in \ccB(H^1 \times L^2)$ for all $t \geq 0$.
\end{proof}

\begin{lemma}
\label{lemstableA}
The family of operators $\{ \cA(t) \}_{t \geq 0}$ is a stable family of infinitesimal generators with stability constants $M \equiv 1$ and $\omega + C|\vep| \in \R$ for some uniform $C > 0$.
\end{lemma}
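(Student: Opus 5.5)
The plan is to treat each $\cA(t)$ as a bounded perturbation of the fixed generator $\cA_0$ and then invoke the standard stability result for bounded perturbations of stable families; see Pazy \cite{Pa83}, Theorem 5.2.3. By Proposition \ref{propA0}(c), $\cA_0$ generates a $C_0$-semigroup of quasi-contractions on $H^1\times L^2$ with $\|e^{t\cA_0}\|\le e^{\omega t}$. The constant family $\cA_0$ is therefore trivially stable with constants $M=1$ and $\omega$.

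By Corollary \ref{corperA} we may write $\cA(t)=\cA_0+\cB_\vep(t)$. Here $\cB_\vep(t)\in\ccB(H^1\times L^2)$ and
\[
\sup_{t\ge0}\|\cB_\vep(t)\|\le C|\vep|.
\]
The bounded perturbation theorem for semigroups (Pazy, Theorem 3.1.1) then shows that, for each fixed $t$, $\cA(t)$ generates a $C_0$-semigroup on the common domain $D_0$, with
\[
\|e^{s\cA(t)}\|\le e^{(\omega+C|\vep|)s}.
\]
In particular $\cA(t)$ is an infinitesimal generator for every $t\ge0$.

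Stability of the whole family is a statement about products of resolvents, not about a single operator. A quasi-contraction bound makes this straightforward. By Hille--Yosida, each generator satisfies
\[
\|(\lambda-\cA(t_j))^{-1}\|\le(\lambda-\omega-C|\vep|)^{-1}\qquad\text{for all }\lambda>\omega+C|\vep|.
\]
This bound is the same for every $j$, because the norm is the one fixed on $H^1\times L^2$ and does not depend on $t$. For any ordered collection $0\le t_1\le\dots\le t_k$ we therefore get
\[
\Big\|\prod_{j=1}^k(\lambda-\cA(t_j))^{-1}\Big\|\le(\lambda-\omega-C|\vep|)^{-k}.
\]
This is exactly the definition of a stable family with $M=1$ and stability exponent $\omega+C|\vep|$, which is the claim of the lemma.

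The only delicate point is that the quasi-contractivity of $e^{t\cA_0}$ in Proposition \ref{propA0}(c) holds in the norm on $H^1\times L^2$ for which the estimate was proved. The bound on $\cB_\vep(t)$ must be measured in that same norm. If the two norms differed, we would obtain only an $M\neq1$ estimate; since the uniform bound of Corollary \ref{corperA} is computed directly in $\|\cdot\|_{H^1\times L^2}$, this problem does not arise.

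A second small check is that $C$ is uniform in $t$. This follows from $\|\chi\|_{C^0_tH^2_x}=O(\vep)$ (Theorem \ref{theorem_existence}) together with the boundedness of $H$ and $\sin\brt$, as already used in Corollary \ref{corperA}.
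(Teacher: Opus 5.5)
Your proof is correct and takes the same route as the paper. You write $\cA(t)=\cA_0+\cB_\vep(t)$, use the quasi-contraction bound for $e^{t\cA_0}$ from Proposition \ref{propA0}(c) and the uniform bound $\sup_t\|\cB_\vep(t)\|\le C|\vep|$ from Corollary \ref{corperA}, and obtain stability with $M=1$ and exponent $\omega+C|\vep|$. The only difference is that the paper just cites Theorem 5.2.3 in Pazy, while you unpack it for $M=1$ (bounded perturbation of each generator, then Hille--Yosida, then submultiplicativity of the resolvent products), which is a valid and slightly more self-contained version of the same argument.
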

\begin{proof}
From Proposition \ref{propA0} (c), we know that $\cA_0 : D_0 \subset H^1 \times L^2 \to H^1 \times L^2$ is the infinitesimal generator of a $C_0$-semigroup, $\{ e^{t\cA_0} \}_{t \geq 0}$, of quasi-contractions in $H^1 \times L^2$ and that there exists $\omega \in \R$ such that
\[
\left\| e^{t \cA_0} U \right\|_{H^1 \times L^2} \leq e^{\omega t} \| U \|_{H^1 \times L^2},
\]
for all $U \in H^1 \times L^2$ and all $t \geq 0$. Therefore, it constitutes a stable (actually, constant) family of infinitesimal generators with stability constants $M \equiv 1$ and $\omega \in \R$. From Corollary \ref{corperA} we know that $\cB_\vep(t) = \cA(t) - \cA_0$ satisfies $\| \cB_\vep(t) \| \leq C |\vep|$ for some uniform $C > 0$ and all $t$. Hence, we invoke Theorem 5.2.3 in Pazy \cite{Pa83} to conclude that $\cA_0 + \cB_\vep(t)$ is a stable family of infinitesimal generators with stability constants $M = 1$ and $\omega + C|\vep| \in \R$. This implies, in particular, that
\begin{equation}
\label{resAt}
\rho(\cA(t)) \supset (\omega + C|\vep|, \infty), \quad \forall \, t \geq 0,
\end{equation}
and that,
\[
\Big\| \prod_{j=1}^k \cR(\lambda; \cA(t_j)) \Big\| \leq (\lambda - \omega - C|\vep|)^{-k},
\]
for $\lambda \in \R$, $\lambda > \omega + C|\vep| $ and for every finite (ordered) sequence $0 \leq t_1 \leq \ldots \leq t_k$, $k \in \N$ (see Definition 5.2.1 in Pazy \cite{Pa83}).
\end{proof}

We are now ready to prove the existence of the evolution system (or propagator) $\cU$.

\begin{theorem}
\label{thmexistpropag}
There exists a unique evolution system $\cU = \cU(t,s)$ in $H^1 \times L^2$, for all $0 \leq s \leq t \leq T$, satisfying:
\begin{itemize}
\item[\rm{(E$_1$)}] \phantomsection
\label{E1} $\| \cU(t,s) \| \leq e^{(\omega + C|\vep|)(s-t)}$ for all $0 \leq s \leq t$.
\item[\rm{(E$_2$)}] \phantomsection
\label{E2} There holds
\[
\frac{\partial^+}{\partial t} \cU(t,s) V \Big|_{t = s} = \cA(s) V,
\]
for all $V \in D_0 = H^2 \times H^1$ and a.e. on $\, 0 \leq s \leq t$.
\item[\rm{(E$_3$)}] \phantomsection
\label{E3} There holds 
\[
\frac{\partial}{\partial s} \cU(t,s) V = - \cU(t,s) \cA(s) V,
\]
for all $V \in D_0$ and a.e. on $\, 0 \leq s \leq t$.
\item[\rm{(E$_4$)}] \phantomsection
\label{E4} $\cU(t,s) D_0 \subset D_0$ for all $0 \leq s \leq t$.
\item[\rm{(E$_5$)}] \phantomsection
\label{E5} For $V \in D_0$, $\cU(t,s) V$ is continuous in $D_0$ for $0 \leq s \leq t$.
\end{itemize}
Moreover, for every $V \in D_0 = H^2 \times H^1$, $\cU(t,s) V$ is the unique $D_0$-valued solution to the initial value problem
\begin{equation}
\label{invalprob}
\left\{
\begin{aligned}
\frac{dU}{dt} &= \cA(t) U, & \quad 0 \leq s \leq t,\\
U(s) &= V.
\end{aligned}
\right.
\end{equation}
\end{theorem}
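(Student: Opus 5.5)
The plan is to verify the three hypotheses \hyperref[H1]{(H$_1$)}, \hyperref[H2p]{(H$_2^+$)} and \hyperref[H3]{(H$_3$)} with $X = H^1\times L^2$ and $Y = D_0 = H^2\times H^1$. Then Theorem 5.4.8 in Pazy \cite{Pa83}, the hyperbolic case of \S 5.5.3, gives a unique evolution system with properties (E$_1$)--(E$_5$) and the stated uniqueness of $D_0$-valued solutions to \eqref{invalprob}. Hypothesis (H$_1$) is exactly Lemma \ref{lemstableA}: the stability constants $M=1$ and $\omega + C|\vep|$ give the bound in (E$_1$), read with $t-s\ge 0$ in the exponent.

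For (H$_3$), write $\cA(t) = \cA_0 + \cB_\vep(t)$. The operator $\cA_0 \in \ccB(D_0, X)$ because $D_0$ carries the graph norm, and by Corollary \ref{corperA} $\cB_\vep(t)\in\ccB(X)\subset \ccB(Y,X)$. Norm continuity in $t$ then reduces to continuity of $t\mapsto \widetilde{\cB}_\vep(t)$ in $\ccB(H^1,L^2)$. For this I would rerun the estimate of Lemma \ref{lemperL} on differences $\cL_{\brt}(t)-\cL_{\brt}(t')$. Each term is a product of the coefficients $\sin\brt$, $\cos\brt$ and $c_{\brt}$ with $(1+(-\Delta)^{1/2})$ acting on $H^1$ functions. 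Using $H^1\hookrightarrow L^\infty$, boundedness of $1+(-\Delta)^{1/2}:H^1\to L^2$, and the continuity of $t\mapsto \brt(\cdot,t)$ in $H^2_{\mathrm{loc}}$-type norms (from $X\in C^2$, $\chi\in C^0_tH^2_x$ and $\theta_0\in W^{2,\infty}$, $\partial_x\theta_0\in H^k$), the differences are bounded by a modulus of continuity times $\|u\|_{H^1}$. The term $\vep H(t)\sin\brt$ is handled the same way since $H$ is continuous.

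The main obstacle is (H$_2^+$). Taking $\cQ(t) = \lambda_0 I - \cA(t)$ for a fixed $\lambda_0 > \omega + C|\vep|$ would require differentiability of $\cA(t)$ in $t$, which needs $\chi\in C^1_tH^2_x$; we do not have this. I will instead take the time-independent isomorphism $\cQ := \lambda_0 I - \cA_0 : D_0 \to X$, which is bijective by Proposition \ref{propA0}(c) and \eqref{resAt} and trivially $C^1$ in $t$. Then
\[
\cQ\cA(t)\cQ^{-1} = \cA(t) + \llb \cQ, \cB_\vep(t)\rrb\cQ^{-1},
\]
and since $\cQ$ commutes with $\cA_0$ this reduces to
\[
\cA(t) - \llb \cA_0, \cB_\vep(t)\rrb \cQ^{-1}.
\]
So one must show that $\cB(t) := -\llb \cA_0,\cB_\vep(t)\rrb\cQ^{-1}$ is bounded on $X$ and strongly continuous. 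Since $\cQ^{-1}$ maps $X$ into $D_0$, it suffices that $\cA_0\cB_\vep(t)$ and $\cB_\vep(t)\cA_0$ are bounded from $D_0$ to $X$. The second is clear. The first requires $\widetilde{\cB}_\vep(t): H^2\to H^1$, with the lower-left block $-\cL_0\cdot 0$ and the upper block $I$ applied to $\widetilde{\cB}_\vep(t)u$. This is one extra derivative over Lemma \ref{lemperL}, obtained from $\chi\in C^0_tH^2_x$, the smoothness of $\theta_0$, and the fact that the commutator of $\partial_x$ with multiplication by an $H^2$ coefficient is controlled. Strong continuity follows as in (H$_3$). If this regularity count fails at the top order, I would fall back on Pazy's condition (H$_2$) with $Y$-stability, which again relies on this relative boundedness. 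Having verified the hypotheses, I invoke Pazy's theorem to conclude.
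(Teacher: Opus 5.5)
Your proposal is correct and follows the paper's route: (H$_1$) from Lemma \ref{lemstableA}, then a time-independent isomorphism $\cQ=\lambda_0 I-\cA_0$ (the paper takes $\lambda_0=1$) with the commutator identity for (H$_2^+$), then constant domains for (H$_3$), and finally Pazy's hyperbolic-case theorem. You are in fact more careful than the paper: it cites only Lemma \ref{lemperL} (an $H^1\to L^2$ bound) to assert uniform boundedness of the commutator, whereas, as you observe, the commutator is bounded only as a map $D_0\to H^1\times L^2$, which requires $\widetilde{\cB}_\vep(t)\in\ccB(H^2,H^1)$ (this does follow from $\chi\in C^0_tH^2_x$); you also check the norm continuity in (H$_3$) that the paper calls evident, and correctly read (E$_1$) with $t-s$ in the exponent.
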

\begin{proof}
From Theorem 5.4.6 in Pazy \cite{Pa83}, it then suffices to verify conditions \hyperref[H1]{\rm{(H$_1$)}}, \hyperref[H2p]{\rm{(H$_2^+$)}} and \hyperref[H3]{\rm{(H$_3$)}} for the family of operators $\cA(t)$.

From Lemma \ref{lemstableA}, we already know that condition \hyperref[H1]{\rm{(H$_1$)}} holds. In order to verify \hyperref[H2p]{\rm{(H$_2^+$)}}, let us observe that, since $1 \in \rho(\cA_0)$ (see Proposition \ref{propA0} (b)), we can write
\[
(I - \cA_0) \cA(t) (I - \cA_0)^{-1} = \cA(t) + \llb I- \cA_0, \cA(t) \rrb (I - \cA_0)^{-1}.
\]
Let us examine the family of commutators, $\llb I- \cA_0, \cA(t) \rrb$. From the expressions for $\cA_0$ and $\cA(t)$ we clearly obtain
\[
\begin{aligned}
\llb I- \cA_0, \cA(t) \rrb  &= (I - \cA_0) \cA(t) - \cA(t) (I - \cA_0)\\
&= \begin{pmatrix} \cL_{\brt} - \cL_0 + \vep H(t) \sin \brt & 0 \\ - \nu (\cL_{\brt} - \cL_0) - \nu \vep H(t) \sin \brt & \cL_{\brt} - \cL_0 + \vep H(t) \sin \brt \end{pmatrix} \\
&= \begin{pmatrix} - \widetilde{\cB}_\vep(t) & 0 \\ - \nu (\cL_{\brt} - \cL_0) - \nu \vep H(t) \sin \brt  & -\widetilde{\cB}_\vep(t)\end{pmatrix}.
\end{aligned}
\]

Thanks to Lemma \ref{lemperL} and Corollary \ref{corperA} it is clear that $\llb I- \cA_0, \cA(t) \rrb$ is a family of uniformly bounded operators satisfying
\[
\| \, \llb I- \cA_0, \cA(t)\rrb \, \| \leq C |\vep|,
\]
for some uniform $C > 0$ and all $t \geq 0$. Since $(I - \cA_0)^{-1} : D_0 \to H^1 \times L^2$ is bounded and $\llb I- \cA_0, \cA(t) \rrb$ is uniformly bounded for all $t$, we conclude that $\cQ(t) := (I- \cA_0) : D_0 \to H^1 \times L^2$ is clearly continuously differentiable in $t$ (actually, constant) and constitutes a family of isomorphisms for which 
\[
\llb I- \cA_0, \cA(t) \rrb (I - \cA_0)^{-1} : D_0 \to H^1 \times L^2,
\]
is a family of strongly continuous operators on $H^1 \times L^2$. This verifies condition \hyperref[H2p]{\rm{(H$_2^+$)}}.

Finally, condition \hyperref[H3]{\rm{(H$_3$)}} is evidently satisfied because the domains are constant in $t$, $D(\cA(t)) = D_0 = H^2 \times H^1$, and $\cA(t)$ is a bounded operator from $D_0$ to $H^1 \times L^2$. This finishes the proof.
\end{proof}

\subsection{Translation invariance}

In this section we show that, thanks to the translation invariance of the time-periodic N\'eel wall, its space derivative is an invariant direction for the propagator. To that end, first we need to verify that, as a consequence of the method of construction of the solution, the latter carries higher regularity. 

\begin{lemma}
\label{lemhigherreg}
The constructed time-periodic solution $\brt = \brt(x,t)$ satisfies:
\begin{itemize}
\item[\rm{(a)}] $\brt \in (C^2_t H^1_x \cap C^1_t H^2_x \cap C^0_t H^3_x) (Q_T)$.
\item[\rm{(b)}] For all fixed $t \in [0,T]$ there holds $\brt(\cdot, t) \in H^3(\R)$.
\end{itemize}
\end{lemma}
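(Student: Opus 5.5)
We argue separately for the two summands of the decomposition \eqref{def_theta}, $\brt(x,t) = \theta_0(x+X(t)) + \chi(x,t)$, and gain one extra spatial derivative in each.

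\emph{The translated profile.} By Proposition \ref{propNeelw} (c), $\partial_x\theta_0 \in H^k$ for every $k$. By Theorem \ref{theorem_existence}, $X \in C^2$. Differentiating in time gives
\[
\partial_t[\theta_0(\cdot+X)] = \dot X\,\partial_x\theta_0(\cdot+X), \qquad \partial_t^2[\theta_0(\cdot+X)] = \ddot X\,\partial_x\theta_0(\cdot+X) + \dot X^2\,\partial_x^2\theta_0(\cdot+X).
\]
Translation is strongly continuous on every $H^k$, so these maps are continuous in $t$ with values in any $H^k$. The profile itself is not in $L^2$, because it tends to $\pm\pi/2$ at infinity. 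Its spatial derivatives, however, lie in every $H^k$. So the derivative part of the $H^l$ norms is controlled, and this is the sense in which membership in $C^0_tH^3_x$ should be read for the non-decaying part, as elsewhere in the paper.

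\emph{The correction $\chi$: bootstrap.} This is the real work. We already know $\chi \in D(\cT_0) \subset C^0_tH^2\cap C^1_tH^1\cap C^2_tL^2$. The plan is to set $w := \partial_x\chi$ and show that $w$ lies in the same space. The tool is the commutator argument from the proof of Lemma \ref{lemma_S_H1_bound}, now applied to periodic solutions. Rewrite $\cG(\chi,X,\vep)=0$ as
\[
\cT_0\chi = F := -\partial_x\theta_0(\cdot)\,(\ddot X+\nu\dot X)\text{-type terms} - \big[\nabla\cE(\brt) - \cL_0\chi\big] + \vep H\cos\brt.
\]
More precisely, $F$ collects the terms $-(\partial_t^2+\nu\partial_t)[\theta_0(\cdot+X)]$, the nonlinear remainder $\nabla\cE(\theta_0(\cdot+X)+\chi) - \cL_0\chi$, and $\vep H\cos\brt$. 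The key observation is that $\partial_x^2$ cancels inside $\nabla\cE(\brt)-\cL_0\chi$. What remains are products of smooth functions of $\brt$ with $(1+(-\Delta)^{1/2})$ applied to such products. Consequently $F \in C^0_tH^2_x$ whenever $\chi \in C^0_tH^2_x$, using the algebra property of $H^k$ together with $1+(-\Delta)^{1/2}: H^{k+1}\to H^k$. Differentiating in $x$, $w$ then formally satisfies
\[
\partial_t^2 w + \nu\partial_t w + \cL_0 w = \partial_x F - \cJ\chi, \qquad \cJ = \llb\partial_x,\cL_0\rrb .
\]
Here $\cJ: H^2\to H^1$ is bounded, so the right-hand side lies in $C^0_tH^1_x$.

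\emph{Making the bootstrap rigorous.} We project the right-hand side onto $\partial_x\theta_0^\perp$ and apply Theorem \ref{dynamical_linearized}. This yields a $T$-periodic solution $\tilde w \in C^0_tH^2\cap C^1_tH^1\cap C^2_tL^2$, unique modulo the kernel direction $\partial_x\theta_0$. The component along $\partial_x\theta_0$ is handled by the scalar ODE $\ddot\lambda+\nu\dot\lambda = g$ with zero-mean $g$, exactly as in the proof of Lemma \ref{lemma_Gamma_inv}.

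The main obstacle is identifying $\tilde w$ with $\partial_x\chi$, since we do not yet know that $\partial_x\chi$ lies in the solution class. We plan to do this with difference quotients $D_h\chi = (\chi(\cdot+h)-\chi)/h$. Each $D_h\chi$ solves the same equation, with $D_h$ in place of $\partial_x$ and a difference-quotient commutator. That commutator is bounded $H^2\to H^1$ uniformly in $h$, since $\theta_0\in W^{2,\infty}$ and the coefficients are smooth. The representation formula $u = [\int_{-\infty}^t\tcS(t-s)(0,f)\,ds]_1$ together with Corollary \ref{corollary_S_H1} then gives uniform bounds on $D_h\chi$ in $C^0_tH^2\cap C^1_tH^1\cap C^2_tL^2$. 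Passing to the limit $h\to0$ yields $\partial_x\chi$ in this space, and hence $\chi \in C^0_tH^3\cap C^1_tH^2\cap C^2_tH^1$.

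Combining this with the first step gives (a). Part (b) follows immediately, since $C^0_tH^3_x$ means $\brt(\cdot,t)$ has $H^3$ regularity for each fixed $t$ (in the sense above for the profile part).
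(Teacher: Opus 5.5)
Your route differs from the paper's, and it has a gap at the claim that $F\in C^0_tH^2_x$ whenever $\chi\in C^0_tH^2_x$. The cancellation of $\partial_x^2\chi$ (and, via \eqref{ELeq}, of the smooth part coming from $\theta_0(\cdot+X)$) does not make $F$ lower order. The remainder $\nabla\cE(\brt)-\cL_0\chi$ still contains terms $a\,(1+(-\Delta)^{1/2})(b\,\chi)$, because $\theta_0(\cdot+X)\neq\theta_0$, and terms $a\,(1+(-\Delta)^{1/2})(b\,\chi^2+\dots)$ from the quadratic remainder, with smooth $a,b$. These have the same order as $\cS_{\theta_0}$. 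By the very mapping property you quote, $1+(-\Delta)^{1/2}\colon H^{k+1}\to H^k$, they lie in $H^1$ but not in general in $H^2$ when $\chi\in H^2$; getting $H^2$ would need $\chi\in H^3$, which is the claim. Hence $\partial_xF-\cJ\chi$ is only in $C^0_tL^2_x$, and Theorem~\ref{dynamical_linearized}, which needs forcing in $C^0_tH^1_\perp$, does not apply. The representation formula then returns $w$ only in the energy class $C^0_tH^1_x\cap C^1_tL^2_x$, which you already had. The difference-quotient version fails for the same reason: $D_hF$ is not bounded in $C^0_tH^1_x$ uniformly in $h$, so Corollary~\ref{corollary_S_H1} gives no uniform bound on $D_h\chi$. (A smaller point: $\partial_x$ flips the $\widehat{\pi}$-symmetry, $w(x,t)=w(-x,t+T/2)$. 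The projected forcing is therefore $T/2$-periodic rather than $T/2$-antisymmetric, and its zero mean comes from the periodicity of $w$, not from the argument of Lemma~\ref{lemma_Gamma_inv}.)

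The missing idea is that the extra regularity has to come from constructing the solution in a smoother space, not from the equation satisfied by the given $\chi$. Keeping the principal nonlocal part on the left, i.e.\ working with \eqref{dstar}, is not enough by itself. The periodic problem linearized at $\brt$ has $\partial_x\brt$ in its kernel, so ``uniqueness up to the kernel'' cannot identify $\partial_x\brt$ with a smooth solution unless you add a Lyapunov--Schmidt step. The clean fix is to rerun the implicit function theorem with $D(\cT_0)_{\widehat{\pi}}$ replaced by the space of $\chi$ such that both $\chi$ and $\partial_x\chi$ lie in the $D(\cT_0)$-class. Concretely, $\chi\in C^0_tH^3_x\cap C^1_tH^2_x\cap C^2_tH^1_x$ with $(\partial_t^2-\partial_x^2)\chi\in C^0_tH^2_x$, and the target is $C^0_tH^2_x$. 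Then $\cG$ maps into this target. Your commutator argument is exactly what shows that $\Gamma_0$ remains an isomorphism there, because $\cT_0$ has fixed smooth coefficients, $\cJ\colon H^2\to H^1$ is bounded, and $\ker\cT_0$ is spanned by the smooth function $\partial_x\theta_0$. Local uniqueness of the zeros of $\cG$ in the original spaces then identifies the new solution with $(\chi,X)$ for small $\vep$. The paper argues differently: it differentiates \eqref{star} and solves \eqref{dstar} for $\partial_x^3\brt$. That step treats $D^2\cE(\brt)\partial_x\brt$ as an element of $C^1_tL^2_x$, which already presupposes $\partial_x^3\brt\in L^2$, so it is not a shortcut around this issue. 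Your handling of the non-decaying profile part and of (b) is fine.
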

\begin{proof}
It follows from a bootstrapping argument on the profile equation. Indeed, by construction we know that $\brt \in (C^2_tL^2_x\cap C^1_tH^1_x \cap C^0_tH^2_x) (Q_T)$ is a solution to 
\begin{equation}
\label{star}
\partial_t^2 \brt + \nu \partial_t \brt + \nabla \cE(\brt)  = \vep H(t) \cos \brt.
\end{equation}

Upon differentiation of \eqref{star} and noticing that $\partial_t \partial_x \brt \in C_t^0 L_x^2$, $D^2\cE(\brt) \partial_x \brt \in C_t^1 L_x^2$ and $\vep H(t) (\sin \brt) \partial_x \brt \in C_t^1 L_x^2$, we obtain
\[
\partial_t^2 \partial_x \brt = - \nu \partial_t \partial_x \brt - D^2 \cE(\brt) \partial_x \brt - \vep H(t) (\sin \brt) \partial_x \brt,
\]
yielding $\brt \in C_t^2 H_x^1(Q_T)$. Moreover, since $D^2(\cE)(\brt) = \cL_{\brt}$ we notice that $\partial_x \brt$ is actually a solution of the linearized equation \eqref{linequ},
\begin{equation}
\label{dstar}
\partial_t^2 \partial_x \brt + \nu \partial_t \partial_x \brt + \cL_{\brt} \partial_x \brt + \vep H(t) (\sin \brt) \partial_x \brt = 0.
\end{equation}
If we substitute $\cL_{\brt} = - \partial_x^2 + \cS_{\brt} + c_{\brt} I$ into \eqref{dstar} we obtain
\[
\partial_x^3 \brt = \partial_t^2 \partial_x \brt + \nu \partial_t \partial_x \brt + \cS_{\brt} \partial_x \brt + c_{\brt} \partial_x \brt + \vep H(t) (\sin \brt) \partial_x \brt.
\]
Note that the right hand side of last equation belongs to $C_t^0 L_x^2$, inasmuch as $\partial_t^2 \partial_x \brt \in C_t^0 L_x^2$ and $\partial_t \partial_x \brt \in C_t^1 L_x^2$ (because $\brt \in C_t^2 H_x^1$), $\cS_{\brt} \partial_x \brt \in C_t^0 L_x^2$ (because $\cS_{\brt} : L^2 \to L^2$), $\partial_x \brt \in C_t^0 H_x^1$ and $\vep H(t) (\sin \brt) \partial_x \brt \in C_t^0 H_x^1$.Therefore, $\partial_x^3 \brt \in C_t^0 L_x^2$ and we conclude that $\brt \in C_t^0 H_x^3(Q_T)$ (which, in turn, implies (b)). The proof that $\brt \in C_t^1 H_x^2 (Q_T)$ is analogous and we omit it.
\end{proof}

As a by-product of the last result we have the following observation.
\begin{corollary}
\label{corregT}
Let us define
\begin{equation}
\label{defThetab}
\overline{\Theta} (x,t) := \big( \partial_x \brt (x,t), \, \partial_t \partial_x \brt(x,t) \big),
\end{equation}
for $x \in \R$, $t \in [0,T]$. Then there holds
\[
\overline{\Theta} \in \big( C_t^2 L_x^2 \cap C_t^1 H_x^1 \cap C_t^0 H_x^2\big)(Q_T) \times \big( C_t^1 L_x^2 \cap C_t^0 H_x^1\big)(Q_T),
\]
and for each fixed $t \in [0,T]$ we have
\[
\overline{\Theta}(\cdot, t) \in H^2(\R) \times H^1(\R).
\]
\end{corollary}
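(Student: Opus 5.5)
The statement is a direct consequence of Lemma \ref{lemhigherreg}, so the proof reduces to reading off the regularity of each component of $\overline{\Theta}$ from the regularity of $\brt$. I will use the elementary fact that $\partial_x : H^{k+1}(\R) \to H^k(\R)$ is a bounded linear operator for every $k \geq 0$. Consequently, if $f \in C^j(\mathbb{T}_T; H^{k+1}(\R))$, then $\partial_x f \in C^j(\mathbb{T}_T; H^k(\R))$, and $\partial_t^i \partial_x f = \partial_x \partial_t^i f$ for $i \leq j$. Indeed, a bounded linear map commutes with time derivatives taken in the Banach-space sense and preserves continuity in $t$.

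For the first component, $\partial_x \brt$, I apply this fact to each of the three memberships in Lemma \ref{lemhigherreg} (a), namely $\brt \in C^2_t H^1_x$, $\brt \in C^1_t H^2_x$ and $\brt \in C^0_t H^3_x$. This gives $\partial_x \brt \in C^2_t L^2_x \cap C^1_t H^1_x \cap C^0_t H^2_x$.

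For the second component, $\partial_t \partial_x \brt = \partial_x \partial_t \brt$, I use $\partial_t \brt \in C^1_t H^1_x \cap C^0_t H^2_x$, which follows from $\brt \in C^2_t H^1_x \cap C^1_t H^2_x$. Applying $\partial_x$ then yields $\partial_t \partial_x \brt \in C^1_t L^2_x \cap C^0_t H^1_x$. Together with the first component, this is the claimed space for $\overline{\Theta}$.

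The pointwise-in-time statement follows by evaluating the $C^0_t$ memberships at a fixed $t$: $\partial_x \brt(\cdot,t) \in H^2(\R)$ and $\partial_t\partial_x \brt(\cdot,t) \in H^1(\R)$. Hence $\overline{\Theta}(\cdot,t) \in H^2 \times H^1 = D_0$; for the first entry this is consistent with Lemma \ref{lemhigherreg} (b).

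The only point needing care is the commutation $\partial_t \partial_x = \partial_x \partial_t$ in the vector-valued setting. It is justified by the boundedness of $\partial_x$ between the relevant Sobolev spaces: differentiating $t\mapsto \partial_x \brt(t)$ in $H^k$ amounts to applying $\partial_x$ to the difference quotients of $\brt$ in $H^{k+1}$. I do not expect any further obstacle.
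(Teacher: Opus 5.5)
Your argument is the one the paper intends: it states the corollary as an immediate by-product of Lemma \ref{lemhigherreg}, and you spell out the component-wise reading of part (a) (applying $\partial_x$ and commuting it with $\partial_t$) that the paper leaves implicit. One cosmetic caveat: $\brt(\cdot,t)\notin L^2(\R)$ because of its limits $\pm\pi/2$, so the memberships in Lemma \ref{lemhigherreg} (a) should be read for $\brt-\theta_0$ (using $\partial_x\theta_0\in H^k$ for all $k$), and with that reading your argument goes through unchanged.
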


\begin{lemma}[translation invariance]
\label{lemtransinv}
$\overline{\Theta}$ is a solution to $\partial_t \overline{\Theta} = \cA(t) \overline{\Theta}$, $t \in [0,T]$.
\end{lemma}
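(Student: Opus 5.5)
The plan is to differentiate the profile equation \eqref{star} with respect to $x$ and read off the two components of $\partial_t \overline{\Theta} = \cA(t)\overline{\Theta}$.

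The first component of $\partial_t \overline{\Theta} = \cA(t)\overline{\Theta}$ reads $\partial_t(\partial_x \brt) = \partial_t\partial_x\brt$. This holds trivially, since the first row of $\cA(t)$ is $(0, I)$. By Corollary \ref{corregT} both sides lie in $C^0_t H^1_x$, so the identity holds in $H^1$ for each $t$.

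The second component reads
\[
\partial_t(\partial_t\partial_x \brt) = -\big(\cL_{\brt} + \vep H(t)\sin\brt\big)\partial_x\brt - \nu\,\partial_t\partial_x\brt ,
\]
which is exactly \eqref{dstar}. To justify \eqref{dstar}, I would differentiate \eqref{star} in $x$. By Lemma \ref{lemhigherreg}, $\brt\in C^2_tH^1_x\cap C^0_tH^3_x$, so $\partial_x$ commutes with $\partial_t^2+\nu\partial_t$ in $C^0_tL^2_x$. For the nonlinear terms one needs
\[
\partial_x \nabla\cE(\brt) = D^2\cE(\brt)\,\partial_x\brt = \cL_{\brt}\partial_x\brt
\quad\text{and}\quad
\partial_x(\vep H\cos\brt) = -\vep H\sin\brt\,\partial_x\brt .
\]
The second is the chain rule. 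The first is the translation invariance of $\cE$: writing $\tau_h\brt(x)=\brt(x+h)$, one has $\nabla\cE(\tau_h\brt)=\tau_h\nabla\cE(\brt)$, because $(-\Delta)^{1/2}$ commutes with translations. Differentiating at $h=0$ gives the identity, since $(\tau_h\brt-\brt)/h\to\partial_x\brt$ in $H^2$, which is available because $\brt(\cdot,t)\in H^3$. Alternatively, one can compute directly from $\nabla\cE(\theta)=-\partial_x^2\theta+\sin\theta(1+(-\Delta)^{1/2})\cos\theta$, using $\partial_x(1+(-\Delta)^{1/2})=(1+(-\Delta)^{1/2})\partial_x$. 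The product rule then produces precisely the terms $-\partial_x^3\brt+\cS_{\brt}\partial_x\brt-c_{\brt}\partial_x\brt$.

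The main obstacle is the regularity bookkeeping for this step. The term $\partial_x[\sin\brt\,(1+(-\Delta)^{1/2})\cos\brt]$ must be justified in $L^2$. Here $\cos\brt$ is not in $L^2$ near infinity, since $\cos\brt\to0$ but with merely $H^2$-type decay. I would handle this by noting $\cos\brt(\cdot,t)=\cos\theta_0(\cdot+X(t))+O(\chi)$, which lies in $H^2$ since $\cos\theta_0\in H^2$ and $\chi\in H^3$. Hence $(1+(-\Delta)^{1/2})\cos\brt\in H^1$, and the commutation with $\partial_x$ is legitimate on the Fourier side.

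With both components verified pointwise in $t\in[0,T]$, and Corollary \ref{corregT} guaranteeing $\overline{\Theta}(t)\in D_0$ with $\partial_t\overline{\Theta}\in C^0_t(H^1\times L^2)$, the claim follows. By the uniqueness part of Theorem \ref{thmexistpropag}, this gives $\overline{\Theta}(t)=\cU(t,s)\overline{\Theta}(s)$.
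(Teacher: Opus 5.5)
Your proposal is correct and follows the paper's route. The paper also reads off the two components of $\partial_t\overline{\Theta}=\cA(t)\overline{\Theta}$ from \eqref{dstar}, which it obtains in the proof of Lemma \ref{lemhigherreg} by differentiating \eqref{star} in $x$ and invoking $D^2\cE(\brt)=\cL_{\brt}$; your translation-invariance argument for $\partial_x\nabla\cE(\brt)=\cL_{\brt}\partial_x\brt$ supplies the justification the paper leaves implicit.

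Two small slips. First, in your alternative direct computation the sign is off: by \eqref{ELeq} the gradient is $\nabla\cE(\theta)=-\partial_x^2\theta-\sin\theta\,(1+(-\Delta)^{1/2})\cos\theta$, and only this sign yields $-\partial_x^3\brt+\cS_{\brt}\partial_x\brt-c_{\brt}\partial_x\brt$. Second, $\cos\brt$ \emph{is} in $L^2$, as your own $H^2$ argument shows, so the sentence claiming otherwise should be dropped.
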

\begin{proof}
The result is a direct consequence of $\brt$ being a solution to the linearized equation \eqref{linequ}. Indeed, from Corollary \ref{corregT} we know that $\overline{\Theta}(\cdot,t) \in D_0 = D(\cA(t))$ for all fixed $t \in [0,T]$. Since $\brt$ solves equation \eqref{dstar} we readily obtain
\[
\begin{aligned}
\partial_t \overline{\Theta} = \partial_t \begin{pmatrix} \partial_x \brt \\ \partial_t \partial_x \brt  \end{pmatrix} &= \begin{pmatrix} \partial_t \partial_x \brt  \\ -\cL_{\brt} \partial_x \brt - \vep H(t) (\sin \brt) \partial_x \brt - \nu \partial_t \partial_x \brt \end{pmatrix} \\
&= \begin{pmatrix} 0 & I \\ - \cL_{\brt} - \vep H(t) (\sin \brt) & - \nu I \end{pmatrix} \begin{pmatrix} \partial_x \brt \\ \partial_t \partial_x \brt \end{pmatrix} = \cA(t) \overline{\Theta}.
\end{aligned}
\]
\end{proof}

\begin{corollary}
\label{corpropsol}
$\cU(T,0) \overline{\Theta}(\cdot,0) = \overline{\Theta}(\cdot,0) \in H^2 \times H^1$.
\end{corollary}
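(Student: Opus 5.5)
The plan is to combine Lemma \ref{lemtransinv} with the uniqueness part of Theorem \ref{thmexistpropag}, and then use the $T$-periodicity of $\brt$.

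\emph{Step 1 (membership in the domain).} By Corollary \ref{corregT}, $\overline{\Theta}(\cdot,0) \in H^2 \times H^1 = D_0$. Moreover, $t \mapsto \overline{\Theta}(\cdot,t)$ is continuous with values in $D_0$ and continuously differentiable with values in $H^1 \times L^2$. This follows because $\partial_x\brt \in C^1_tH^1_x$ and $\partial_t\partial_x\brt \in C^1_tL^2_x$.

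\emph{Step 2 (identification with the propagator).} Lemma \ref{lemtransinv} shows that $\overline{\Theta}$ is a $D_0$-valued classical solution of $\partial_t U = \cA(t)U$ on $[0,T]$ with $U(0) = \overline{\Theta}(\cdot,0)$. By the final assertion of Theorem \ref{thmexistpropag} (with $s=0$ and $V=\overline{\Theta}(\cdot,0)$), $\cU(t,0)\overline{\Theta}(\cdot,0)$ is the unique $D_0$-valued solution of this initial value problem. Hence $\cU(t,0)\overline{\Theta}(\cdot,0)=\overline{\Theta}(\cdot,t)$ for all $t\in[0,T]$. Uniqueness can also be checked directly: set $W(t) = \overline{\Theta}(t) - \cU(t,0)\overline{\Theta}(0)$ and differentiate $r \mapsto \cU(t,r)W(r)$ using (E$_3$). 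This gives a zero derivative, so $W(t) = \cU(t,0)W(0) = 0$.

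\emph{Step 3 (periodicity).} Since $\brt(\cdot,t)$ is $T$-periodic in $t$ (it is defined on $\mathbb{T}_T$ by Theorem \ref{theorem_existence}), so are $\partial_x\brt$ and $\partial_t\partial_x\brt$. Therefore $\overline{\Theta}(\cdot,T)=\overline{\Theta}(\cdot,0)$. Setting $t=T$ in Step 2 yields $\cU(T,0)\overline{\Theta}(\cdot,0)=\overline{\Theta}(\cdot,0)$.

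The only delicate point is the regularity needed in Step 2 to apply the uniqueness statement. One must confirm that $\overline{\Theta}$ is differentiable in the $H^1\times L^2$ topology, rather than merely pointwise. This is exactly the content of Lemma \ref{lemhigherreg} and Corollary \ref{corregT}, so I would simply cite these results there.
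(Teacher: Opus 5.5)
Your proposal is correct and follows essentially the paper's route. The paper differentiates $s\mapsto\cU(t,s)\overline{\Theta}(\cdot,s)$ using (E$_3$) and Lemma \ref{lemtransinv} to obtain $\cU(t,s)\overline{\Theta}(\cdot,s)=\overline{\Theta}(\cdot,t)$, which is exactly your ``direct check''; your primary appeal to the uniqueness clause of Theorem \ref{thmexistpropag} is equivalent, and you usefully make explicit the step $\overline{\Theta}(\cdot,T)=\overline{\Theta}(\cdot,0)$ from $T$-periodicity of $\brt$, which the paper uses only implicitly when evaluating at $t=T$, $s=0$.
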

\begin{proof}
From basic properties of the propagator and from translation invariance we have
\[
\begin{aligned}
\frac{\partial}{\partial s} \big( \cU(t,s) \overline{\Theta}(\cdot, s) \big) &= - \cU(t,s) \cA(s) \overline{\Theta}(\cdot, s) + \cU(t,s) \partial_s \overline{\Theta}(\cdot, s)\\
&= - \cU(t,s) \cA(s) \overline{\Theta}(\cdot, s) + \cU(t,s) \cA(s) \overline{\Theta}(\cdot, s) \equiv 0,
\end{aligned}
\]
for all $0 \leq s \leq t \leq T$. Hence, $\cU(t,s) \overline{\Theta}(\cdot, s) = \cU(t,t) \overline{\Theta}(\cdot, t) = \overline{\Theta}(\cdot, t)$ for all $0 \leq s \leq t \leq T$. Evaluating in $t = T$ and $s = 0$ we obtain the result.
\end{proof}

\section{Spectral stability}
\label{secspecstab}

In this Section we prove that the time-periodic N\'eel wall is spectrally stable. Because of the time-periodicity of the linear operators $\cA(t)$, the appropriate notion of spectral stability is given in terms of the \emph{Floquet spectrum}.

\subsection{The Floquet spectrum}

First we define the monodromy map as te propagator operator after exactly one fundamental period.

\begin{definition}
\label{defmonodspec}
Let us define the \emph{monodromy map} of the evolution system from Theorem \ref{thmexistpropag} as
\begin{equation}
\label{defmonodromy}
\cM_\vep := \cU(T,0).
\end{equation}
(We added the subscript notation in order to remind the reader of the dependence on $\vep > 0$.) The \emph{Floquet spectrum} is defined as
\begin{equation}
\label{defFloquet}
\Sigma_F := \{ \lambda \in \C \, : \, e^{T\lambda} \in \sigma(\cM_\vep) \},
\end{equation}
where $\sigma(\cM_\vep)$ denotes the spectrum of the monodromy map $\cM_\vep$.
\end{definition}

\begin{remark}
Notice that since $\cU(t,s) \in \ccB(H^1 \times L^2)$ for all $0 \leq s \leq t$, then $\cM_\vep \in \ccB(H^1 \times L^2)$ and its spectrum is computed with respect to the space $H^1 \times L^2$. Hence, $\lambda \in \Sigma_F$ if and only if $\mu = e^{\lambda T} \in \sigma(\cM_\vep)_{|{H^1 \times L^2}}$. 
\end{remark}

\begin{definition}
\label{defspectralstab}
We say the the time-periodic N\'eel wall, $\brt = \brt(x,t)$, is \emph{spectrally stable} if
\[
\Sigma_F \subset \{ \lambda \in \C \, : \, \Re \lambda \leq 0 \}.
\]
\end{definition}

\begin{remark}
If we interpret the semigroup $\{ e^{t\cA_0} \}_{t\geq 0}$ associated to the linearization around the static N\'eel wall as an evolution system, namely,
\[
\cU_0(t,s) := e^{(t-s) \cA_0}, \qquad 0 \leq s < t
\]
(here $\cA_0$ is constant in $t$ and therefore $T$-periodic), then we can define its monodromy map as simply,
\begin{equation}
\label{defM0}
\cM_0 := \cU_0(T,0) = e^{T\cA_0}.
\end{equation}
\end{remark}

\begin{lemma}
\label{lemsimpleM0}
$\mu = 1$ is a simple isolated eigenvalue of $\cM_0$.
\end{lemma}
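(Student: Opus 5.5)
The plan is to reduce the spectral analysis of $\cM_0=e^{T\cA_0}$ to two pieces: a two-dimensional invariant subspace carrying the translation mode, and the invariant subspace $Z=H^1_\perp\times L^2_\perp$, on which the semigroup decays exponentially by Lemma \ref{exponential_decay}. Throughout I use the sign convention of Lemma \ref{lemma_A_closed}, namely $\cA_0(u,v)=(v,-\cL_0u-\nu v)$, which is the one consistent with Proposition \ref{propA0}; by Remark \ref{remarkAperp}, $\cA_0|_Z=\cA_\perp$.

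\emph{Step 1: decomposition.} Set $E:=(0,\partial_x\theta_0)$ and $W:=\mathrm{span}\{\Theta_0,E\}$. Every $(u,v)\in H^1\times L^2$ splits uniquely as
\[
(u,v)=a\,\Theta_0+b\,E+(u_\perp,v_\perp),
\]
with $a=\langle u,\partial_x\theta_0\rangle_{L^2}/\|\partial_x\theta_0\|^2_{L^2}$, $b$ defined analogously from $v$, and $(u_\perp,v_\perp)\in Z$. Both coefficients are bounded functionals, so $H^1\times L^2=W\oplus Z$ is a topological direct sum, and $W\cap Z=\{0\}$.

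Both pieces are invariant under $\cA_0$:
\begin{itemize}
\item[(i)] $Z$ is invariant: $\cL_0$ is self-adjoint with $\cL_0\partial_x\theta_0=0$, hence $\langle \cL_0u,\partial_x\theta_0\rangle_{L^2}=0$.
\item[(ii)] $W$ is invariant: $\cA_0\Theta_0=0$ and $\cA_0E=(\partial_x\theta_0,-\nu\partial_x\theta_0)=\Theta_0-\nu E$.
\end{itemize}
Consequently $e^{t\cA_0}$ leaves both $W$ and $Z$ invariant. On $Z$ it coincides with $\tcS(t)$ from Corollary \ref{lemma_A_generator}, by uniqueness of the generated semigroup. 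On $W$ it is the exponential of the matrix $\begin{pmatrix}0&1\\0&-\nu\end{pmatrix}$ in the basis $\{\Theta_0,E\}$.

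\emph{Step 2: spectrum of each block.} On $W$ the matrix has the distinct eigenvalues $0$ and $-\nu$. Hence $\cM_0|_W$ is diagonalizable with eigenvalues $1$ and $e^{-\nu T}\neq 1$, and the eigenvalue $1$ has eigenvector $\Theta_0$.

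On $Z$, Remark \ref{remark_hypocoercivity_S_norm} gives $\|\tcS(nT)\|_{Z\to Z}\lesssim e^{-CnT}$. The $Z$-norm is equivalent to the $H^1\times L^2$ norm by Lemma \ref{lemma_X_hilbert}. The spectral radius formula therefore yields $r(\cM_0|_Z)\le e^{-CT}<1$.

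Since the decomposition is topological and invariant, $\sigma(\cM_0)=\sigma(\cM_0|_W)\cup\sigma(\cM_0|_Z)$. Hence
\[
\sigma(\cM_0)\subset\{1,e^{-\nu T}\}\cup\{|\mu|\le e^{-CT}\},
\]
so $\mu=1$ is isolated.

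\emph{Step 3: simplicity.} Let $P$ be the bounded projection onto $\mathrm{span}\{\Theta_0\}$ along $\mathrm{span}\{E-\nu^{-1}\Theta_0\}\oplus Z$; note that $E-\nu^{-1}\Theta_0$ is the $-\nu$ eigenvector. This projection commutes with $\cM_0$.

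The operator $\cM_0-I$ is invertible on $Z$, because $1\notin\sigma(\cM_0|_Z)$. It is also invertible on the $e^{-\nu T}$-eigenline. Therefore $\ker(\cM_0-I)^k=\mathrm{span}\{\Theta_0\}$ for every $k\ge1$. Equivalently, the Riesz projection of $\cM_0$ at $1$ equals $P$ and has rank one, which is exactly algebraic simplicity.

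The main obstacle is Step 1, specifically justifying that the restriction of $e^{t\cA_0}$ to $Z$ is $\tcS(t)$ and that the splitting is topological rather than merely algebraic. I would handle this by noting that $\cA_0|_Z=\cA_\perp$ on $D(\cA_\perp)$, and that both semigroups solve the same abstract Cauchy problem in $Z$ with the equivalent norm, so they agree by uniqueness of solutions.
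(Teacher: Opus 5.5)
Your proof is correct, but it takes a genuinely different route from the paper. The paper argues at the level of the generator. It imports from \cite{CMMP24,CMMP25} that $0$ is a simple isolated eigenvalue of $\cA_0$ with the gap \eqref{spectA0}. It transfers simplicity to $\cM_0$ by a Jordan-chain correspondence between $\cA_0$ and $e^{T\cA_0}$. It gets isolation from the uniform resolvent bound for $\cA_\perp$ in \cite{CMMP24} combined with the Gearhart--Pr\"uss theorem, which yields the spectral mapping identity $\sigma(e^{T\cA_0})\setminus\{0\}=e^{T\sigma(\cA_0)}$.

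You work directly with the semigroup. Your explicit two-dimensional invariant block $W$ is needed because $Z=H^1_\perp\times L^2_\perp$ has codimension two in $H^1\times L^2$, not one. Together with the energy decay of Lemma \ref{exponential_decay} and Gelfand's formula, it gives $\sigma(\cM_0)\subset\{1,e^{-\nu T}\}\cup\{|\mu|\le e^{-CT}\}$ with no spectral mapping theorem and no external resolvent estimate. Algebraic simplicity then reduces to a $2\times 2$ computation plus invertibility of $\cM_0-I$ on $Z$. This sidesteps the infinite-dimensional Jordan-chain correspondence the paper invokes.

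Your route buys self-containedness within this paper and an explicit picture of the spectrum, including the eigenvalue $e^{-\nu T}$. That eigenvalue comes from the eigenvector $(\partial_x\theta_0,-\nu\,\partial_x\theta_0)$ of $\cA_0$ for the eigenvalue $-\nu$. The paper's route buys the enclosing radius $e^{-\zeta_0(\nu)T}$ dictated by the spectral gap \eqref{spectA0}, whereas your energy-method rate $C$ may be suboptimal; this is immaterial for the lemma.

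You are also right to use the convention $\cA_0(u,v)=(v,-\cL_0u-\nu v)$. With the $+\nu I$ printed in \eqref{defA0}, your block $W$ would produce the eigenvalue $e^{\nu T}>1$, contradicting Proposition \ref{propA0}.
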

\begin{proof}
From Proposition \ref{propA0} (a) above (see also Lemma 5.5 in Capella \emph{et al.} \cite{CMMP24}) we know that
\[
\Theta_0 = (\partial_x \theta_0, 0) \in \ker \cA_0 \subset D_0,
\]
and that $\lambda = 0$ is a simple eigenvalue of $\cA_0$. Moreover, $\lambda = 0$ is isolated as an eigenvalue of $\cA_0$ inasmuch as
\begin{equation}
\label{sigmaA0}
\sigma(\cA_0) \subset \{ 0 \} \cup \{ z \in \C \, : \, \Re z \leq \zeta_0(\nu) < 0 \},
\end{equation}
where $\zeta_0(\nu) > 0$ depends only on the physical parameter $\nu > 0$ (see Theorem 5.1 in \cite{CMMP24}). This implies that $\mu = 1 \in \sigma(\cM_0)$ and that it is a simple isolated eigenvalue. Indeed, from standard semigroup theory it is known that $e^{t \cA_0} \Theta_0$ is a solution to
\[
\frac{d}{dt} \Big( e^{t \cA_0} \Theta_0 \Big) = \cA_0 e^{t \cA_0} \Theta_0 = e^{t \cA_0} \cA_0 \Theta_0 = 0.
\]
Thus, $e^{t \cA_0} \Theta_0 = \Theta_0$ for all $t \geq 0$. In particular, for $t = T$ one has
\[
\cM_0 \Theta_0 = e^{T \cA_0} \Theta_0 = \Theta_0,
\]
yielding $\mu = 1 \in \ptsp(\cM_0)$. That $\mu = 1$ is a simple eigenvalue follows from the simplicity of $\lambda = 0$ as an eigenvalue of $\cA_0$: any non-trivial Jordan chain for $\mu = 1 \in \ptsp(\cM_0)$ would imply the existence of a non-trivial Jordan chain for $\lambda = 0 \in \ptsp(\cA_0)$, which is impossible (recall that the generalized invariant spaces of $\cA_0$ are invariant under $e^{t \cA_0}$ and vice versa). Finally, \eqref{sigmaA0} readily implies that 
\[
\sigma(\cM_0) \subset \{ 1 \} \cup B_{r_0}(0),
\]
where $B_{r_0}(0) = \{ z \in \C \, : \, |z| < r_0 < 1\}$, and $r_0$ is any radius such that $0 <\exp(- \zeta_0(\nu)T) < r_0 < 1$. This is a consequence of ${\cA_\perp} = {\cA_0}_{|H^1 \times L^2_\perp}$, the restriction of $\cA_0$ to $\text{span} \{ \Theta_0 \}^{\perp}$ in $H^1 \times L^2$, satisfying a uniform resolvent estimate of the form 
\[
\sup_{\Re z > 0} \| (z - {\cA_\perp} )^{-1} \| < \infty
\]
(see estimate (6.15) in \cite{CMMP24}) and, upon application of the Gearhart-Pr\"uss theorem (cf. \cite{CrL03,Gea78,Pr84}), the spectral mapping theorem then holds for the operator $\cA_0$, namely, $\sigma(e^{t \cA_0}) \backslash \{0\} = e^{t \sigma(\cA_0)}$ for all $t \geq 0$ (see Engel and Nagel \cite{EN06}, Corollary 2.10, p. 183). Therefore, $\lambda \in \sigma(\cA_0)$ if and only if $\mu = e^{\lambda T} \in \sigma(\cM_0)$ with $|\mu| =  \exp((\Re \lambda) T) |\exp(i (\Im \lambda) T)| \leq \exp(-\zeta_0(\nu)T) < 1$. Hence, $\mu = 1$ is also isolated from the rest of the spectrum as an eigenvalue of $\cM_0$. This proves the lemma.
\end{proof}

\begin{corollary}
\label{corGammas}
We can choose radii $r_1 > 0$ and $0 < r_0 < 1$ such that the circles
\[
\begin{aligned}
\gamma_0 &:= \{ z \in \C \, : \, |z| = r_0 \} = \partial B_{r_0}(0), \\
\gamma_1 &:= \{ z \in \C \, : \, |z-1| = r_1 \} = \partial B_{r_1}(1),
\end{aligned}
\]
satisfy $\gamma_1, \gamma_0 \subset \rho(\cM_0)$ and 
\[
\begin{aligned}
\sigma(\cM_0) \cap B_{r_1}(1) &= \{1\}, \\
\sigma(\cM_0) \backslash \{1\} &\subset B_{r_0}(0).
\end{aligned}
\]
Moreover, the Riesz projector
\begin{equation}
\label{RieszP1M0}
\left \{
\begin{aligned}
\cP_1 &:= \frac{1}{2 \pi i} \int_{\gamma_1} (z - \cM_0)^{-1} \, dz,\\
\cP_1 &: H^1 \times L^2 \to H^2 \times H^1,
\end{aligned}
\right.
\end{equation}
has $\rank(\cP_1) = \dim \ran (\cP_1) = 1$.
\end{corollary}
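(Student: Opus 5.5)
The corollary follows from Lemma \ref{lemsimpleM0}, which already gives the spectral picture $\sigma(\cM_0) \subset \{1\} \cup B_{\tilde r}(0)$ for some $\tilde r<1$, with $\mu=1$ a simple isolated eigenvalue. What remains is to choose the radii, identify the range of $\cP_1$, and check the regularity of that range.

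\textbf{Choice of radii.} Since $\exp(-\zeta_0(\nu)T)<1$, fix $r_0$ with $\exp(-\zeta_0(\nu)T)<r_0<1$. By the spectral mapping argument at the end of the proof of Lemma \ref{lemsimpleM0}, every point of $\sigma(\cM_0)\setminus\{1\}$ has modulus at most $\exp(-\zeta_0(\nu)T)$. The point $0$ may belong to $\sigma(\cM_0)$, but it lies inside $B_{r_0}(0)$. Hence $\sigma(\cM_0)\setminus\{1\}\subset B_{r_0}(0)$ and $\gamma_0\cap\sigma(\cM_0)=\emptyset$. Next take $0<r_1<1-r_0$. Then $\overline{B_{r_1}(1)}$ is disjoint from $\overline{B_{r_0}(0)}$, so $\sigma(\cM_0)\cap B_{r_1}(1)=\{1\}$ and $\gamma_1\subset\rho(\cM_0)$. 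Both circles are therefore in the resolvent set and separate $\{1\}$ from the rest of the spectrum.

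\textbf{Rank of $\cP_1$.} The Riesz projector $\cP_1$ is a bounded projection onto the spectral subspace of $\cM_0$ associated with $\{1\}$. Lemma \ref{lemsimpleM0} says $\mu=1$ is simple, meaning algebraic multiplicity one. This is the point to handle with care. The lemma rules out Jordan chains, since these would transfer to $\cA_0$. One must also ensure that no other eigenvalue $\lambda\in\sigma(\cA_0)$ satisfies $e^{\lambda T}=1$. This holds because the only such candidates are $\lambda = 2\pi i k/T$, which are purely imaginary, and $\sigma(\cA_0)\cap i\R=\{0\}$ by Proposition \ref{propA0}(b). Therefore $\ran\cP_1=\ker(\cM_0-I)=\operatorname{span}\{\Theta_0\}$, and $\rank\cP_1=1$. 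If needed, finite-dimensionality of $\ran\cP_1$ can be cross-checked through the spectral decomposition of $\cA_0$: the Riesz projector of $\cA_0$ at $0$ commutes with $e^{T\cA_0}$ and has rank one by Proposition \ref{propA0}(a).

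\textbf{Mapping property $\cP_1: H^1\times L^2\to H^2\times H^1$.} Since $\ran\cP_1=\operatorname{span}\{\Theta_0\}$ and $\Theta_0=(\partial_x\theta_0,0)\in H^2\times H^1$ by Proposition \ref{propspecL0}(c) and \eqref{defThet0}, $\cP_1$ takes values in $D_0$. It is bounded into $D_0$ because it is a rank-one operator $\cP_1 U=\ell(U)\Theta_0$ with $\ell$ a bounded functional on $H^1\times L^2$.

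The main obstacle is the multiplicity step, namely excluding additional eigenvalues or Jordan blocks of $\cM_0$ at $1$. This rests entirely on the simplicity and isolation of $0\in\sigma(\cA_0)$ together with the spectral mapping theorem quoted in Lemma \ref{lemsimpleM0}.
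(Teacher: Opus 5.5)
Your proposal is correct and follows essentially the paper's own proof. You make the same choice of radii, $\exp(-\zeta_0(\nu)T) < r_0 < 1$ and $0 < r_1 < 1 - r_0$, and you identify $\rank(\cP_1)$ with the algebraic multiplicity of the simple eigenvalue $\mu = 1$ from Lemma \ref{lemsimpleM0}. Your extra steps fill in details the paper leaves implicit: excluding $\lambda = 2\pi i k/T \neq 0$ via Proposition \ref{propA0}(b), using the Riesz projector of $\cA_0$ at $0$ as a cross-check, and deducing the mapping property into $H^2 \times H^1$ from $\ran \cP_1 = \operatorname{span}\{\Theta_0\}$.
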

\begin{proof}
From the proof of Lemma \ref{lemsimpleM0} we know that
\[
0 < \eta := \sup \big\{ |\lambda| \, : \, \lambda \in \sigma(\cM_0) \backslash \{1 \} \big\} \leq e^{ - \zeta_0(\nu) T} < 1.
\]
Therefore, we can choose any positive radius $r_0$ such that $0 < \eta < r_0 < 1$ and any other radius $r_1$ with $0 < r_1 < 1 - r_0$, so that the circle $\gamma_0$ is contained in $\rho(\cM_0)$ and $\sigma(\cM_0) \backslash \{ 1 \} \subset B_{r_0}(0) = \{ z \in \C \, : \, |z| < r_0 \}$. Moreover, the circle $\gamma_1$ contains the isolated eigenvalue $\mu = 1$ of $\cM_0$ in its interior and no other element of $\sigma(\cM_0)$, $\gamma_1 \in \rho(\cM_0)$, and $\gamma_1$ and $\gamma_0$ are completely disjoint; see Figure \ref{fig2} below. Hence, we can define the Riesz operator in \eqref{RieszP1M0} and
\[
\mathrm{Ran}(\cP_1) = \bigcup_{k \geq 1} \ker \big( (\cM_0 - I )^k \big),
\]
has finite dimension, equal to the algebraic multiplicity of $\mu = 1$ as an eigenvalue of $\cM_0$. Since $\mu = 1$ is simple, we obtain $\mathrm{rank}(\cP_1) = 1$.
\end{proof}

\begin{remark}
From the proof of Corollary \ref{corGammas} above, we also have that $\gamma_0 \in \rho(\cM_0)$ and, likewise, we can define the Riesz projector
\begin{equation}
\label{RieszP0M0}
\left \{
\begin{aligned}
\cP_0 &:= \frac{1}{2 \pi i} \int_{\gamma_0} (z - \cM_0)^{-1} \, dz,\\
\cP_0 &: H^1 \times L^2 \to H^2 \times H^1.
\end{aligned}
\right.
\end{equation}
Note that $\gamma_0$ contains the rest of the spectrum of $\cM_0$ in its interior.
\end{remark}

\subsection{Proof of spectral stability}

We now prove the persistence of the spectral partition of the monodromy map for $\vep$ sufficiently small, yielding spectral stability. The following result is the main part of the proof.

\begin{lemma}
There exists $0 < \vep_1 < \vep_0$, sufficiently small, such that $\mu = 1 \in \sigma(\cM_{\vep})$ with algebraic multiplicity equal to one, for all $\vep \in (-\vep_1, \vep_1)$.
\end{lemma}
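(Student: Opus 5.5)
The plan is to combine two facts: the eigenvalue $\mu = 1$ is always present (by translation invariance), and the spectral projector near $\mu = 1$ depends continuously on $\vep$ (by perturbation theory).

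\emph{Step 1: $\mu = 1$ is an eigenvalue.} By Corollary \ref{corpropsol}, $\cM_\vep \overline{\Theta}(\cdot,0) = \overline{\Theta}(\cdot,0)$. It remains to check that $\overline{\Theta}(\cdot,0) \neq 0$. Since $\brt = \theta_0(\cdot + X(t)) + \chi$ with $\|\chi\|_{D(\cT_0)} = O(\vep)$ and $\|X\|_{C^2} = O(\vep)$, we have
\[
\overline{\Theta}(\cdot,0) = \Theta_0 + O(|\vep|)
\]
in $H^1 \times L^2$. In particular it is nonzero for $|\vep|$ small, so $\mu = 1 \in \ptsp(\cM_\vep)$.

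\emph{Step 2: continuity of the monodromy map.} I will show $\|\cM_\vep - \cM_0\|_{H^1\times L^2 \to H^1 \times L^2} \lesssim |\vep|$. Write $\cA(t) = \cA_0 + \cB_\vep(t)$ with $\|\cB_\vep(t)\| \leq C|\vep|$ (Corollary \ref{corperA}). For $V \in D_0$, differentiate $s \mapsto e^{(T-s)\cA_0}\cU(s,0)V$ using (E$_2$)–(E$_5$) of Theorem \ref{thmexistpropag}. This gives the Duhamel identity
\[
\cU(T,0)V - e^{T\cA_0}V = \int_0^T e^{(T-s)\cA_0}\cB_\vep(s)\,\cU(s,0)V\,ds .
\]
Together with the exponential bounds of Proposition \ref{propA0}(c) and (E$_1$), this yields $\|\cM_\vep V - \cM_0 V\| \leq C T e^{2|\omega|T + C|\vep|T}|\vep|\,\|V\|$. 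Density of $D_0$ extends the bound to all of $H^1\times L^2$. The main technical point is justifying the differentiation. Strong continuity of $s\mapsto \cB_\vep(s)$ follows from the continuity of $\brt$ in $C^0_tH^2_x$, and only integrated (a.e.) differentiability is needed.

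\emph{Step 3: upper semicontinuity of the spectrum and the rank of the projector.} Since $\gamma_1 \subset \rho(\cM_0)$ is compact, $\sup_{z\in\gamma_1}\|(z-\cM_0)^{-1}\| =: K < \infty$. By a Neumann series, $\gamma_1 \subset \rho(\cM_\vep)$ whenever $\|\cM_\vep - \cM_0\| < 1/K$, which is where $\vep_1$ gets fixed. Moreover,
\[
\sup_{z\in\gamma_1}\|(z-\cM_\vep)^{-1} - (z-\cM_0)^{-1}\| \lesssim |\vep| .
\]
Define $\cP_1^\vep := \frac{1}{2\pi i}\int_{\gamma_1}(z-\cM_\vep)^{-1}\,dz$. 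Then $\|\cP_1^\vep - \cP_1\| < 1$ for $|\vep|<\vep_1$, and the standard lemma for projections at distance less than one (Kato) gives $\rank \cP_1^\vep = \rank \cP_1 = 1$ by Corollary \ref{corGammas}.

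\emph{Step 4: conclusion.} Since $1 \in B_{r_1}(1)$ is an eigenvalue of $\cM_\vep$ by Step 1, the range of $\cP_1^\vep$ contains its generalized eigenspace. That range is one-dimensional, so $\sigma(\cM_\vep)\cap B_{r_1}(1) = \{1\}$ and the algebraic multiplicity of $\mu = 1$ equals one.

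I expect the only delicate step to be the rigorous Duhamel estimate in Step 2. Everything else is standard analytic perturbation theory for isolated eigenvalues.
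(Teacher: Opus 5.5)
Your proposal is correct and follows essentially the same route as the paper. The paper likewise uses a Duhamel identity to get $\|\cM_\vep-\cM_0\| = O(|\vep|)$, then a Neumann series to place $\gamma_1$ in $\rho(\cM_\vep)$ with $\cP_1(\vep)\to\cP_1$ so the rank stays one, and finally the translation mode $\overline{\Theta}(\cdot,0)$ of Corollary \ref{corpropsol} to force the enclosed eigenvalue to be $1$; your differences are minor: you differentiate $s\mapsto e^{(T-s)\cA_0}\cU(s,0)V$ instead of $s\mapsto \cU(T,s)e^{s\cA_0}$, you verify $\overline{\Theta}(\cdot,0)\neq 0$ explicitly, and you use only continuity plus Kato's lemma on nearby projections rather than the paper's unneeded analyticity claims.
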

\begin{proof}
Let $\vep \in (-\vep_0, \vep_0)$ and consider the propagator $\cU$ from Theorem \ref{thmexistpropag}. By elementary properties of the propagator (see \hyperref[E3]{\rm{(E$_3$)}} in Theorem \ref{thmexistpropag}) and of the generator $\cA_0$ we have
\[
\begin{aligned}
\frac{\partial}{\partial s} \big( \cU(t,s) e^{s \cA_0}\big) &= - \cU(t,s) \cA(s) e^{s \cA_0} + \cU(t,s) \cA_0 e^{s \cA_0}\\
&= \cU(t,s) \big( \cA_0 - \cA(s) \big) e^{s \cA_0},
\end{aligned}
\]
for all $0 \leq s \leq t \leq T$. Evaluating in $t = T$ and integrating in $s \in (0,T)$ we arrive at
\[
\begin{aligned}
\cU(T,T) e^{T \cA_0} - \cU(T,0) &= - \int_0^T \cU(T,s) \big( \cA_0 - \cA(s) \big) e^{s \cA_0} \, ds \\
&= - \int_0^T \cU(T,s) \cB_\vep (s) e^{-s \cA_0} \, ds,
\end{aligned}
\]
where the family $\{ \cB_\vep(t) \}_{t \in [0,T]}$ is defined in \eqref{defcB}. Since $\cU(T,T) = I$, as a result we have found the expression for the difference of the monodromy operators,
\begin{equation}
\label{difMs}
\cM_\vep - \cM_0 = \int_0^T \cU(T,s) \cB_\vep(s) e^{s \cA_0} \, ds.
\end{equation}
Upon estimation of its operator norm we find that
\[
\| \cM_\vep - \cM_0 \| \leq \int_0^T \| \cU(T,s) \| \| \cB_\vep(s) \| \| e^{s \cA_0} \| \, ds \leq C | \vep| e^{\omega T} \int_0^T e^{(\omega + C|\vep|)(s-T)} \, ds,
\]
in view of \hyperref[E1]{\rm{(E$_1$)}}, the uniform bound $\| \cB_\vep (s) \| \leq C |\vep|$ and the quasicontractivity of the semigroup $e^{t \cA_0}$. We claim that
\[
\int_0^T e^{(\omega + C|\vep|)(s-T)} \, ds \leq C_T + O((|\omega|+|\vep|)T^2),
\]
for some positive constant $C_T = O(T) > 0$. Indeed, in the case when $\omega \geq - C|\vep|$ we clearly have
\[
\int_0^T e^{(\omega + C|\vep|)(s-T)} \, ds \leq T.
\]
In the case when $\varrho := - (\omega + C|\vep|) > 0$ there holds
\[
\int_0^T e^{(\omega + C|\vep|)(s-T)} \, ds = (\omega + C|\vep|)^{-1} \big( 1- e^{-(\omega + C|\vep|)T}\big) = \varrho^{-1}(e^{\varrho T}-1) = T +  O(\varrho T^2).
\]
Hence we arrive at the estimate,
\begin{equation}
\label{boundMM0}
\| \cM_\vep - \cM_0 \| \leq \widetilde{C}|\vep| + O(|\vep|^2 ),
\end{equation}
for some constant $\widetilde{C} = \widetilde{C}(T,\omega) > 0$. Let us now define the Riesz projector,
\begin{equation}
\label{defP1vep}
\left \{
\begin{aligned}
\cP_1(\vep) &:= \frac{1}{2 \pi i} \int_{\gamma_1} (z - \cM_\vep)^{-1} \, dz,\\
\cP_1(\vep) &: H^1 \times L^2 \to H^2 \times H^1,
\end{aligned}
\right.
\end{equation}
where the integration is performed over the same circle $\gamma_1$ as in \eqref{RieszP1M0}. We claim that for $0< \vep \ll 1$ sufficiently small, $\gamma_1 \subset \rho(\cM_\vep)$ and $\cP_1(\vep)$ is analytic in $\vep$.

For that purpose we use the Neumann expansion
\[
\begin{aligned}
(z - \cM_\vep)^{-1} &= \big(z - \cM_0 + (\cM_0 - \cM_\vep) \big)^{-1} \\
&= \Big( \sum_{n=0}^\infty (\cM_0 - \cM_\vep)^n (z - \cM_0 )^{-n} \Big) (z - \cM_0)^{-1} ,
\end{aligned}
\]
for $z \in \gamma_1$. The series converges uniformly for $\| \cM_0 - \cM_\vep \| \| (z- \cM_0)^{-1}\| < 1$. Since $\| (z- \cM_0)^{-1}\|$ is bounded for $z \in \gamma_1$, then there exists $0 < \vep_1 < \vep_0$ sufficiently small such that if $|\vep| < \vep_1$ then 
\[
\| \cM_0 - \cM_\vep \| \| (z- \cM_0)^{-1}\| \leq C(\widetilde{C}|\vep| + O(|\vep|^2 )) < 1.
\]
Thus, the series converges uniformly, $\cP_1(\vep)$ is analytic in $\vep$ and $\gamma_1 \subset \rho(\cM_\vep)$. Moreover, $\cP_1(\vep)$ is a Riesz operator such that
\[
\| \cP_1(\vep) - \cP_1 \| \leq C \sup_{z \in \gamma_1} \| (z - \cM_\vep)^{-1} - (z - \cM_0)^{-1} \| \to 0,
\]
as $\vep \to 0$ by continuity of the resolvent. Hence, for $|\vep|$ sufficiently small, $\mathrm{rank}(\cP_1(\vep)) = \mathrm{rank}(\cP_1)$. 

We conclude that for $|\vep|$ small enough, there exists only one simple eigenvalue $\mu(\vep)$ of $\cM_\vep$ inside $\gamma_1$. By analyticity of $\cP_1(\vep)$, this eigenvalue is also analytic in $\vep$ (see Theorem VII.10.8 in Kato \cite{Kat80}) and it is associated to a certain eigenfunction, say $\overline{\Theta}(\vep) \in \text{Ran}(\cP_1(\vep)) \subset H^2 \times H^1$. However, from Corollary \ref{corpropsol} we already know that $1 \in \sigma(\cU(T,0)) = \sigma(\cM_\vep)$ with eigenfunction $\overline{\Theta} (\cdot,0) \in H^2 \times H^1$. This implies that $\mu(\vep) \equiv 1$ for all $0 < |\vep| < \vep_1$ sufficiently small and that this eigenvalue is simple.  
\end{proof}

\begin{remark}
By the same arguments,
\[
\cP_0(\vep) := \frac{1}{2 \pi i} \int_{\gamma_0} (z - \cM_\vep)^{-1} \, dz,
\]
is analytic for $|\vep|$ sufficiently small and $\text{rank}(\cP_0(\vep)) = \text{rank}(\cP_0)$, so that $\gamma_0$ encloses $\sigma(\cM_\vep) \backslash \{ 1 \}$ (see Figure \ref{fig2}).
\end{remark}

\begin{figure}[t]
\begin{center}
\includegraphics[scale=0.35]{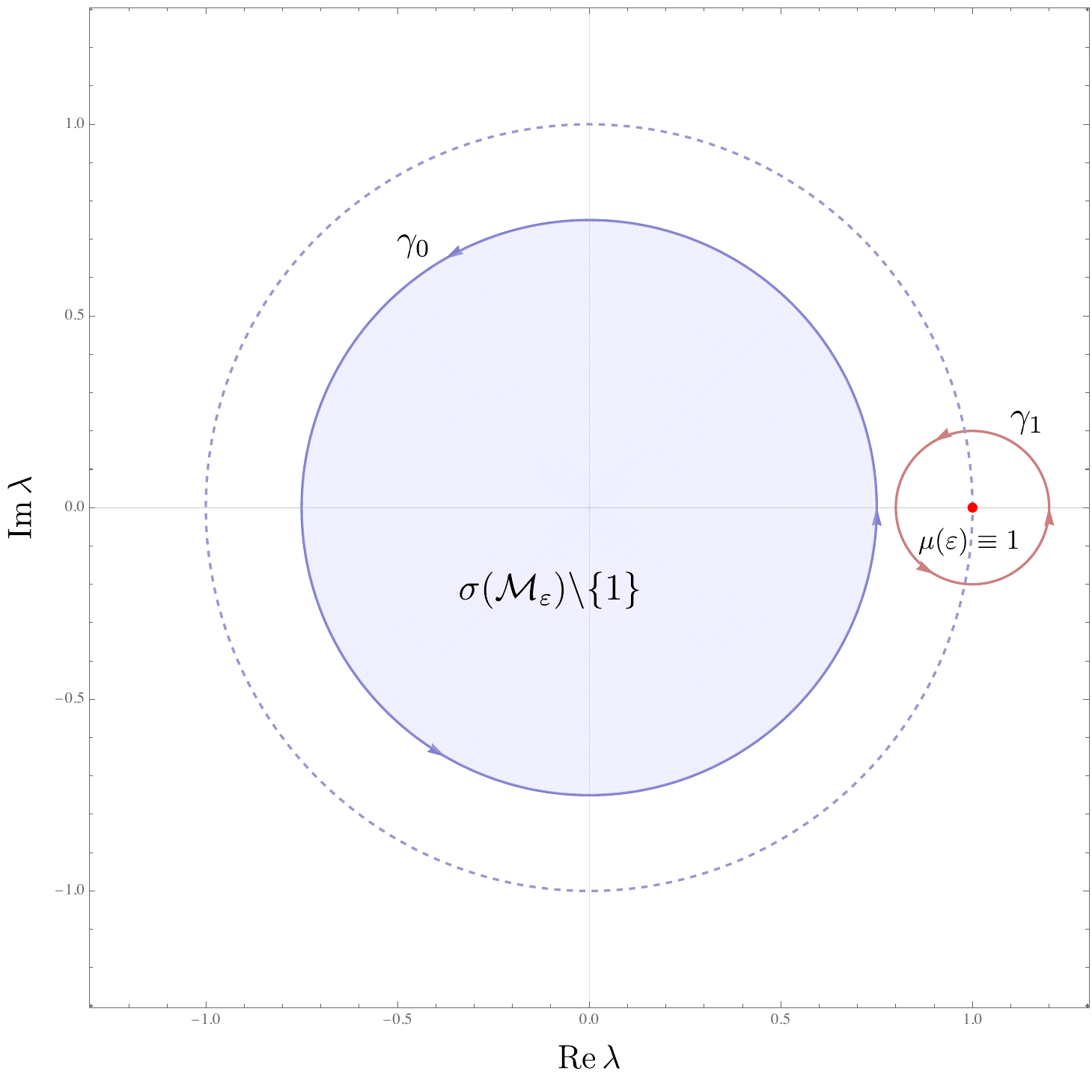}
\caption{\small{Contours $\gamma_0$ (blue) and $\gamma_1$ (red) in the complex plane, encircling $\sigma(\cM_0)\backslash \{ 1 \}$ and $\{1\}$, respectively. The unit circle is represented by a dotted line.  By taking $0 < \vep \ll 1$ sufficiently small, the interiors of these contours also contain $\sigma(\cM_\vep)\backslash \{ 1 \}$ and $\mu(\vep) = 1$, respectively. It is to be observed that $\mu(\vep) = 1$ persists as an eigenvalue of $\cM_\vep$ for $\vep$ sufficiently small (color online). }
}
\label{fig2}
\end{center}
\end{figure}

Consequently, we have the following result.
\begin{corollary}
There exists $0 < \vep_1 < \vep_0$ sufficiently small such that
\[
\begin{aligned}
\sigma(\cM_\vep) \cap B_{r_1}(1) &= \{1\}, \\
\sigma(\cM_\vep) \backslash \{1\} &\subset B_{r_0}(0),
\end{aligned}
\]
for all $0 < |\vep| < \vep_1$.
\end{corollary}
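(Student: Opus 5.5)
The plan is to combine the norm estimate \eqref{boundMM0}, $\|\cM_\vep - \cM_0\| \leq \widetilde{C}|\vep| + O(|\vep|^2)$, with upper semicontinuity of the spectrum of bounded operators, and then to identify the eigenvalue inside $\gamma_1$ using the preceding lemma.

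First, fix the radii $r_0, r_1$ and the circles $\gamma_0,\gamma_1$ from Corollary \ref{corGammas}. Consider the compact set
\[
K := \{ z \in \C : |z| \leq R\} \setminus \big(B_{r_0}(0) \cup B_{r_1}(1)\big), \qquad R := \|\cM_0\| + 1.
\]
Since $K \subset \rho(\cM_0)$ and the resolvent is continuous on $\rho(\cM_0)$, we have $M_K := \sup_{z\in K}\|(z-\cM_0)^{-1}\| < \infty$. Now choose $\vep_1$ so small that $\|\cM_\vep - \cM_0\|\, M_K < 1$ and $\|\cM_\vep - \cM_0\| < 1$ for $|\vep|<\vep_1$. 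For $z \in K$, the Neumann series argument used in the preceding lemma shows $z \in \rho(\cM_\vep)$. For $|z| > R$ we have $|z| > \|\cM_\vep\|$, since $\|\cM_\vep\| \le \|\cM_0\| + 1$, so such $z$ also lie in $\rho(\cM_\vep)$. Together these give
\[
\sigma(\cM_\vep) \subset B_{r_0}(0) \cup B_{r_1}(1).
\]
These two disks are disjoint because $r_1 < 1 - r_0$.

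Second, identify the spectrum inside $B_{r_1}(1)$. By the preceding lemma, after shrinking $\vep_1$ if necessary, $\cP_1(\vep)$ has rank one and $1 \in \sigma(\cM_\vep)$ is its simple eigenvalue. The spectrum of $\cM_\vep$ enclosed by $\gamma_1$ equals the spectrum of the restriction of $\cM_\vep$ to $\ran \cP_1(\vep)$, which is one-dimensional. That restriction therefore has exactly one eigenvalue, namely $\mu(\vep)=1$. Hence $\sigma(\cM_\vep)\cap B_{r_1}(1) = \{1\}$.

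Finally, disjointness of the two disks gives $\sigma(\cM_\vep)\setminus\{1\} \subset B_{r_0}(0)$, which is the second claim.

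The only delicate point is the uniformity of the resolvent bound on all of $K$, not just on the circles. This is handled by compactness, together with the a priori bound $\|\cM_\vep\| \leq \|\cM_0\| + 1$, which excludes spectrum at large $|z|$.
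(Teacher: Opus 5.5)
Your proof is correct, and it takes a somewhat different, more complete route than the paper. The paper gets the corollary by combining the preceding lemma with the remark that the Riesz projector $\cP_0(\vep)$ over $\gamma_0$ is analytic in $\vep$ with $\rank \cP_0(\vep) = \rank \cP_0$, ``so that $\gamma_0$ encloses $\sigma(\cM_\vep)\setminus\{1\}$''. In other words, it controls the resolvent only on the two circles and relies on persistence of the Riesz projectors.

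You instead run the Neumann series on the whole compact region $K$ outside the two disks. You then use the bound $\|\cM_\vep\| < \|\cM_0\|+1$ for large $|z|$. This is upper semicontinuity of the spectrum, and it directly confines $\sigma(\cM_\vep)$ to $B_{r_0}(0)\cup B_{r_1}(1)$. The rank-one projector from the lemma, together with $1\in\sigma(\cM_\vep)$, then pins down the part in $B_{r_1}(1)$.

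What your version buys is that it genuinely excludes spectrum escaping into the region between or outside the circles. Since $\cP_0$ has infinite rank, equality of ranks alone does not do this. The paper's argument would need an extra step, for instance that $\cP_0(\vep)+\cP_1(\vep)$ is a projection norm-close to $\cP_0+\cP_1 = I$, hence equal to $I$. The paper's projector approach, in turn, yields analytic dependence of the spectral subspaces on $\vep$, which is more than this corollary needs.
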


These observations yield the spectral stability property.
\begin{theorem}[spectral stability of the time-periodic N\'eel wall]
There exists $\vep_1 > 0$ sufficiently small such that for $0 < |\vep| < \vep_1$ there holds
\[
\Sigma_F = \{ \lambda \in \C \, : \, e^{T\lambda} \in \sigma(\cM_\vep) \} \subset \{ \lambda \in \C \, : \, \Re \lambda \leq 0 \}.
\]
\end{theorem}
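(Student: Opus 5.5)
The plan is to read the statement off from the preceding Corollary, which localizes $\sigma(\cM_\vep)$ for $0<|\vep|<\vep_1$, combined with the definition of the Floquet spectrum. I take $\vep_1$ to be the constant supplied by that Corollary, shrinking it if necessary so that the preceding Lemma also applies. Then for every $0<|\vep|<\vep_1$ we have
\[
\sigma(\cM_\vep) \subset \{1\} \cup B_{r_0}(0), \qquad 0<r_0<1 ,
\]
where $r_0$ is the radius fixed in Corollary \ref{corGammas}. Indeed, the Corollary gives $\sigma(\cM_\vep)\cap B_{r_1}(1)=\{1\}$ and $\sigma(\cM_\vep)\backslash\{1\}\subset B_{r_0}(0)$, and the preceding Lemma guarantees that $\mu=1$ genuinely belongs to $\sigma(\cM_\vep)$ as a simple eigenvalue.

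Next let $\lambda\in\Sigma_F$, so that $\mu=e^{T\lambda}\in\sigma(\cM_\vep)$. Since $|e^{T\lambda}|=e^{T\Re\lambda}$, there are two cases. If $\mu=1$, then $e^{T\Re\lambda}=1$, so $\Re\lambda=0$; these are the Floquet exponents $\lambda\in \frac{2\pi i}{T}\Z$ generated by translation invariance, that is, by the eigenfunction $\overline{\Theta}(\cdot,0)$ of Corollary \ref{corpropsol}. Otherwise $\mu\in B_{r_0}(0)$, so $e^{T\Re\lambda}<r_0<1$ and hence $\Re\lambda<T^{-1}\log r_0<0$. In both cases $\Re\lambda\le 0$, which is the claimed inclusion. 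We also note that $0\notin\sigma$ is not required, since $e^{T\lambda}\neq 0$ anyway.

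The whole argument rests on the spectral localization of $\cM_\vep$, so that is the one step that deserves scrutiny. That localization comes from upper semicontinuity of the spectrum under the small perturbation \eqref{boundMM0}. The point to check is that no spectrum of $\cM_\vep$ can appear outside $B_{r_0}(0)\cup B_{r_1}(1)$. For this I would use the standard fact that $(z-\cM_\vep)^{-1}$ exists whenever $\|\cM_\vep-\cM_0\|\,\|(z-\cM_0)^{-1}\|<1$. One then needs $\sup_z\|(z-\cM_0)^{-1}\|$ to be bounded on the closed set $\{|z|\ge r_0,\ |z-1|\ge r_1\}$. On the compact part $|z|\le 2\|\cM_0\|$ this follows from continuity of the resolvent on a set contained in $\rho(\cM_0)$. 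For $|z|\ge 2\|\cM_0\|$ it follows from the Neumann bound $\|(z-\cM_0)^{-1}\|\le 1/(|z|-\|\cM_0\|)$. With this uniform bound in hand, choosing $\vep_1$ small via \eqref{boundMM0} finishes the proof.
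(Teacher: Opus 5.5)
Your argument is correct and is the same as the paper's. You read off $\sigma(\cM_\vep)\subset\{1\}\cup B_{r_0}(0)$ from the preceding Corollary and use $|e^{T\lambda}|=e^{T\Re\lambda}$ to split into the cases $e^{T\lambda}=1$ and $|e^{T\lambda}|<r_0<1$. Your extra upper-semicontinuity check is also sound: it uses a uniform bound on $(z-\cM_0)^{-1}$ over $\{|z|\ge r_0,\ |z-1|\ge r_1\}$ together with \eqref{boundMM0}. It is in fact a more careful justification of that localization than the paper's remark equating the ranks of $\cP_0(\vep)$ and $\cP_0$, which are both infinite and so do not by themselves exclude spectrum outside the two contours.
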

\begin{proof}
From the previous Corollary, $\sigma(\cM_\vep) \backslash \{1\} \subset B_{r_0}(0)$ with $0 < r_0 < 1$. Therefore, $e^{T\lambda} \in \sigma(\cM_\vep)$ implies that either $e^{T\lambda} = 1$ or $e^{T\lambda} \in B_{r_0}(0)$ with $r_0 < 1$, yielding $\Re \lambda \leq 0$, as claimed.
\end{proof}
%
%
%
%

%
%
%
%
%
%
%
%
%
%
%
%

\section*{Acknowledgements}

L. Morales and R. G. Plaza acknowledge the hospitality of the Department of Mathematics at RWTH Aachen University during research visits in September and October 2025, respectively, when this work was partially carried out.


\section*{Summary Statement}
\subsection*{Funding declarations}
V. Linse and C. Melcher  were supported by the Deutsche Forschungsgemeinschaft (DFG, German Research Foundation) - Project number 442047500 through the Collaborative Research Center “Sparsity and Singular Structures” (SFB 1481). The work of A. Capella and R. G. Plaza was fully supported by SECIHTI, M\'exico, grant CF-2023-G-122. The work of L. Morales was supported by SECIHTI, M\'exico, through the Program ``Estancias Posdoctorales por M\'exico 2022''. 


\subsection*{Conflict of interest} The authors declare no conflict of interest.
\subsection*{Author contributions} The authors contributed to and reviewed all article sections equally.
\subsection*{Ethics declaration} Not applicable.
\subsection*{Data Availability} No datasets were generated or analyzed during the current study.




%

\def\cprime{$'\!\!$}

\end{document}